\newcommand{\N}{\mathbb{N}}
\newcommand{\Z}{\mathbb{Z}}
\newcommand{\R}{\mathbb{R}}
\newcommand{\C}{\mathbb{C}}
\newcommand{\Ga}{\Gamma}
\newcommand{\gi}{\text{girth}}
\newcommand{\wgn}{\widetilde{G_n}}
\newcommand{\Hi}{\mathcal{H}_0}
\theoremstyle{definition} \newtheorem{sg}{Definition}[section]
\theoremstyle{definition} \newtheorem{exdef}[sg]{Definition}
\theoremstyle{definition} \newtheorem{mex}[sg]{Definition / Theorem}
\theoremstyle{definition} \newtheorem{gidef}[sg]{Definition}
\theoremstyle{plain} \newtheorem{girthmain}[sg]{Theorem}
\theoremstyle{plain} \newtheorem{unithe}[sg]{Theorem}
\theoremstyle{plain} \newtheorem{gmcor}[sg]{Corollary}
\theoremstyle{definition} \newtheorem{cc}{Definition}[section]
\theoremstyle{definition} 
\theoremstyle{definition} 
\theoremstyle{definition} \newtheorem{cf}[cc]{Definition}
\theoremstyle{remark} \newtheorem{cfex}[cc]{Examples}
\theoremstyle{definition} \newtheorem{udbg}{Definition}[section]
\theoremstyle{definition} \newtheorem{roe}[udbg]{Definition}
\theoremstyle{remark} \newtheorem{rar}[udbg]{Remark}
\theoremstyle{definition} \newtheorem{ce}[udbg]{Definition}
\theoremstyle{plain} \newtheorem{celem}[udbg]{Lemma}
\theoremstyle{definition} \newtheorem{equiroe}[udbg]{Definition}
\theoremstyle{plain} \newtheorem{morlem}[udbg]{Lemma}
\theoremstyle{plain} \newtheorem{philem}[udbg]{Lemma}
\theoremstyle{plain} \newtheorem{maxcor}[udbg]{Corollary}
\theoremstyle{definition} \newtheorem{onl}[udbg]{Definition}
\theoremstyle{plain} \newtheorem{treelem}[udbg]{Lemma}
\theoremstyle{plain} \newtheorem{onllem}[udbg]{Lemma}
\theoremstyle{definition} \newtheorem{ghostdef}{Definition}[section]
\theoremstyle{remark} \newtheorem{gbgrem}[ghostdef]{Remark}
\theoremstyle{remark} \newtheorem{ghostex}[ghostdef]{Examples}
\theoremstyle{remark} \newtheorem{ghostprob}[ghostdef]{Problem}
\theoremstyle{plain} \newtheorem{ghostlem}[ghostdef]{Lemma}
\theoremstyle{plain} \newtheorem{kazdies}[ghostdef]{Lemma}
\theoremstyle{definition} \newtheorem{k0hom}{Definition}[section]
\theoremstyle{definition} \newtheorem{k0a}[k0hom]{Definition}
\theoremstyle{definition} \newtheorem{rips}[k0hom]{Definition}
\theoremstyle{plain} \newtheorem{indlem}[k0hom]{Lemma}
\theoremstyle{plain} \newtheorem{bclem}[k0hom]{Lemma}
\theoremstyle{plain} \newtheorem{negthe}{Theorem}[section]
\theoremstyle{plain} \newtheorem{negcor}[negthe]{Corollary}
\theoremstyle{remark} \newtheorem{taupprob}[negthe]{Problem}
\theoremstyle{plain} \newtheorem{atiyahthe}[negthe]{Theorem}
\theoremstyle{remark} 
\theoremstyle{plain} \newtheorem{atlem}[negthe]{Lemma}
\theoremstyle{plain} \newtheorem{nonbg}[negthe]{Proposition}
\theoremstyle{plain} \newtheorem{posthe}{Theorem}[section]
\theoremstyle{remark} \newtheorem{injgen}[posthe]{Remark}
\theoremstyle{plain} \newtheorem{asinj}[posthe]{Lemma}
\theoremstyle{plain} \newtheorem{hkthe}[posthe]{Proposition}
\theoremstyle{definition} \newtheorem{gm}{Definition}[section] 
\theoremstyle{plain} \newtheorem{bccoeff}[gm]{Theorem}
\theoremstyle{definition} \newtheorem{adef}[gm]{Definition}
\theoremstyle{plain} \newtheorem{alem}[gm]{Lemma}
\theoremstyle{plain} \newtheorem{alem2}[gm]{Lemma}
\theoremstyle{definition} \newtheorem{unialg}{Definition}[section]
\theoremstyle{plain} \newtheorem{unimor}[unialg]{Proposition}
\theoremstyle{plain} \newtheorem{unitheo}[unialg]{Theorem}
\theoremstyle{plain} \newtheorem{unicor}[unialg]{Corollary}
\title{Higher index theory for certain expanders and Gromov monster groups I}
\author{Rufus Willett and Guoliang Yu\footnote{Partially supported by the NSF.}}
\date{}
\begin{document}

\maketitle

\begin{abstract}
In this paper, the first of a series of two, we continue the study of higher index theory for expanders.  We prove that if a sequence of graphs is an expander and the girth of the graphs tends to infinity, then the coarse Baum-Connes assembly map is injective, but not surjective, for the associated metric space $X$.  

Expanders with this girth property are a necessary ingredient in the construction of the so-called `Gromov monster' groups that (coarsely) contain expanders in their Cayley graphs. We use this connection to show that the Baum-Connes assembly map with certain coefficients is injective but not surjective for these groups.  Using the results of the second paper in this series, we also show that the maximal Baum-Cones assembly map with these coefficients is an isomorphism.
\end{abstract}

\tableofcontents

\section{Introduction}

The \emph{coarse Baum-Connes conjecture} postulates an algorithm for computing the higher indices of generalized elliptic operators on non-compact spaces.  More precisely, it claims that a certain coarse assembly map
$$
\lim_{R\to\infty}K_*(P_R(X))\to K_*(C^*(X))
$$
for a metric space $X$ is an isomorphism \cite{Roe:1993lq,Roe:1996dn}; this depends only on the large scale (or \emph{coarse}) geometry of $X$.  The right hand side above is a `noncommutative object' (the $K$-theory of a certain $C^*$-algebra, the \emph{Roe algebra of $X$}) and the left hand side is a `commutative object' (a limit of the $K$-homology groups of certain spaces, the \emph{Rips complexes of $X$}); seen in this way, the coarse Baum-Connes conjecture forms a bridge between Connes's theory of noncommutative geometry  \cite{Connes:1994zh} and classical topology and geometry. The conjecture has many applications, including to the Novikov conjecture when $X$ is a finitely generated group $\Ga$ equipped with a word metric, and to the existence of positive scalar curvature metrics when $X$ is a Riemannian manifold.  The \emph{coarse Novikov conjecture}, which states that the coarse assembly map is an injection, is sufficient for many of these applications, for example to positive scalar curvature.

In this paper, we will study spaces $X$ as in the definition below.
\begin{sg}\label{sg}
A metric space $X$ is called a \emph{space of graphs} if it is an infinite disjoint union 
$$
X=\sqcup_{n\in\N}G_n,
$$
where each $G_n$ is a finite connected graph and $X$ is equipped with any metric $d$ such that 
\begin{itemize}
\item the restriction of $d$ to each $G_n$ is the edge metric;
\item the pairwise distances $d(G_n,G_m)$ tend to infinity as $n+m\to\infty$ through pairs with $n\neq m$
\end{itemize}
(any metric satisfying these two conditions will give rise to the same coarse geometric structure on $X$).  
\end{sg}
\noindent
For our precise conventions on graphs and edge metrics, see Section \ref{cgsec} below.

Using cutting-and-pasting arguments (see e.g.\ \cite{Higson:1993th} and \cite{Yu:1998wj}) and the fact that any `reasonable' metric space is equivalent to a graph in coarse geometry, a lot of information on the general coarse Baum-Connes conjecture, and on many other coarse geometric properties, can be deduced from such spaces of graphs $X$.

It is a well-known fact that spaces of graphs are generically \emph{expanders} in the sense of the following definition -- see for example \cite{Lubotzky:1994tw}.

\begin{exdef}\label{exdef}
Let $(G_n)_{n\in\N}$ be a sequence of finite graphs.  The \emph{graph Laplacian}, denoted $\Delta_n$, is the bounded operator on $l^2(G_n)$ defined by the formula
$$
(\Delta_n f)(x)=f(x)-\sum_{d(x,y)=1}\frac{f(y)}{\sqrt{\text{degree}(x)\text{degree}(y)}};
$$
$\Delta_n$ is a positive operator of norm at most $2$ (see for example \cite[Section 5.2]{Bekka:2000kx} -- our formula is an adaptation of the one there to Hilbert spaces built from a uniform counting measure).
The space of graphs $X=\sqcup G_n$, or the sequence $(G_n)$, is called an \emph{expander} if the following hold:
\begin{enumerate}[(i)]
\item there exists $k\in\N$ so that all the vertices in each $G_n$ have degree at most $k$;
\item the cardinalities $|G_n|$ tend to infinity as $n$ tends to infinity;
\item there exists $c>0$ such that $\text{spectrum}(\Delta_n)\subseteq\{0\}\cup[c,1]$ for all $n$.
\end{enumerate}
The space $X$, or the sequence $(G_n)$, is called a \emph{weak expander} if conditions (ii) and (iii), but not necessarily (i), hold.
\end{exdef}
\noindent
Expanders have many applications in information theory and both applied and theoretical computer science.

Note that although expanders are generic, it is difficult to explicitly construct them.  This was first achieved by Margulis \cite{Margulis:1973lh}, using discrete groups with (relative) property (T). A variant of Margulis's construction proceeds as follows.  

\begin{mex}\label{mex}
Let $\Gamma$ be an infinite finitely generated group and $(\Gamma_n)_{n\in\N}$ be a sequence of finite index normal subgroups such that $\Ga$ has \emph{property $(\tau)$} (\cite[Chapter 4]{Lubotzky:1994tw}) with respect to $(\Ga_n)$ and so that $|\Gamma/\Gamma_n|\to\infty$ as $n\to\infty$.  Equip each of the finite groups $\Ga/\Ga_n$ with a graph structure by considering its Cayley graph with respect to the image of some fixed finite generating set of $\Ga$. Then the space of graphs
\begin{equation}\label{fromgroup}
X=\sqcup_{n\in\N}\Gamma/\Gamma_n
\end{equation} 
is an expander.  We call expanders arising in this way \emph{Margulis-type expanders}.
\end{mex}

As we have been discussing so far, spaces of graphs are of interest in of themselves for the coarse Baum-Connes conjecture.  Thus the fact that expanders are generic amongst spaces of graphs motivates their study in this area.  Another major motivation is that  expanders have interesting pathological properties with regard to the $K$-theory of the associated Roe $C^*$-algebras.  As a result of these two points, there has been much work on expanders from the point of view of the Baum-Connes and coarse Baum-Connes conjectures.  
\begin{itemize}
\item On the negative side, Higson \cite{Higson:1999km} showed that the coarse assembly map was not surjective for certain Margulis-type expanders.  In \cite[Section 6]{Higson:2002la}, Higson--Lafforgue--Skandalis used groupoid techniques to show that for any expander $X$, either the coarse Baum-Connes assembly map for $X$ fails to be surjective, or the Baum-Connes assembly map with certain coefficients for an associated groupoid fails to be injective; they also show that in the case of certain Margulis-type expanders, the former always occurs.  Also in the negative direction, \v{S}pakula has exhibited further pathological properties by showing that uniform Roe algebras associated to certain expanders are not $K$-exact \cite{Spakula:2009rr}.  Note that a counterexample to the coarse Novikov conjecture for \emph{non}-bounded-geometry spaces (not obviously related to expanders) was given by the second author in \cite[Section 8]{Yu:1998wj}.
\item On the positive side, injectivity of the coarse assembly map (i.e.\ the coarse Novikov conjecture), or of its maximal version, is known to be true for certain classes of expanders by work of Gong--Wang--Yu \cite{Gong:2008ja}, Chen--Tessera--Wang--Yu \cite{Chen:2008so}, Guentner--Tessera--Yu \cite{Guentner:2008gd} and Oyono-Oyono--Yu \cite{Oyono-Oyono:2009ua}.  Moreover, the work of Oyono-Oyono--Yu cited above also proves isomorphism of the maximal version of the coarse assembly map.
\end{itemize}
\noindent
Most\footnote{The exception is the paper of Higson--Lafforgue--Skandalis -- see \cite[Proposition 10]{Higson:2002la} -- but the sharpest version of their result requires Margulis-type expanders -- cf.  \cite[Proposition 11]{Higson:2002la} and preceeding discussion.} of these results only apply to the Margulis-type expanders from Definition \ref{mex} above.  It is desirable to prove analogues of these results where the only assumptions on $X$ are graph theoretic, or coarse geometric.

This is partly achieved in this paper.  Recall first that the \emph{girth} of a graph $G$, denoted $\text{girth}(G)$, is the shortest length of a cycle in $G$.
\begin{gidef}\label{gidef}
Let $X=\sqcup G_n$ be a space of graphs as in Definition \ref{sg}.  The space $X$ (or the sequence $(G_n)_{n\in\N}$) is said to have \emph{large girth} if $\text{girth}(G_n)\to\infty$ as $n\to\infty$.
\end{gidef}

Our main aim is the following very natural result; for precise statements, see Theorems \ref{negthe} and \ref{posthe} below, and Theorem 1.1 from the second paper in this series \cite{Willett:2010zh}.

\begin{girthmain}\label{girthmain}
Let $X=\sqcup G_n$ be a space of graphs as in Definition \ref{sg} above, with large girth as in Definition \ref{gidef}.
\begin{enumerate}[(i)]
\item If $X$ is a weak expander, then the coarse assembly map fails to be surjective for $X$.
\item The coarse assembly map is injective for $X$ (i.e.\ the \emph{coarse Novikov conjecture} holds for $X$).
\item If there is a uniform bound on the vertex degrees of the graphs $G_n$, then the maximal coarse assembly map is an isomorphism for $X$.
\end{enumerate}
\end{girthmain}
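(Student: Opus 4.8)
The plan is to reduce everything, at each fixed scale, to the universal covering trees of the $G_n$. Write $\Ga_n=\pi_1(G_n)$ (a free group) and $\wgn$ for the universal cover of $G_n$: a locally finite tree carrying a free cocompact $\Ga_n$-action with quotient $G_n$. Large girth says precisely that $\wgn\to G_n$ is isometric on balls of radius about $\gi(G_n)/2$, so for each $R$ and all large $n$ one has $P_R(G_n)=P_R(\wgn)/\Ga_n$ with $P_R(\wgn)$ contractible (a Rips complex of a tree). Thus at any fixed scale $X=\sqcup G_n$ is modelled, up to the deck actions, by a family of trees --- and trees coarsely embed into Hilbert space in a completely uniform way. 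The vehicle for exploiting this is the family of ``uniform product'' algebras $\utwist$, $\ultwist$, together with their maximal and $\mathcal{K}(\mathcal{L}^2_n)$-coefficient companions displayed above: their K-theory receives the coarse assembly map for $X$ on one side, and on the other becomes computable via Bott periodicity once the coefficients have been twisted to the Clifford-type algebras $\ahn$ of the local Hilbert spaces $\mathcal{V}_n$ carrying the embeddings of the $\wgn$.

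For part (i) (precisely, Theorem \ref{negthe}) the key object is the \emph{ghost projection} $p=\bigoplus_n p_n\in C^*(X)$, with $p_n=\chi_{\{0\}}(\Delta_n)$ the rank-one spectral projection onto the constants on $G_n$. It lies in the Roe algebra because the spectral gap of Definition \ref{exdef}(iii) makes $\chi_{\{0\}}$ continuous on $\bigcup_n\text{spectrum}(\Delta_n)\subseteq\{0\}\cup[c,1]$, so $p$ is a norm-limit of polynomials in the propagation-one operator $\bigoplus_n\Delta_n$. That $[p]\neq 0$ in $K_0(C^*(X))$ is seen on mapping to $\prod_n\mathcal{K}(\mathcal{L}^2_n)/\bigoplus_n\mathcal{K}(\mathcal{L}^2_n)$, where the $K_0$-class of $p$ goes to $(1,1,\dots)$ in $\prod_n\Z/\bigoplus_n\Z$. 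The real content is that $[p]$ is \emph{not} in the image of the coarse assembly map, and here large girth is decisive: using the localization-algebra picture of that image together with the tree model, one shows every class in the image is accounted for by the tree family $(\wgn,\Ga_n)$ --- on each contractible $P_R(\wgn)$ the local index data collapses --- whereas the ghost is not. (Concretely, the equivalence of $p_n$ with a rank-one projection at a basepoint of $G_n$ is implemented only by a partial isometry of propagation $\sim\text{diam}(G_n)\to\infty$, which is not a norm-limit of finite-propagation operators on $X$; the point is that no equivalence whatsoever can repair this, since assembly produces only classes visible on the tree model.)

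Part (ii) (Theorem \ref{posthe}) is the coarse Novikov conjecture for $X$, and it must hold even when $X$ is an expander, so the classical route via coarse embeddability of $X$ into Hilbert space is unavailable. Instead each tree $\wgn$ \emph{does} coarsely embed into a Hilbert space $\mathcal{V}_n$ --- with control functions independent of $n$, and $\Ga_n$-equivariantly --- which amounts to a ``fibred'' coarse embedding of $X$ into Hilbert space, the scale-$R$ local embeddings patching only up to affine isometries over bounded sets (the slack forced by the global twisting of the covers). The plan is then to run Yu's Dirac--dual-Dirac proof that coarse embeddability implies the coarse Novikov conjecture, in this fibred, $\Ga_n$-equivariant form: build the twisted Roe algebras $\utwist$ and their localized versions $\ultwist$ with coefficients in the $\Z/2$-graded algebras $\ahn$, and show the twisted assembly map between them is an isomorphism by a Mayer--Vietoris induction over the finite-dimensional affine subspaces of the $\mathcal{V}_n$, passing to infinite-dimensional Bott periodicity in the limit. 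Composing with the Bott map induced by a $*$-homomorphism $\mathcal{K}(\mathcal{L}^2_n)\to\ahn$ then produces a left inverse to the untwisted coarse assembly map for $X$, giving injectivity.

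Part (iii) adds a uniform bound on vertex degrees, i.e.\ bounded geometry. The injectivity argument of part (ii) is indifferent to the choice of $C^*$-completion --- every algebra above has a maximal version and the Bott-periodicity computation is unchanged --- so the maximal coarse assembly map for $X$ is injective too. Its surjectivity is Theorem 1.1 of the sequel \cite{Willett:2010zh}: under bounded geometry the controlled-K-theory and Mayer--Vietoris methods there go through for the \emph{maximal} Roe algebra, precisely because of the exactness and functoriality it enjoys but the reduced algebra of an expander lacks. Combining injectivity and surjectivity gives the isomorphism of (iii). I expect the main obstacle throughout to be the isomorphism of the twisted assembly map: it underlies both the left inverse in (ii) and, through its maximal form feeding the sequel, the surjectivity in (iii), and establishing it means running infinite-dimensional Bott periodicity ``with parameters'' while controlling the deck-group equivariance and the uniformity over $n$ at once; the detection of the ghost's absence from the assembly image in part (i) is the subtlest conceptual point.
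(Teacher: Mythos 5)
Your overall architecture --- reduce everything at each fixed scale $R$ to the family of universal covering trees $(\wgn,\Ga_n)$ via large girth --- is exactly the paper's. But for part (i) your sketch is missing the two mechanisms that actually close the argument. First, to compare $C^*(X)$ with the tree data one needs the lifting $*$-homomorphism $\phi:C^*(X)\to\prod_n C^*(\wgn)^{\Ga_n}/\oplus_n C^*(\wgn)^{\Ga_n}$; asymptotic faithfulness only defines this on the finite-propagation subalgebra $\C[X]$, and extending it to the reduced completion (where the ghost projection $p$ lives --- it is a norm limit of finite-propagation operators, not itself of finite propagation) requires the \emph{uniform operator norm localization property} for the family of trees $\wgn$ (Lemmas \ref{treelem} and \ref{onllem}). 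The same property is what shows $\phi$ annihilates every ghost (Lemma \ref{ghostlem}). Second, your claim that ``no equivalence whatsoever can repair this'' is not established by observing that the implementing partial isometry has propagation $\sim\mathrm{diam}(G_n)$: one must exhibit an invariant of $K_0$-classes separating $[p]$ from the image of assembly. The paper's invariant is the pair of traces $d_*$ (dimensions downstairs) and $T\circ\phi_*$ ($\Ga_n$-traces upstairs), and the statement that these agree on the image of assembly is an abstract Atiyah $\Ga$-index theorem (Theorem \ref{atiyahthe} and Lemma \ref{atlem}). In particular ``the local index data collapses on the contractible $P_R(\wgn)$'' is not the point --- $K_*^{\Ga_n}(P_R(\wgn))\cong K_*(P_R(G_n))$ does not collapse; rather the lifted class has the \emph{same} trace as the original, while the lift of the ghost is zero.

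For part (ii) you propose the fibred coarse embedding / twisted Roe algebra / infinite-dimensional Bott periodicity machinery; that is essentially the apparatus of the second paper in the series (where it proves the \emph{maximal} isomorphism of part (iii)), and running it at the reduced completion to obtain a left inverse of the reduced assembly map raises the same unaddressed issue as above: the twisted algebras and the Dirac morphism must be shown to descend to $C^*(X)$, not just $C^*_{max}(X)$, and injectivity of the maximal assembly map does \emph{not} imply injectivity of the reduced one, since assembly factors through $K_*(C^*_{max}(X))\to K_*(C^*(X))$. The paper's actual proof of (ii) is far more elementary: the diagram \eqref{asymcom} reduces injectivity to injectivity of the ``assembly map at infinity'' $\mu_\infty$, which by stability and the six-term sequence is the product over $n$ of the equivariant assembly maps for the free groups $\Ga_n$ acting on the contractible complexes $P_R(\wgn)$; these are isomorphisms by Pimsner--Voiculescu/Kasparov, so no Hilbert space embedding, twisting, or Bott periodicity is needed. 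Part (iii) you correctly defer to the sequel, as does the paper.
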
  
\noindent
Most of our results apply somewhat more generally than to graphs with large girth: see  Theorem \ref{negthe} and Remark \ref{injgen} below, and also Remark 3.1 from the second paper in this series \cite{Willett:2010zh}.  

\v{S}pakula \cite{Spakula:2009tg} has developed a version of the coarse assembly map for the uniform Roe algebra.  We will not prove the following theorem in full detail, but Appendix \ref{unicase} collects together the necessary adjustments to the proof of Theorem \ref{girthmain}.

\begin{unithe}\label{unithe}
Let $X=\sqcup G_n$ be a space of graphs as in Definition \ref{sg} above, with large girth as in Definition \ref{gidef}.
\begin{enumerate}[(i)]
\item If $X$ is a weak expander, then the uniform coarse assembly map fails to be surjective for $X$.
\item The uniform coarse assembly map is injective for $X$.
\item If there is a uniform bound on the vertex degrees of the graphs $G_n$, then the maximal uniform coarse assembly map is an isomorphism for $X$.
\end{enumerate}
\end{unithe}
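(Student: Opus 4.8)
The plan is to run the proof of Theorem \ref{girthmain} a second time, systematically replacing the Roe algebra $C^*(X)$ and the coarse $K$-homology groups $\lim_{R\to\infty}K_*(P_R(X))$ by their uniform analogues: the uniform Roe algebra $C^*_u(X)$ --- the norm closure inside $B(\ell^2(X))$ of the finite-propagation operators whose matrix coefficients take only finitely many values --- and \v{S}pakula's uniform $K$-homology $\lim_{R\to\infty}K^u_*(P_R(X))$ \cite{Spakula:2009tg}, joined by the uniform coarse assembly map. Concretely, one revisits each of the $C^*$-algebras appearing in the proof of Theorem \ref{girthmain} --- the reduced and maximal Roe algebras, their localisation algebras, the ``obstruction'' quotients, and the twisted algebras built from the infinite-dimensional Clifford bundles over the Rips complexes $P_R(X)$ --- and writes down for each the corresponding ``uniform'' version, in which the operators on the fibrewise Hilbert spaces are required to be uniformly locally finite and uniformly norm bounded; one then checks that the index maps, Mayer--Vietoris sequences, and cutting-and-pasting isomorphisms descend to these uniform versions. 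This is what Appendix \ref{unicase} carries out; once \v{S}pakula's framework is in place it is essentially a bookkeeping exercise.

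For part (i), the non-surjectivity in the proof of Theorem \ref{negthe} is produced by a \emph{ghost projection}: for a weak expander the spectral projection of the graph Laplacians onto the eigenvalue $0$ (equivalently, the associated Kazhdan-type projection) is an element of $C^*_u(X)$ whose off-diagonal matrix coefficients tend to zero, it fails to be compact, and --- using the large girth to identify the relevant part of the coarse $K$-homology --- it represents a class in $K_0(C^*_u(X))$ outside the image of the uniform assembly map. This argument lives entirely inside the uniform Roe algebra, so it transfers with essentially no change; indeed the ghost projection visibly belongs to $C^*_u(X)$, not merely to $C^*(X)$.

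For part (ii), the core of the proof of Theorem \ref{posthe} is that large girth supplies a local substitute for a coarse embedding of $X$ into Hilbert space --- fixed-radius balls in $X$ are trees, and trees embed isometrically --- which is enough to run an infinite-dimensional Bott-periodicity / Dirac--dual-Dirac argument producing a one-sided inverse to the assembly map. All the data involved (the twisted algebras over $P_R(X)$ and the index maps between them) is coarse-geometric, so the construction carries over; what must be checked is that it is compatible with \v{S}pakula's uniform $K$-homology and with $K_*(C^*_u(X))$, which follows from the functoriality of these under coarse maps. For part (iii), when the vertex degrees are uniformly bounded one works instead with the maximal uniform Roe algebra, and the isomorphism follows by combining the uniform analogue of the vanishing of the obstruction group proved in the second paper of this series \cite{Willett:2010zh} with the uniform version of the five-lemma comparison between the assembly maps for the maximal uniform Roe algebra, its localisation algebra, and the obstruction algebra.

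I expect the main obstacle to be the bookkeeping underlying part (ii): one must verify that the infinite-dimensional Bott-periodicity machinery --- the twisted algebras over $P_R(X)$, their assembly maps, and the homotopy realising the Dirac--dual-Dirac factorisation --- all survive the passage to the uniform setting, and in particular that every operator occurring in the deformations has controlled propagation and uniformly bounded local structure. Parts (i) and (iii) are, by comparison, routine translations of the corresponding non-uniform arguments.
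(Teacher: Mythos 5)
Your top-level plan -- define uniform analogues of every algebra in sight and re-run the arguments -- is exactly what the paper does (Appendix \ref{unicase}), and your accounts of parts (i) and (iii) match: the basic Kazhdan projection visibly lies in $C^*_u(X)$, the ghost/trace argument of Section \ref{surjsec} transfers, and (iii) is deferred to the uniform version of the second paper's machinery. Two smaller corrections first: the uniform Roe algebra is not defined by ``matrix coefficients taking only finitely many values'' -- $\C_u[X]$ is simply the finite-propagation operators on $l^2(X)$, and the $\Hi$-valued version $U\C[X]$ imposes uniform approximability by operators of rank $\leq N$; and the two subtleties the appendix actually flags are the non-stability of $C^*_{u,max}(X)$ (handled via the Morita equivalence with the stable algebra $UC^*_{max}(X)$) and the rank condition in the localization algebra (one must \emph{not} impose a uniform rank bound across all $f(t)$, or the Eilenberg swindles of the second paper fail). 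You flag neither.

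The genuine problem is your part (ii). The paper's proof of Theorem \ref{posthe} is \emph{not} a Dirac--dual-Dirac argument over the Rips complexes $P_R(X)$: it is the diagram \eqref{asymcom}. One lifts along the asymptotically faithful sequence of universal covers $\wgn$ (trees), uses the uniform operator norm localization property of trees (Lemma \ref{treelem}) to obtain the map $\phi:C^*(X)\to \prod_n C^*(\wgn)^{\Gamma_n}/\oplus_n C^*(\wgn)^{\Gamma_n}$, and reduces injectivity to injectivity of the Baum--Connes assembly map for the free groups $\Gamma_n$ (Proposition \ref{hkthe}, Pimsner--Voiculescu/Kasparov), finishing with a diagram chase. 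The route you describe -- twisted algebras with infinite-dimensional Clifford bundles over $P_R(X)$ and a one-sided Dirac--dual-Dirac inverse -- cannot be run over $X$ itself: an expander admits no coarse embedding into Hilbert space, and ``fixed-radius balls are trees'' does not patch into the global embedding that the Bott--Dirac construction over $P_R(X)$ requires. That machinery lives over the \emph{covers}, and what it yields there is the \emph{maximal} isomorphism of the second paper (part (iii)); the uniform Dirac--dual-Dirac compatibility that the appendix cites from Willett--\v{S}pakula belongs to that step, not to (ii). So the argument you would be ``uniformizing'' for injectivity is not the one in the paper, and as stated it does not exist; the correct task for (ii) is to check that the lifting map $\phi$, the induction isomorphisms, and Proposition \ref{hkthe} all have uniform analogues.
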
  

There are many explicit examples of expanders with girth tending to infinity coming from property $(\tau)$ groups, and related constructions: a particularly nice one is the sequence of \emph{Ramanujan graphs} found by Lubotzky, Phillips and Sarnak \cite{Lubotzky:1988fu}.  

On the other hand, in the second paper in this series we introduce a new property, called \emph{geometric property (T)}, which is an obstruction to isomorphism of the maximal coarse assembly map.  Geometric property (T) is thus in some sense a strong opposite to the property of having large girth.  Using geometric property (T), we deduce that no Margulis-type expander coming from a property (T) (as opposed to property $(\tau)$) group can be coarsely equivalent to an expander with large girth; in particular, there are a large class of expanders to which the methods of the current work cannot apply.  See Corollary 7.4 in the second paper in this series \cite{Willett:2010zh} for this and some other purely geometric corollaries.

Also important to us is the fact that expanders with this girth property are the central ingredient in Gromov's construction of groups that do not coarsely embed in Hilbert space \cite{Gromov:2003gf}.  A complete exposition of Gromov's construction has recently been provided by Arzhantseva and Delzant \cite{Arzhantseva:2008bv}.  Using the relationship between the Baum-Connes and coarse Baum-Connes conjectures \cite{Yu:1995zl}, the theorem above has the following corollary (see Section \ref{bcsec} below for details).

\begin{gmcor}\label{gmcor}
Say $\Ga$ is a countable discrete group containing a coarsely embedded sequence of expanders with large girth (in particular, any of the groups shown to exist using the methods of Gromov).  Then there exists a $\Ga$-$C^*$-algebra $A$ such that:
\begin{enumerate}[(i)]
\item the Baum-Connes assembly map for $\Ga$ with coefficients in $A$ is injective;
\item the Baum-Connes assembly map for $\Ga$ with coefficients in $A$ is not surjective;
\item the maximal Baum-Connes assembly map for $\Ga$ with coefficients in $A$ is an isomorphism.
\end{enumerate}
Moreover, using Theorem \ref{unithe}, one may take the coefficient algebra $A$ to be commutative.
\end{gmcor}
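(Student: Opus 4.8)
The plan is to derive the corollary from Theorem \ref{girthmain} --- together with Theorem 1.1 of \cite{Willett:2010zh} for the maximal statement --- by means of the standard comparison between the coarse Baum--Connes conjecture for a metric space and the Baum--Connes conjecture with coefficients for a group into which that space coarsely embeds, in the form developed in \cite{Yu:1995zl}.

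First I would fix the geometry. Equip $\Gamma$ with a word metric if it is finitely generated, and with a proper left-invariant metric otherwise; either choice determines the same coarse structure. Let $f\colon X=\sqcup G_n\to\Gamma$ be the given coarse embedding and set $Z:=f(X)\subseteq\Gamma$, with the metric inherited from $\Gamma$. Then $f$ restricts to a coarse equivalence $X\to Z$, so $Z$ is a bounded geometry space coarsely equivalent to $X$, and the pieces $f(G_n)$ are mutually coarsely separated in $Z$. Since injectivity, failure of surjectivity, and isomorphism of the (maximal) coarse assembly map are invariants of the coarse equivalence class of a bounded geometry space, and since $X$ is an expander --- hence in particular a weak expander with a uniform bound on the vertex degrees of the $G_n$ --- Theorem \ref{girthmain}, applied to $X$ and transported along $f$, shows that the coarse assembly map for $Z$ is injective, is not surjective, and has isomorphic maximal version.

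Next I would invoke the coefficient-algebra construction. From the inclusion $Z\subseteq\Gamma$ as a metric subspace, the techniques of \cite{Yu:1995zl} (carried out in Section \ref{bcsec} below) produce a $\Gamma$-$C^*$-algebra $A$ --- obtained by inducing the coarse structure of $Z$ to an equivariant coefficient algebra inside a $\Gamma$-saturated, Stone--\v{C}ech-type enlargement of $\ell^\infty(\Gamma)$ --- together with natural isomorphisms under which the coarse assembly map for $Z$ is identified with the Baum--Connes assembly map for $\Gamma$ with coefficients in $A$; and similarly in the maximal setting, with the maximal Roe algebra of $Z$ on one side and the maximal crossed product $A\rtimes_{\max}\Gamma$ on the other. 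Feeding in the three properties of the coarse assembly map for $Z$ from the previous paragraph then yields parts (i), (ii) and (iii). For the final sentence one runs the argument through the uniform coarse assembly map, using Theorem \ref{unithe} and Appendix \ref{unicase} in place of Theorem \ref{girthmain}: the uniform Roe algebra of $\Gamma$ is $\ell^\infty(\Gamma)\rtimes_r\Gamma$ with no stabilisation, so the corresponding coefficient algebra $A$ may be taken commutative --- concretely, a $\Gamma$-invariant subquotient of $\ell^\infty(\Gamma)$.

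The main obstacle is the coefficient-algebra construction and the verification that it intertwines the two assembly maps, both in the reduced and --- more delicately --- in the maximal completion. One has to pass from the purely metric datum of the coarse embedding to a genuine $\Gamma$-$C^*$-algebra, which forces the use of boundary compactifications (or of the coarse groupoid), since $f(X)$ is not itself $\Gamma$-invariant; one has to match the two left-hand sides --- a limit of $K$-homology groups of Rips complexes against an equivariant $K$-homology group --- compatibly with the assembly maps; and one has to check that the maximal crossed product on the right-hand side really does compute the $K$-theory of the maximal Roe algebra. With those verifications in hand, the corollary follows formally from Theorem \ref{girthmain}, Theorem \ref{unithe} and Theorem 1.1 of \cite{Willett:2010zh}.
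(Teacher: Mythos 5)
Your proposal is correct and follows essentially the same route as the paper: apply Theorem \ref{girthmain} (i.e.\ Theorems \ref{negthe} and \ref{posthe} together with Theorem 1.1 of the sequel) to the embedded expander, and use the construction of Section \ref{bcsec} --- where $A$ is taken to be the direct limit of the algebras $l^\infty(X_n,\mathcal{K})$ over the $n$-neighbourhoods $X_n$ of $f(X)$ inside $l^\infty(\Gamma,\mathcal{K})$, rather than a Stone--\v{C}ech-type enlargement, with Lemmas \ref{alem} and \ref{alem2} supplying the identification of the assembly maps --- to transport the conclusions to the Baum--Connes setting, with the commutative case handled via the uniform theory exactly as you indicate.
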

\noindent
The contrast with the assembly map without coefficients for $\Ga$ is striking: here little is known about the usual (reduced) assembly map, while the maximal assembly map is injective, but not surjective (assuming, as we may, that $\Ga$ has property (T)).  The existence of an example for which the usual Baum-Connes conjecture (with coefficients) fails, but its maximal version is true, is also suggestive of new phenomena in noncommutative harmonic analysis.

The introduction of geometric property (T) and our study of ghost operators lead to several new open questions, which we state as open problems at various points in the main piece -- see \ref{ghostprob} and \ref{taupprob} below and 7.5, 7.6 and 7.7 in the second paper \cite{Willett:2010zh}.  We have made a deliberate effort to make the piece as accessible as possible, while trying to keep its length under some sort of control.

\subsection*{Outline of the piece}

This is the first of a series of two papers.  It deals with the necessary background, and injectivity and surjectivity results for the coarse assembly map for expanders with large girth.  It also connects these results to the Baum-Connes conjecture with coefficients for Gromov monster groups.  The second paper in the series proves that the maximal coarse assembly map is an isomorphism for spaces of graphs with large girth.  Combined, these two papers give a fairly complete picture of the higher indices coming from this particular class of expanders.

Sections \ref{cgsec} to \ref{asssec} mainly cover background material.
Section \ref{cgsec} gives our conventions on graphs and covering spaces, and introduces the property of \emph{asymptotic faithfulness} for a sequence of covers; this underpins much of the rest of the paper.  Section \ref{algsec} introduces the main versions of the \emph{Roe algebra} of a metric space, a $C^*$-algebra that captures the coarse geometry of the space and whose $K$-theory is a receptacle for higher indices, that we will use throughout the piece.  It also introduces the \emph{operator norm localization property} of Chen--Tessera--Wang--Yu \cite{Chen:2008so} that will be another important tool.  Section \ref{asssec} gives the basic background on assembly maps that we will need later in the piece.

Sections \ref{ghostsec} to \ref{injsec} discuss surjectivity and injectivity of the coarse assembly map.  Section \ref{ghostsec} introduces \emph{ghost operators}, a class of highly `non-local' operators, and constructs non-trivial (i.e.\ non-compact) examples in the Roe algebras of certain spaces.  Section \ref{surjsec} shows that, under certain hypotheses guaranteeing asymptotic faithfulness and the operator norm localization property, $K$-theory classes coming from non-compact ghost operators cannot appear in the image of the coarse assembly map; in the cases that such operators exist (for example, in the case of expanders), this yields counterexamples to surjectivity of the coarse Baum-Connes conjecture.  Our analysis in this section is based on ideas of Higson \cite{Higson:1999km}.  Section \ref{injsec} proves injectivity of the coarse assembly map for sequences of graphs with large girth; the essential ingredients are asymptotic faithfulness, the operator norm localization property, and the strong Novikov conjecture for free groups.

Section \ref{bcsec} uses an identification of the Baum-Connes assembly map for a \emph{Gromov monster} group with certain coefficients, and the coarse Baum-Connes assembly map for an expander with large girth, to apply our results to the Baum-Connes conjecture for such groups.  Finally, Appendix \ref{unicase} discusses the necessary changes to extend our results to the uniform case studied by \v{S}pakula \cite{Spakula:2009tg}; in particular, this implies that our results on the Baum-Connes conjecture can be made to work with commutative coefficients.

\section{Covers and graphs}\label{cgsec}

In this section we set up basic terminology about graphs and coverings of graphs.  Some of this is slightly non-standard: for example, we identify a graph with its vertex set, equipped with some additional structure.

If $(X,d)$ is a metric space, $x\in X$ and $R>0$ we denote by
$$
B(x,R)=\{y\in X~|~d(x,y)<R\}
$$
the open ball about $x$ of radius $R$.

The following definition is central to this piece.

\begin{cc}\label{cc}
Let $X$ be a metric space, and let $\pi:\widetilde{X}\to X$ be a surjective map.  Let $R>0$.  Then $(\widetilde{X},\pi)$ is called an \emph{R-metric cover} of $X$ if for all $x\in \widetilde{X}$, the restriction of $\pi$ to the ball $B(x,R)$ of radius $R$ around $x$ in $\widetilde X$ is an isometry onto $B(\pi(x),R)$, the ball of radius $R$ about $\pi(x)$ in $X$. 
\end{cc}

We now specialize to graphs.  For us, a \emph{graph} $G$ will consist as a set of the collection of zero-simplices in an unoriented one-dimensional simplicial complex such that every \emph{vertex} (i.e.\ zero-simplex) is a face of only finitely many \emph{edges} (i.e.\ one-simplices). The number of edges each vertex is a face of is called its \emph{degree}.  We write $x,y\in G$ for vertices, and $(x,y)$ for the (necessarily unique) edge connecting $x$ and $y$.  The \emph{edge metric} on a graph $G$ (recalling that $G$ simply denotes the vertex set) is defined by
\begin{align*}
d(x,y)=\min\{n~|~\text{there exist } &  x=x_0,...,x_n= y\\ & \text{ such that each $(x_i,x_{i+1})$ is an edge}\}.
\end{align*}

If we ever want to discuss the graph as a one-dimensional topological simplicial complex, we refer to the \emph{simplicial graph}.   This is useful to make sense of notions such as \emph{covering space}, \emph{Galois covering space}, \emph{universal cover}, \emph{covering group} etc.\ from algebraic topology.  For example, by a \emph{Galois covering space} of $G$ we mean a graph $\widetilde{G}$ such that the simplicial graph associated to $\widetilde{G}$ is a Galois covering space of the simplicial graph associated to $G$ via a covering map that is a simplicial map; this implies of course that (the vertex set) $\widetilde{G}$ is equipped with a map to $G$, and an action of a covering group by deck transformations.

\begin{cf}\label{cf}
Let $X=\sqcup G_n$ be a space of graphs.  We call a sequence $\widetilde{X}=(\wgn)$ a \emph{covering sequence for $X$} if $\wgn$ is a Galois covering graph of $G_n$ for each $n$.  Denote by $\pi_n:\wgn\to G_n$ the associated covering maps.  

The sequence $\widetilde{X}$ is said to be \emph{asymptotically faithful} if for all $R>0$ there exists $N_R\in\N$ such that for all $n\geq N_R$, the map $\pi_n:\widetilde{G_n}\to G_n$ is an $R$-metric cover.
\end{cf}

The following examples are important for us.

\begin{cfex}\label{cfex}
\begin{enumerate}[(i)]
\item Let $G$ be a graph.  A \emph{cycle of length $n$} in $G$ is a finite ordered set $$\{(x_0,x_1),(x_1,x_2),...,(x_{n-1},x_n)\}$$ of edges of $G$ such that $x_0=x_n$ but $x_i\neq x_j$ for all $0<i<j<n$.  The \emph{girth} of $G$ is the shortest length of a cycle in $G$.  Note that if the girth of $G$ is $g$, then the covering map $\pi:\widetilde{G}\to G$   from the universal cover $\widetilde{G}$ of $G$ (a tree) to $G$ itself is a $\lfloor g/2\rfloor$-metric cover.  

In particular, if $(G_n)_{n\in\N}$ is a sequence of graphs such that girth$(G_n)\to\infty$ as $n\to\infty$ and $(\wgn)_{n\in\N}$ is the corresponding sequence of universal covers, then $\widetilde{X}=(\wgn)_{n\in\N}$ is an asymptotically faithful covering sequence for $X=\sqcup G_n$; indeed, the covering sequence given by the universal covers is asymptotically faithful if and only if $\gi(G_n)\to\infty$.
\item Say $\Gamma$ is a discrete group, equipped with a finite generating set $S$, and identified with the (vertex set of the) Cayley graph associated to this generating set (the edge metric then agrees with the word metric on $\Gamma$ coming from $S$).  Let $(\Ga_n)_{n\in\N}$ be a sequence of finite index normal subgroups such that for all $R>0$ there exists $N_R\in\N$ so that if $B(e,R)$ is the ball in $\Ga$ of radius $R$ about the identity, then $\Ga_n\cap B(e,R)=\{e\}$ for all $n\geq N_R$.  Let $G_n$ be the Cayley graph of  $\Gamma/\Gamma_n$ taken with respect to the image of the generating set $S$ (assume $(S\cup S^2)\cap \Gamma_n=\{e\}$ for all $n$ to avoid pathologies).  Then the constant sequence $\widetilde{X}=(\Gamma)_{n\in\N}$ is an asymptotically faithful covering sequence for $X=\sqcup G_n$.
\end{enumerate}
\end{cfex}

\section{Roe algebras and the operator norm localization property}\label{algsec}

In this section we introduce some versions of the \emph{Roe algebra} of a metric space $X$, a $C^*$-algebra originally defined by Roe \cite{Roe:1993lq,Roe:1996dn}.  We also discuss the \emph{operator norm localization property} of Chen, Tessera, Wang and the second author \cite{Chen:2008so}, which will be useful to relate properties of a space of graphs to those of sequences of covers.

\begin{udbg}\label{udbg}
Let $(X,d)$ be a metric space.  $X$ is said to be \emph{$\delta$-separated} for some $\delta>0$ if $d(x,y)\geq\delta$ for all $x,y\in X$ with $x\neq y$; $X$ is said to be \emph{uniformly discrete} if it is $\delta$-separated for some $\delta>0$.  If $X$ is uniformly discrete, it is said to be of \emph{bounded geometry} if for all $R>0$ there exists $N_R\in\N$ such that $|B(x,R)|\leq N_R$.  

A general proper metric space $X$ has \emph{bounded geometry} if for some $\delta>0$ some (equivalently, any) maximal $\delta$-separated subspace of $X$ has bounded geometry in the sense above. 
\end{udbg}

Fix for the rest of the piece $\Hi$, an infinite dimensional separable Hilbert space, and let $\mathcal{K}:=\mathcal{K}(\Hi)$ be the algebra of compact operators on $\Hi$.  The following algebras are important partly as their $K$-theory provides a receptacle for higher indices of elliptic operators.  The definition is due to Roe \cite{Roe:1993lq,Roe:1996dn}.

\begin{roe}\label{roe}
Let $X$ be a proper metric space, and fix $Z$ a countable dense subset of $X$.  Let $T$ be a bounded operator on $l^2(Z,\Hi)$ and write $T=(T_{x,y})_{x,y\in Z}$ so that each $T_{x,y}$ is an element of $\mathcal{B}(\Hi)$.   $T$ is said to be \emph{locally compact} if:
\begin{enumerate}
\item all the matrix entries $T_{x,y}$ are in $\mathcal{K}(\Hi)$;
\item for any bounded subset $B\subseteq X$, the set
$$
\{(x,y)\in (B\times B)\cap (Z\times Z)~|~T_{x,y}\neq 0\}
$$
is finite;
\end{enumerate}
The \emph{propagation of $T$} is defined to be
$$
\text{prop}(T):=\inf\{S>0~|~T_{x,y}=0 \text{ for all } x,y\in Z \text{ with } d(x,y)>S\}.
$$
The \emph{algebraic Roe algebra of $X$}, denoted $\C[X]$, is the $*$-subalgebra of $\mathcal{B}(l^2(Z,\Hi))$ consisting of all locally compact operators of finite propagation.  The \emph{Roe algebra of $X$}, denoted $C^*(X)$, is the closure of $\C[X]$ inside $\mathcal{B}(l^2(Z,\Hi))$.

Say now that $X$ has bounded geometry.  The \emph{maximal Roe algebra}, denoted $C^*_{max}(X)$, is the completion of $\C[X]$ for the norm
$$
\|T\|_{max}:=\sup\{\|\pi(T)\|_{\mathcal{B}(\mathcal{H})}~|~\pi:\C[X]\to\mathcal{B}(\mathcal{H}) \text{ a $*$-representation}\}
$$
(the bounded geometry assumption is sufficient for this expression to be finite -- see for example \cite[Lemma 3.4]{Gong:2008ja}).
\end{roe}

\begin{rar}\label{rar}
If $X$ is uniformly discrete, as will be the case in most of our examples, we have no choice but to take $Z=X$ in the above.  This simplifies the definition in this case.  The definition above is used, however, as we will sometimes need variants of the Roe algebra  that take \emph{both} the local and large-scale structure of $X$ into account, and it allows for a uniform treatment.
\end{rar}

The following definition introduces the natural notions of `injection' and `isomorphism' in coarse geometry.

\begin{ce}\label{ce}
Let $(X,d_X)$ and $(Y,d_Y)$ be metric spaces.  A (not necessarily continuous) map $f:X\to Y$ is said to be a \emph{coarse embedding} if there exist non-decreasing functions $\rho_{\pm}:\R_+\to \R_+$ such that $\rho_\pm(t)\to\infty$ as $t\to\infty$ and
$$
\rho_-(d_X(x,y))\leq d_Y(f(x),f(y))\leq \rho_+(d_X(x,y))
$$
for all $x,y\in X$.  The spaces $X$ and $Y$ are said to be \emph{coarsely equivalent} if there exist coarse embeddings $f:X\to Y$ and $g:Y\to X$ and a constant $C\geq 0$ such that 
$$
d_X(x,g(f(x)))\leq C,~~d_Y(y,f(g(y)))\leq C
$$
for all $x\in X$ and $y\in Y$.
\end{ce}

See for example \cite[Secton 4, Lemma 3]{Higson:1993th} for the $K$-theory part of the following lemma, which is all we will use (the algebraic part is in any case not difficult).

\begin{celem}\label{celem}
Up to non-canonical isomorphism, $\C[X]$, $C^*(X)$ and $C^*_{max}(X)$ do not depend on the choice of dense subspace $Z$, and moreover only depend on $X$ itself up to coarse equivalence.  Up to canonical isomorphism, the $K$-theory groups $K_*(C^*(X))$ and $K_*(C^*_{max}(X))$ do not depend on the choice of $Z$, and moreover only depend on $X$ itself up to coarse equivalence.  \qed
\end{celem}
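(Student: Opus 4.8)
The plan is to deduce every assertion from a single construction: \emph{covering isometries} for coarse maps, following \cite{Higson:1993th}. Given proper metric spaces $X,Y$ with fixed countable dense subsets $Z_X\subseteq X$, $Z_Y\subseteq Y$ and a coarse embedding $f\colon X\to Y$ with control functions $\rho_\pm$, one builds an isometry $V\colon l^2(Z_X,\Hi)\to l^2(Z_Y,\Hi)$ which \emph{covers} $f$, meaning there is $R>0$ with matrix entry $V_{y,x}=0$ whenever $d_Y(y,f(x))>R$. The elementary point is that if $V$ covers $f$ then $\mathrm{Ad}_V\colon T\mapsto VTV^*$ is a $*$-homomorphism (using $V^*V=1$) sending $\C[X]$ into $\C[Y]$: each matrix entry of $VTV^*$ is a (finite, resp.\ norm-convergent) sum of products of elements of $\mathcal{K}(\Hi)$, hence lies in $\mathcal{K}(\Hi)$; the local-finiteness condition for $VTV^*$ follows from properness of $f$ (that is, from $\rho_-$); and one has the propagation bound $\mathrm{prop}(VTV^*)\le\rho_+(\mathrm{prop}(T))+2R$. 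Thus $\mathrm{Ad}_V$ extends to $C^*(X)\to C^*(Y)$; and since it carries $\C[X]$ into the $C^*$-algebra $C^*_{max}(Y)$—on which every element of $\C[X]$ automatically has norm at most its maximal norm—it also extends to $C^*_{max}(X)\to C^*_{max}(Y)$.

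For existence, one partitions $Z_Y=\bigsqcup_{x\in Z_X}Y_x$ with each $Y_x$ contained in a fixed bounded neighbourhood of $f(x)$ (assign $z\in Z_Y$ to a point of $Z_X$ near $g(z)$ for a coarse inverse $g$, and use density of $Z_Y$ to repair empty fibres), then use $\dim\Hi=\infty$ to choose isometries $\Hi\to l^2(Y_x,\Hi)$ and assemble them into $V$. When $f$ is a \emph{coarse equivalence} a standard bookkeeping argument—using bounded geometry of $X$ and $Y$ together with $\Hi\cong l^2(\N,\Hi)$—lets one arrange that $V$ is in fact a \emph{unitary} whose adjoint covers a coarse inverse of $f$; then $\mathrm{Ad}_V$ is a $*$-isomorphism $C^*(X)\to C^*(Y)$ with inverse $\mathrm{Ad}_{V^*}$, and likewise on the algebraic and maximal algebras (note that bounded geometry, hence the very definability of $C^*_{max}$, is itself coarsely invariant). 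Taking $f=\mathrm{id}_X$ with differing dense subsets gives independence of the choice of $Z$; taking a genuine coarse equivalence $f\colon X\to Y$ gives coarse invariance of $\C[X]$, $C^*(X)$ and $C^*_{max}(X)$ up to (non-canonical, since $V$ was chosen) $*$-isomorphism.

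It remains to see that the induced map on $K$-theory is canonical, i.e.\ independent of $V$. If $V_0,V_1$ both cover $f$, form $V_0\oplus V_1\colon l^2(Z_X,\Hi)\to l^2(Z_Y,\Hi)\oplus l^2(Z_Y,\Hi)$; identifying the doubled module with $l^2(Z_Y,\Hi)$ by a propagation-zero unitary (via $\Hi\cong\Hi\oplus\Hi$) turns $V_0$ and $V_1$ into two isometries conjugate by a norm-continuous path of finite-propagation (indeed zero-propagation) unitaries, namely the usual rotation between the two summands. Conjugation by a finite-propagation unitary is an inner automorphism of $C^*(Y)$ (resp.\ $C^*_{max}(Y)$), hence induces the identity on $K$-theory, and a norm-continuous path of automorphisms induces the same map on $K$-theory at its two endpoints; hence $(\mathrm{Ad}_{V_0})_*=(\mathrm{Ad}_{V_1})_*$. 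Combined with functoriality under composition (composing covering isometries) and the fact that $\mathrm{id}_X$ is covered by a finite-propagation unitary inducing the identity, this shows $f\mapsto(\mathrm{Ad}_V)_*$ descends to a map depending only on the coarse class of $f$, so a coarse equivalence induces a canonical isomorphism on $K_*(C^*(-))$ and, by the identical argument, on $K_*(C^*_{max}(-))$.

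I expect the main obstacle to be the bookkeeping needed to upgrade a covering isometry to a covering \emph{unitary} for an arbitrary coarse equivalence—this is where bounded geometry and $\dim\Hi=\infty$ are genuinely used, and where one must be careful about empty or infinite fibres of the chosen assignment $Z_Y\to Z_X$—together with the verification that distinct covers induce the same $K$-theory map. That last "rotation homotopy" argument is the conceptual heart of the statement and the precise reason the $K$-theory isomorphism, unlike the bare $*$-isomorphism of algebras, is canonical; the remaining verifications (the propagation and compactness estimates, extension to the maximal completion, elementary functoriality) are routine.
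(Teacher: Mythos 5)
The paper does not actually prove this lemma: it points to \cite[Section 4, Lemma 3]{Higson:1993th} for the $K$-theory part and declares the algebraic part easy, and your covering-isometry argument is precisely the standard proof from that reference, so in substance you and the paper are relying on the same argument. Two spots in your write-up need tightening, though. First, the ``usual rotation between the two summands'' does not do what you want: the rotation unitary carries $\iota_1 V_0$ to $\iota_2 V_0$, not to $\iota_2 V_1$. The correct mechanism is either the path of isometries $W_t=\cos(t)\,\iota_1V_0+\sin(t)\,\iota_2V_1$ (each $W_t$ is an isometry because the two summands are orthogonal, and each still covers $f$, so $\mathrm{Ad}_{W_t}$ is a norm-continuous homotopy of $*$-homomorphisms into $M_2(C^*(Y))$), or conjugation by the finite-propagation unitary $\bigl(\begin{smallmatrix}V_1V_0^*&1-V_1V_1^*\\1-V_0V_0^*&V_0V_1^*\end{smallmatrix}\bigr)$; note also that such a unitary is a \emph{multiplier} of $C^*(Y)$ rather than an element of it, so ``inner automorphism of $C^*(Y)$'' should read ``inner in the multiplier algebra'', which still induces the identity on $K$-theory.

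Second, condition (2) of Definition \ref{roe} requires the support of $T$ to be a locally finite subset of $Z\times Z$, and this is \emph{not} a consequence of properness of $f$ alone: a covering \emph{unitary} must map onto each fibre $l^2(Y_x,\Hi)$, so when $Y_x$ is infinite (as happens for the Rips complexes, where $Z$ is dense and bounded sets meet $Z$ in infinite sets) a single nonzero block of $T$ acquires infinitely many nonzero matrix entries inside a bounded set. This is harmless for $C^*(X)$ and $C^*_{max}(X)$, since operators satisfying condition (2) remain norm-dense among locally compact finite-propagation operators, and for the isometry (hence $K$-theory) statements one can simply choose $V$ with a single nonzero entry in each column; but the assertion that the unitary conjugation preserves $\C[X]$ itself needs this extra care.
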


In the presence of a discrete group action, the $K$-theory groups of the following algebras are receptacles for equivariant higher indices of elliptic operators. 

\begin{equiroe}\label{equiroe}
Let $X$ be a proper metric space, and $\Gamma$ a countable discrete group acting freely and properly on $X$ by isometries.  Fix a $\Gamma$-invariant countable dense subset $Z\subseteq X$, and use this to define $\C[X]$ as in Definition \ref{roe} above. The \emph{equivariant algebraic Roe algebra of $X$}, denoted $\C[X]^\Gamma$ is the $*$-subalgebra of $\C[X]$ consisting of $\Gamma$ invariant matrices $(T_{x,y})$, i.e.\ those that satisfy $T_{gx,gy}=T_{x,y}$ for all $g\in\Gamma$ and $x,y\in Z$.  $\C[X]^\Ga$ does not depend on the choice of $Z$ up to non-canonical isomorphism.

The \emph{equivariant Roe algebra of $X$}, denoted $C^*(X)^\Gamma$, is the completion of $\C[X]^\Gamma$ for its natural representation on $l^2(Z,\Hi)$.

Say now that $X$ has bounded geometry.  The \emph{maximal equivariant Roe algebra of $X$}, denoted $C^*_{max}(X)^\Gamma$, is the completion of $\C[X]^\Gamma$ for the norm 
$$
\|T\|_{max}=\sup\{\|\pi(T)\|_{\mathcal{B}(\mathcal{H})}~|~\pi:\C[X]^\Gamma\to\mathcal{B}(\mathcal{H}) \text{ a $*$-representation}\}.
$$
\end{equiroe}
\noindent
Note that, despite the notation, $C^*(X)^\Gamma$ is not defined to consist of the $\Ga$-fixed points in $C^*(X)$: indeed, it can happen that $C^*_{max}(X)^\Ga$ is \emph{not} equal to the $\Ga$-fixed points in $C^*_{max}(X)$; it is suspected that this sort of phenomenon can also occur for $C^*(X)^\Ga$, but no examples are known.  The assumption that the $\Ga$ action on $X$ is free is not really necessary, but in this case the `correct' definition of $\C[X]^\Ga$ is a little more complicated, and the free case is all we need.

Recall now that if $\Ga$ is a discrete group, then its \emph{group algebra} $\C[\Ga]$ is the $*$-algebra of all finite formal linear combinations $\sum_{g\in\Ga}\lambda_gu_g$, where $\lambda_g\in \C$ and the $u_g$ satisfy $u_gu_h=u_{gh}$ and $u_g^*=u_{g^{-1}}$.  The \emph{reduced group $C^*$-algebra}, $C^*_r(\Ga)$, is the completion of $\C[\Ga]$ for its natural representation on $l^2(\Ga)$ by left shifts, and the \emph{maximal group $C^*$-algebra}, $C^*_{max}(\Ga)$, is the completion of $\C[\Ga]$ for the norm coming from the supremum over all $*$-representations. 
The equivariant Roe algebra is related to the reduced group $C^*$-algebra by the following well-known lemma; see for example \cite[Lemma 5.14]{Roe:1996dn}. 

\begin{morlem}\label{morlem}
Say $\Gamma$ acts properly, freely and cocompactly by isometries on a proper metric space $X$.  Let $Z\subseteq X$ be the countable, dense, $\Ga$-invariant subset used to define $\C[X]^\Ga$.  Let $D\subseteq Z$ be a precompact fundamental domain for the $\Ga$-action, by which we mean that each $\Ga$ orbit contains precisely one element of $D$, and that the closure of $D$ in $X$ is compact.  

Then there is an $*$-isomorphism 
$$
\psi_D:C^*(X)^\Ga\to C_r^*(\Ga)\otimes\mathcal{K}(l^2(D,\Hi)).
$$
Moreover, the isomorphism on $K$-theory induced by this $*$-isomorphism is independent of the choice of $D$.
\end{morlem}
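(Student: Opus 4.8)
The plan is to exhibit the isomorphism $\psi_D$ explicitly on the level of the algebraic Roe algebra, check it extends to the $C^*$-completions, and finally establish independence of the choice of fundamental domain on $K$-theory. Since $X$ has a free, proper, cocompact, isometric $\Ga$-action, fix a precompact fundamental domain $D\subseteq Z$. The key structural observation is that every element $z\in Z$ can be written uniquely as $z=g\cdot d$ with $g\in\Ga$, $d\in D$; this gives a bijection $Z\cong \Ga\times D$, hence an identification of Hilbert spaces $l^2(Z,\Hi)\cong l^2(\Ga)\otimes l^2(D,\Hi)$. Under this identification, I first observe that a $\Ga$-invariant matrix $T=(T_{z,z'})\in \C[X]^\Ga$ is completely determined by its entries $T_{g\cdot d, d'}$ for $g\in\Ga$ and $d,d'\in D$, because of the relation $T_{gz,gz'}=T_{z,z'}$; moreover the finite-propagation and local-compactness conditions, combined with precompactness of $D$, force that for each fixed $d,d'$ only finitely many $g$ give a nonzero block, and each block lies in $\mathcal{K}(\Hi)$. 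I would therefore \emph{define} $\psi_D(T)$ to be the operator on $l^2(\Ga)\otimes l^2(D,\Hi)$ with $(g,h)$-matrix entry (valued in $\mathcal{B}(l^2(D,\Hi))$) given by the block matrix $(T_{g\cdot d,\,h\cdot d'})_{d,d'\in D}$, and check directly that $\psi_D(T)$ lies in $C^*_r(\Ga)\otimes\mathcal{K}(l^2(D,\Hi))$: indeed, writing $T=\sum_{g\in\Ga} u_g\otimes A_g$ where $A_g\in \mathcal{K}(l^2(D,\Hi))$ has matrix entries $(A_g)_{d,d'} = T_{g\cdot d, d'}$, this is a finite sum (by the propagation bound together with $D$ precompact), so $\psi_D(T)\in \C[\Ga]\otimes_{\mathrm{alg}}\mathcal{K}(l^2(D,\Hi))$.

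Next I would verify that $\psi_D$ is a $*$-homomorphism on $\C[X]^\Ga$: multiplicativity is the statement that convolution of the $\Ga$-indexed block matrices corresponds to the product in $\C[\Ga]\otimes\mathcal{K}$, which is a direct bookkeeping computation using $u_gu_h=u_{gh}$, and the $*$-compatibility follows from $T_{z,z'}^*=T_{z',z}$ together with $u_g^*=u_{g^{-1}}$. Injectivity is immediate since $T$ is recoverable from its blocks, and surjectivity onto $\C[\Ga]\otimes_{\mathrm{alg}}\mathcal{K}(l^2(D,\Hi))$ follows because any finite sum $\sum_g u_g\otimes A_g$ with $A_g$ finite-rank (hence compact) pulls back to a $\Ga$-invariant finite-propagation locally compact operator. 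Thus $\psi_D:\C[X]^\Ga\to \C[\Ga]\otimes_{\mathrm{alg}}\mathcal{K}(l^2(D,\Hi))$ is a $*$-isomorphism. To pass to the closures, I note that the natural representation of $\C[X]^\Ga$ on $l^2(Z,\Hi)$ is, under the unitary $l^2(Z,\Hi)\cong l^2(\Ga)\otimes l^2(D,\Hi)$, intertwined with the representation of $\C[\Ga]\otimes_{\mathrm{alg}}\mathcal{K}(l^2(D,\Hi))$ given by the left regular representation of $\Ga$ tensored with the identity representation of $\mathcal{K}(l^2(D,\Hi))$; since the latter representation is (unitarily equivalent to a multiple of) the defining representation whose closure is $C^*_r(\Ga)\otimes\mathcal{K}(l^2(D,\Hi))$, the map $\psi_D$ is isometric for the relevant norms and so extends to a $*$-isomorphism $C^*(X)^\Ga\to C^*_r(\Ga)\otimes\mathcal{K}(l^2(D,\Hi))$.

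Finally, for independence of the induced map on $K$-theory: given two precompact fundamental domains $D,D'$, both $\psi_D$ and $\psi_{D'}$ land in stable algebras $C^*_r(\Ga)\otimes\mathcal{K}$, and $\psi_{D'}\circ\psi_D^{-1}$ is an automorphism of $C^*_r(\Ga)\otimes\mathcal{K}(l^2(D,\Hi))$ (after identifying $\mathcal{K}(l^2(D,\Hi))\cong\mathcal{K}(l^2(D',\Hi))$, both being $\mathcal{K}(\Hi)$ up to the chosen countable bases) which is implemented by conjugation by a unitary in the multiplier algebra arising from the bijection $\Ga\times D\cong Z\cong \Ga\times D'$; such an inner automorphism acts trivially on $K$-theory. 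More conceptually, one can invoke that any two such isomorphisms are related by an ``amplification/conjugation'' and appeal to the standard fact that the $K$-theory of $A\otimes\mathcal{K}$ is canonically $K_*(A)$ independent of auxiliary stable choices (cf.\ Lemma \ref{celem} for the analogous statement for the Roe algebra itself). I expect the main obstacle to be purely organizational rather than conceptual: carefully setting up the identifications so that the bounded-geometry/precompactness hypotheses are used exactly where needed (to guarantee the block decomposition $T=\sum_g u_g\otimes A_g$ is a \emph{finite} sum with \emph{compact} coefficients), and verifying that the natural representation really does become the external tensor product of the regular representation with the standard one — the cocompactness hypothesis is what makes $D$ precompact and hence $l^2(D,\Hi)$ the right ``multiplicity space.''
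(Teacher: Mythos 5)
Your proposal is correct and follows essentially the same route as the paper: decompose $Z\cong\Ga\times D$, send $T$ to $\sum_g u_g\otimes A_g$ with $A_g$ read off from the matrix entries over the fundamental domain, extend to completions via the spatial identification $l^2(Z,\Hi)\cong l^2(\Ga)\otimes l^2(D,\Hi)$, and note that changing $D$ conjugates by a unitary multiplier, hence acts trivially on $K$-theory. The only nitpick is that the algebraic image is $\C[\Ga]$ tensored with the compacts having \emph{finitely many nonzero matrix entries} over $D$ (the paper's $\mathcal{K}_f$), not all finite-rank operators, since an arbitrary finite-rank $A_g$ need not pull back to a locally compact operator in the sense of Definition \ref{roe} when $D$ is infinite; this does not affect the argument as that subalgebra is still dense.
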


As is again well-known, in the situation of the lemma there is actually a canonical Morita equivalence between $C^*(X)^\Ga$ and $C^*_r(\Ga)$: see \cite[Lemma 2.3]{Roe:2002nx}.  The isomorphism above seems more useful for computations, however.

\begin{proof}
Let $\mathcal{K}_f(l^2(D,\Hi))$ be the dense $*$-subalgebra of $\mathcal{K}(l^2(D,\Hi))$ such that if $K\in \mathcal{K}_f(l^2(D,\Hi))$ is written as a matrix $(K_{x,y})_{x,y\in D}$, then only finitely many entries are non-zero.  Let $T$ be an element of $\C[X]^\Ga$, and for each $g\in\Ga$ define an element $T^{(g)}$ of $\mathcal{K}_f(l^2(D,\Hi))$ by the matrix formula
$$
T^{(g)}_{x,y}:=T_{x,gy} ~~\text{ for all $x,y\in D$}.
$$
Define now a $*$-homomorphism
$$
\psi_D:\C[X]^\Ga\to \C[\Ga]\odot \mathcal{K}_f(l^2(D,\Hi)),
$$
where the right hand side is the algebraic tensor product of the group algebra $\C[\Ga]$ and $\mathcal{K}_f(l^2(D,\Hi))$, by the formula
$$
T\mapsto \sum_{g\in\Ga}u_g\odot T^{(g)};
$$
using finite propagation of $T$, only finitely many of the operators $T^{(g)}$ are non-zero, so this makes sense.  It is not hard to check that $\psi_D$ is in fact a $*$-isomorphism.  Moreover, if $\psi_D$ is used to identify these two $*$-algebras then with respect to the isomorphism $l^2(Z,\Hi)\cong l^2(\Ga)\otimes l^2(D,\Hi)$ the representation of $\C[X]^\Ga$ on $l^2(Z,\Hi)$ corresponds to the natural one of $\C[\Ga]\odot \mathcal{K}_f(l^2(D,\Hi))$ on $l^2(\Ga)\otimes l^2(D,\Hi)$.  This shows that $\psi_D$ extends to an isomorphism of $C^*$-algebras as claimed.

The $K$-theory statement follows as any two such isomorphisms $\psi_D,\psi_D'$ differ by conjugation by a unitary multiplier of $C^*_r(\Ga)\otimes\mathcal{K}$, and this induces the identity map on $K$-theory.
\end{proof}

We now specialize to a space of graphs $X=\sqcup G_n$ as in Definition \ref{sg} above, and a covering sequence $\widetilde{X}=(\wgn)_{n\in\N}$ as in Definition \ref{cc}.  For each $n$, let $\pi_n:\wgn\to G_n$ denote the corresponding covering map, and $\Gamma_n$ the group of deck transformations.  The following lemma is a generalization of a fact from \cite{Chen:2008so}.

\begin{philem}\label{philem}
Say $\widetilde{X}$ is an asymptotically faithful covering sequence for $X$.  Then there exists a canonical $*$-homomorphism
$$
\phi:\C[X]\to \frac{\prod\C[\wgn]^{\Gamma_n}}{\oplus\C[\wgn]^{\Gamma_n}}.
$$ 
\end{philem}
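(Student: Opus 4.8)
The plan is to define $\phi$ by ``lifting'' finite‑propagation operators on $X$, block by block, through the covering maps $\pi_n\colon\wgn\to G_n$, using asymptotic faithfulness to make this well behaved once $n$ is large. Given $T=(T_{x,y})\in\C[X]$ with $\text{prop}(T)=:\rho$, let $P_n$ be the projection of $l^2(X,\Hi)$ onto $l^2(G_n,\Hi)$ and put $T_n:=P_nTP_n$; as $G_n$ is finite, $\C[G_n]=M_{|G_n|}(\mathcal K)$, and $T_n\in\C[G_n]$ with $\|T_n\|\le\|T\|$ and $\text{prop}(T_n)\le\rho$. By asymptotic faithfulness there is $N=N(\rho)$ so that $\pi_n$ is a $(2\rho+1)$‑metric cover whenever $n\ge N$; for such $n$ I define $\widetilde{T_n}$ on $l^2(\wgn,\Hi)$ by
\[
(\widetilde{T_n})_{\tilde x,\tilde y}:=(T_n)_{\pi_n(\tilde x),\pi_n(\tilde y)}\ \text{ if }d(\tilde x,\tilde y)\le\rho,\qquad (\widetilde{T_n})_{\tilde x,\tilde y}:=0\ \text{ otherwise,}
\]
and set $\widetilde{T_n}:=0$ for $n<N$. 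That $\widetilde{T_n}\in\C[\wgn]^{\Ga_n}$ is then routine: it has propagation $\le\rho$; its entries are entries of $T$, hence in $\mathcal K$; the finiteness condition of Definition \ref{roe}(2) is automatic since bounded subsets of the locally finite graph $\wgn$ are finite; it is bounded because it is band‑limited with bounded entries over a locally finite graph; and it is $\Ga_n$‑invariant because $\pi_n\circ\gamma=\pi_n$ and each $\gamma\in\Ga_n$ is an isometry. I will then put $\phi(T):=[(\widetilde{T_n})_{n\in\N}]$, the class of the sequence in the quotient.

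The one genuinely analytic point — and the main obstacle — is that $(\widetilde{T_n})_n$ lies in $\prod_n\C[\wgn]^{\Ga_n}$, i.e.\ that $\sup_n\|\widetilde{T_n}\|<\infty$; one expects the clean bound $\|\widetilde{T_n}\|\le\|T_n\|\le\|T\|$ (a constant $C_\rho$ in place of $1$, independent of $n$, would already suffice). This is the generalization of the fact from \cite{Chen:2008so} referred to in the statement. The idea is that the $(2\rho+1)$‑metric cover property forces $\widetilde{T_n}$ to agree, on each ball $B(\tilde x,2\rho+1)$, with $T_n$ transported by the local isometry $\pi_n\colon B(\tilde x,2\rho+1)\xrightarrow{\ \cong\ }B(\pi_n(\tilde x),2\rho+1)$; combining this with the observation that $\widetilde{T_n}$ enlarges supports by at most $\rho$, and a partition of $l^2(\wgn,\Hi)$ into pieces supported in $\rho$‑balls, one transfers the norm estimate from $T_n$ to $\widetilde{T_n}$. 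Everything after this is bookkeeping with the matrix formulas.

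Granting the bound, $\phi$ is well defined and canonical: the cutoff may be replaced by any value $\ge\text{prop}(T)$ without changing $\widetilde{T_n}$ once $n$ is large, since faithfulness makes $d(\tilde x,\tilde y)>\text{prop}(T_n)$ imply $d(\pi_n\tilde x,\pi_n\tilde y)>\text{prop}(T_n)$ and hence $(T_n)_{\pi_n\tilde x,\pi_n\tilde y}=0$; the threshold $N$ affects only finitely many $\widetilde{T_n}$ and hence not the class in the quotient; and $Z=X$ is forced since $X$ is uniformly discrete, so there are no further choices. Linearity follows from $(\alpha S+\beta T)_n=\alpha S_n+\beta T_n$ (pass to a common cutoff so that lifting is literally linear), and the $*$‑property from $(T^*)_n=(T_n)^*$ together with $d(\tilde x,\tilde y)=d(\tilde y,\tilde x)$, which gives $\widetilde{(T^*)_n}=(\widetilde{T_n})^*$.

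Finally, multiplicativity: for $S,T\in\C[X]$ with propagations $\rho_S,\rho_T$, I claim $\widetilde{(ST)_n}=\widetilde{S_n}\,\widetilde{T_n}$ for all large $n$, whence $\phi(ST)=\phi(S)\phi(T)$. First, $(ST)_n-S_nT_n=\sum_{m\neq n}P_nSP_mTP_n$, and $P_nSP_m=0$ as soon as $d(G_n,G_m)>\rho_S$; by the second condition of Definition \ref{sg} this holds for every $m\neq n$ once $n$ is large, so $(ST)_n=S_nT_n$ there. Second, once $\pi_n$ is a $\bigl(2(\rho_S+\rho_T)+1\bigr)$‑metric cover, the $(\tilde x,\tilde z)$‑entry of $\widetilde{S_n}\,\widetilde{T_n}$ is a sum over $\tilde y\in B(\tilde x,\rho_S)$, all of which lie with $\tilde x,\tilde z$ inside $B(\tilde x,\rho_S+\rho_T)$; since $\pi_n$ restricts there to an isometric bijection onto $B(\pi_n\tilde x,\rho_S+\rho_T)$, this sum equals $\sum_{y'}(S_n)_{\pi_n\tilde x,y'}(T_n)_{y',\pi_n\tilde z}=(S_nT_n)_{\pi_n\tilde x,\pi_n\tilde z}$, which is the corresponding entry of $\widetilde{S_nT_n}$ — and both entries vanish when $d(\tilde x,\tilde z)>\rho_S+\rho_T$, because then $d(\pi_n\tilde x,\pi_n\tilde z)=d(\tilde x,\tilde z)>\text{prop}(S_nT_n)$. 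Thus $\phi$ is a $*$‑homomorphism, depending only on the data $X$ and $\widetilde X$.
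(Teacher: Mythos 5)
Your construction of $\phi$ is the same as the paper's: decompose $T$ into blocks $T_n$ once $n$ is large enough that the components $G_n$ are $\text{prop}(T)$-separated and $\pi_n$ is a metric cover on the relevant scale, lift each block entrywise with a cutoff at the propagation, and pass to the quotient to absorb the dependence on the threshold $N$. The checks you carry out --- $\Gamma_n$-invariance, independence of the cutoff, linearity, the $*$-property, and multiplicativity via the local isometry on balls of radius $\rho_S+\rho_T$ --- are exactly what the paper leaves to the reader, and your arguments for them are correct.

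The one place you go astray is the point you single out as ``the main obstacle.'' First, no uniform bound $\sup_n\|\widetilde{T_n}\|<\infty$ is required for this lemma: $\prod_n\C[\wgn]^{\Gamma_n}$ is the algebraic product of the $*$-algebras $\C[\wgn]^{\Gamma_n}$, so one only needs each $\widetilde{T_n}$ to lie in $\C[\wgn]^{\Gamma_n}$ individually, which is immediate ($\wgn$ covers the finite graph $G_n$, hence has vertex degrees bounded by those of $G_n$, so a finite-propagation operator with entries bounded by $\|T\|$ is bounded). Second, and more seriously, the ``clean bound'' $\|\widetilde{T_n}\|\leq\|T_n\|$ you expect is false: take $G_n$ an odd cycle of length $g$, $\wgn=\Z$, and $T_n=1-U$ with $U$ the cyclic shift; then $\|T_n\|=2\cos(\pi/2g)<2$, while the lift $1-\widetilde{U}$ has norm $2$ on $\ell^2(\Z)$. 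More to the point, a uniform bound $\|\widetilde{T_n}\|\leq C_\rho\|T_n\|$ is \emph{not} automatic for a general asymptotically faithful covering sequence --- obtaining such a bound is precisely the content of the uniform operator norm localization property, which is why the paper only extends $\phi$ to the reduced completion $C^*(X)$ under that extra hypothesis in Lemma \ref{onllem}, and why Lemma \ref{ghostlem} invokes it. If your partition-into-$\rho$-balls argument gave the norm transfer unconditionally, that hypothesis would be redundant. So your proof of the algebraic statement is complete once the spurious boundedness requirement on the product is dropped, but the norm transfer you defer to ``bookkeeping'' is not routine: it is exactly where the operator norm localization property enters later in the paper.
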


\begin{proof}
Let $(T_{x,y})_{x,y\in X}$ be an element of $\C[X]$, and let $S>0$ be such that $T_{x,y}=0$ whenever $d(x,y)>S$.  Let $N$ be such that for all $n,m\geq N$, $d_X(G_n,G_m)\geq 2S$ and $\pi_n:\wgn\to G_n$ is a $2S$-metric cover (such an $N$ exists by choice of the metric on $X$, and the asymptotic faithfulness property).  We may then write 
$$
T=T^{(0)}\oplus\prod_{n\geq N}T^{(n)},
$$
where $T^{(0)}$ is an element of $\mathcal{B}(l^2(G_0\sqcup\cdots\sqcup G_{N-1}))$ and each $T^{(n)}$ is an element of $\mathcal{B}(l^2(G_n))$.  For each $n\geq N$, define an operator $\widetilde{T^{(n)}}\in \C[\wgn]^{\Gamma_n}$ by
$$
\widetilde{T^{(n)}}_{x,y}=\left\{\begin{array}{ll} T^{(n)}_{\pi_n(x),\pi_n(y)} & d(x,y)\leq S \\ 0 & \text{otherwise}\end{array}\right.,
$$
and define $\phi(T)$ to be the image of $\prod_{n\geq N} \widetilde{T^{(n)}}$ under the inclusion-and-quotient map
$$
\prod_{n\geq N}\C[\wgn]^{\Gamma_n}\to \frac{\prod\C[\wgn]^{\Gamma_n}}{\oplus\C[\wgn]^{\Gamma_n}}
$$
(thus $\phi(T)$ does not depend on the choice of $N$).
It is not hard to check that $\phi$ is then a $*$-homomorphism as claimed.  
\end{proof}

\begin{maxcor}\label{maxcor}
If $A$ is any $C^*$-algebraic completion of 
$$
\frac{\prod_{n}\C[\wgn]^{\Gamma_n}}{\oplus_{n}\C[\wgn]^{\Gamma_n}},
$$
then there exists a (unique) $*$-homomorphism $\phi:C^*_{max}(X)\to A$ which extends $\phi$ as in the previous lemma. \qed
\end{maxcor}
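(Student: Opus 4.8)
The plan is to derive this immediately from the universal property built into the definition of $C^*_{max}(X)$. Write $Q:=\frac{\prod_n\C[\wgn]^{\Gamma_n}}{\oplus_n\C[\wgn]^{\Gamma_n}}$ for the quotient $*$-algebra, and let $\iota:Q\hookrightarrow A$ be the canonical injective $*$-homomorphism with dense range exhibiting $A$ as a $C^*$-completion of $Q$. Composing with the $*$-homomorphism $\phi:\C[X]\to Q$ of Lemma \ref{philem}, I obtain a $*$-homomorphism $\iota\circ\phi:\C[X]\to A$.

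The key step is to observe that $\iota\circ\phi$ is automatically contractive for the norm $\|\cdot\|_{max}$ on $\C[X]$. Indeed, fix a faithful nondegenerate representation $A\subseteq\mathcal{B}(\mathcal{H})$ on some Hilbert space $\mathcal{H}$; then $\iota\circ\phi:\C[X]\to\mathcal{B}(\mathcal{H})$ is a $*$-representation, so by the very definition of $\|T\|_{max}$ as a supremum over all such representations one gets $\|\phi(T)\|_A=\|\iota(\phi(T))\|_{\mathcal{B}(\mathcal{H})}\leq\|T\|_{max}$ for every $T\in\C[X]$. (As in Definition \ref{roe}, this presupposes that $X$ has bounded geometry, which we assume throughout, so that $C^*_{max}(X)$ is defined in the first place.)

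Since $\C[X]$ is by definition dense in $C^*_{max}(X)$ and $A$ is complete, the map $\iota\circ\phi$ extends uniquely to a bounded linear map $\phi:C^*_{max}(X)\to A$; because multiplication and the involution are continuous on $C^*$-algebras and $\phi$ is a $*$-homomorphism on the dense subalgebra $\C[X]$, the extension is again a $*$-homomorphism. Uniqueness follows from density of $\C[X]$ in $C^*_{max}(X)$ together with the Hausdorff property of $A$. I do not expect any genuine obstacle here: the only point requiring a moment's thought is the contractivity step, and that in turn reduces to the elementary observation that a $*$-homomorphism into a $C^*$-algebra becomes a Hilbert-space $*$-representation once one composes it with a faithful representation of the target.
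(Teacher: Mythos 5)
Your proposal is correct and is precisely the argument the paper has in mind: the corollary is stated with no written proof because it follows immediately from the universal property of $\|\cdot\|_{max}$ in exactly the way you describe (compose $\phi$ with a faithful representation of $A$ to get a $*$-representation of $\C[X]$, deduce contractivity, and extend by density). Nothing is missing.
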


We will also be interested in when this map extends to the non-maximal completion $C^*(X)$.  The following definition, due to Chen, Tessera, Wang and the second author \cite{Chen:2008so}, gives a sufficient condition.  The idea of using similar estimates in this context (arising from finite asymptotic dimension of linear type) is due to Higson \cite{Higson:1999km} (see also \cite[Section 9.4]{Roe:2003rw}, which exposits Higson's ideas).

\begin{onl}\label{onl}
Let $X$ be a uniformly discrete metric space.  $X$ is said to have the \emph{operator norm localization property} if there exists a constant $0<c\leq 1$ such that for all $R>0$ there exists $S_R>0$ such that if $T\in \C[X]$ is of propagation at most $R$ then there exists $\xi\in l^2(X,\Hi)$ with diam(supp($\xi))\leq S$ such that $$\|T\xi\|\geq c\|T\|.$$

A collection $\{X_i\}_{i\in I}$ of uniformly discrete metric spaces is said to have the \emph{uniform operator norm localization property} if the constants $c$ and $S_R$ can be chosen to hold across all of the $X_i$ simultaneously.  
\end{onl}

Many natural spaces have the operator norm localization property: for example, it is a theorem of Guentner, Tessera and the second author \cite{Guentner:2008gd} that all countable linear groups have this property.  The following simple lemma will be important in what follows.

\begin{treelem}\label{treelem}
Any family of trees $\{X_i\}$ (say for simplicity with vertices of at most countable valency) has the uniform operator norm localization property.
\end{treelem}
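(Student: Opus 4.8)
The plan is to reduce the statement to a uniform geometric decomposition of trees --- essentially the fact that trees have asymptotic dimension one with a linear control function that does not depend on the tree --- and then run the standard vector-localization argument (due in this context to Higson \cite{Higson:1999km}, and used by Chen--Tessera--Wang--Yu \cite{Chen:2008so}). Recall that to verify the uniform operator norm localization property for the family $\{X_i\}$ we must exhibit one constant $c\in(0,1]$ and, for each $R>0$, one radius $S_R>0$, such that for every $i$ and every $T\in\C[X_i]$ of propagation at most $R$ there is $\xi\in l^2(X_i,\Hi)$ with $\mathrm{diam}(\mathrm{supp}\,\xi)\leq S_R$ and $\|T\xi\|\geq c\|T\|$; crucially, $c$ and $S_R$ must not depend on $i$.

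\emph{Step 1 (a uniform two-colouring of a tree).} I claim that for every rooted tree $X$ and every $d>0$ there is a partition $X=\bigsqcup_\alpha Z_\alpha$ into pieces of diameter at most $3d$, together with a colouring of the pieces by $\{0,1\}$, such that any two \emph{distinct} pieces of the same colour are more than $d$ apart --- with all constants absolute. Fix a root $x_0$, put $\ell(x)=d(x,x_0)$, and slice $X$ into bands $B_m=\{x:md\leq\ell(x)<(m+1)d\}$, colouring $B_m$ by the parity of $m$. The piece $B_0$ is left intact; for $m\geq 1$ the pieces making up $B_m$ are the cones $P^m_v=\{x:v\preceq x,\ md\leq\ell(x)<(m+1)d\}$, where $v$ runs over the vertices at level $md-\lceil d/2\rceil$. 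Since every vertex of $B_m$ has a unique ancestor at that level, this is a partition; and since any two vertices of a piece meet at a common ancestor lying in that piece's cone, the common-ancestor formula in a tree gives each piece diameter at most $3d$. For the colour-separation there are two cases. Two distinct same-coloured pieces in different bands lie in $B_m,B_{m'}$ with $|m-m'|\geq 2$, so the level-difference between any point of one and any point of the other exceeds $d$, hence so does their distance. For two distinct pieces $P^m_v,P^m_{v'}$ of the \emph{same} band, $v\neq v'$ sit at a common level, so their deepest common ancestor lies strictly above level $md-\lceil d/2\rceil$; as every point of $P^m_v$ and of $P^m_{v'}$ has level at least $md$, the geodesic between them has length at least $2(\lceil d/2\rceil+1)>d$. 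The one delicate point of the lemma is exactly this last estimate, and it is the reason the cones defining the pieces of $B_m$ are rooted a half-band \emph{above} $B_m$ rather than at its top: a naive ``cones from level $md$'' partition fails, because two sibling vertices at level $md$ with a common parent at level $md-1$ would land in different same-coloured pieces only two apart.

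\emph{Step 2 (localization).} Fix $i$ and $T\in\C[X_i]$ of propagation at most $R$, and assume $\|T\|\neq 0$. Apply Step 1 with $d=2R+1$ to get a two-coloured partition $X_i=A\sqcup B$, $A=\bigsqcup_jA_j$, $B=\bigsqcup_kB_k$, with all pieces of diameter at most $6R+3$ and same-coloured pieces more than $2R$ apart. Pick a unit vector $\eta$ with $\|T\eta\|\geq\frac34\|T\|$ and split $\eta=\eta_A+\eta_B$ using the characteristic functions of $A$ and $B$; then $\max(\|T\eta_A\|,\|T\eta_B\|)\geq\frac12\|T\eta\|\geq\frac38\|T\|$, say $\|T\eta_A\|\geq\frac38\|T\|$. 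Writing $\eta_A=\sum_j\eta_{A_j}$ with $\eta_{A_j}$ supported in $A_j$, each $T\eta_{A_j}$ is supported in the $R$-neighbourhood of $A_j$, and these neighbourhoods are pairwise disjoint because the $A_j$ are more than $2R$ apart; hence $\sum_j\|T\eta_{A_j}\|^2=\|T\eta_A\|^2\geq\frac{9}{64}\|T\|^2$ while $\sum_j\|\eta_{A_j}\|^2\leq\|\eta\|^2=1$. A pigeonhole/averaging argument now produces $j$ with $\eta_{A_j}\neq 0$ and $\|T\eta_{A_j}\|\geq\frac38\|T\|\,\|\eta_{A_j}\|$; then $\xi:=\eta_{A_j}/\|\eta_{A_j}\|$ satisfies $\|T\xi\|\geq\frac38\|T\|$ and $\mathrm{supp}\,\xi\subseteq A_j$ has diameter at most $6R+3$. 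Thus the uniform operator norm localization property holds with $c=\frac38$ and $S_R=6R+3$; these depend only on $R$, so the choice is uniform over $\{X_i\}$ (the auxiliary Hilbert space $\Hi$ plays no role in any of this).

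The only genuine work is Step 1, and I expect it to be the main obstacle: although ``trees have asymptotic dimension one'' is classical, the construction needs exactly the care above with sibling vertices at band boundaries, and a too-hasty band-or-cone partition does not give the required metric separation. An acceptable alternative would be to cite the classical computation of the asymptotic dimension of trees (with its manifestly linear, tree-independent control function) together with the implication ``finite asymptotic dimension of linear type implies the operator norm localization property'' from \cite{Chen:2008so}, checking only that all constants there are uniform over the family; the argument above is simply the direct route.
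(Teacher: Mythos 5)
Your argument is correct, but it takes a different route from the paper, which disposes of the lemma in two lines: every tree with at most countable valency embeds isometrically into the single tree $X$ all of whose vertices have countably infinite valency, so the uniform statement for the family reduces to the statement for one space, and that is then obtained by citing the fact that $X$ has asymptotic dimension one together with the implication ``finite asymptotic dimension (of linear type) implies operator norm localization'' from \cite{Chen:2008so}. What you do instead is prove the geometric input by hand --- your Step 1 is exactly a verification that trees have asymptotic dimension one with a linear, tree-independent control function, and your banded-cone construction (with the cones rooted half a band above $B_m$ to handle siblings at band boundaries) is sound: the diameter bound $3d$, the $>d$ separation across bands of the same parity, and the $2\lceil d/2\rceil+2>d$ separation within a band via the common-ancestor formula all check out --- and then run the standard split-and-pigeonhole localization in Step 2, which is also correct and yields explicit constants $c=3/8$, $S_R=6R+3$. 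The trade-off is clear: the paper's proof is shorter and leans on an external reference plus the universal-tree trick to get uniformity for free, while yours is self-contained, makes the uniformity of the constants manifest without any embedding, and isolates the one genuinely delicate point (the naive ``cones from level $md$'' partition really does fail, for exactly the sibling reason you give). You even flag the paper's route as your ``acceptable alternative,'' so nothing is missing; your version is simply the unpacked form of the citation.
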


\begin{proof}
Let $X$ be the (unique) tree with all vertices of countably infinite valency.  Then all the $X_i$ embed isometrically in $X$.  The lemma now follows from the fact that $X$ has asymptotic dimension one (cf.\ \cite[Remark 3.2 and Proposition 4.1]{Chen:2008so}).
\end{proof}

The following lemma follows from the definition of the uniform operator norm localization property.

\begin{onllem}\label{onllem}
Let $X$, $\widetilde{X}$, $\phi$ be as in Lemma \ref{philem}, and assume that the sequence of spaces $(\wgn)_{n\in\N}$ has the uniform operator norm localization property.  Then $\phi$ extends to a $*$-homomorphism
$$
\phi:C^*(X)\to \frac{\prod_nC^*(\wgn)^{\Gamma_n}}{\oplus_n C^*(\wgn)^{\Gamma_n}}. \eqno \qed
$$ 
\end{onllem}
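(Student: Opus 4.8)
The plan is to realize $\phi$ as a \emph{bounded} $*$-homomorphism from the dense $*$-subalgebra $\C[X]$ of $C^*(X)$ into the $C^*$-algebra
$$
B:=\frac{\prod_n C^*(\wgn)^{\Gamma_n}}{\oplus_n C^*(\wgn)^{\Gamma_n}},
$$
and then to extend it by continuity. Composing the homomorphism $\phi$ of Lemma \ref{philem} with the map induced on quotients by the inclusions $\C[\wgn]^{\Gamma_n}\hookrightarrow C^*(\wgn)^{\Gamma_n}$ gives a $*$-homomorphism $\C[X]\to B$, provided one knows that for each $T\in\C[X]$ the sequence $(\widetilde{T^{(n)}})$ of Lemma \ref{philem} is uniformly norm-bounded, so that it really does define an element of the $C^*$-product upstairs. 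Once this map is shown to be bounded, continuity of the involution together with joint continuity of multiplication on bounded sets forces its norm-closure extension $C^*(X)\to B$ to again be a $*$-homomorphism, which is then automatically contractive. Since the norm in $B$ of the class of a bounded sequence $(S^{(n)})_n$ is $\limsup_n\|S^{(n)}\|$, the whole lemma comes down to the single estimate
$$
\limsup_{n\to\infty}\|\widetilde{T^{(n)}}\|_{C^*(\wgn)^{\Gamma_n}}\ \leq\ \tfrac{1}{c}\,\|T\|_{C^*(X)}\qquad\text{for every }T\in\C[X],
$$
where $0<c\leq 1$ is the constant from the uniform operator norm localization property of $(\wgn)_{n\in\N}$ (Definition \ref{onl}).

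To prove this estimate I would fix $T\in\C[X]$ of propagation at most $S$, and let $S'$ be the localization radius that Definition \ref{onl} associates to the propagation bound $S$. For $n$ large, $\widetilde{T^{(n)}}\in\C[\wgn]^{\Gamma_n}$ has propagation at most $S$, so the uniform operator norm localization property supplies $\xi\in l^2(\wgn,\Hi)$ whose support has diameter at most $S'$ and with $\|\widetilde{T^{(n)}}\xi\|\geq c\|\widetilde{T^{(n)}}\|$. Fixing $x_0\in\mathrm{supp}(\xi)$ we then have $\mathrm{supp}(\xi)\subseteq B(x_0,S')$, hence $\mathrm{supp}(\widetilde{T^{(n)}}\xi)\subseteq B(x_0,S'+S)$. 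The crucial point is that $S'$ depends only on $S$, not on $n$; so by asymptotic faithfulness of $\widetilde{X}$ (Definition \ref{cf}), for all sufficiently large $n$ the map $\pi_n$ is an $R$-metric cover with $R$ any fixed value we like depending only on $S$ (e.g.\ $R=S'+2S$), and in particular $\pi_n$ restricts to an isometric bijection of $B(x_0,S'+S)$ onto $B(\pi_n(x_0),S'+S)$ which, by the defining formula $\widetilde{T^{(n)}}_{x,y}=T^{(n)}_{\pi_n(x),\pi_n(y)}$ for $d(x,y)\leq S$, intertwines the restriction of $\widetilde{T^{(n)}}$ to that ball with the corresponding restriction of $T^{(n)}$. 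Transporting $\xi$ along $\pi_n$ to the vector $\eta\in l^2(G_n,\Hi)$ with $\eta(\pi_n(x))=\xi(x)$ for $x\in B(x_0,S')$ and $\eta=0$ elsewhere, one gets $\|\eta\|=\|\xi\|$ and $\|T^{(n)}\eta\|=\|\widetilde{T^{(n)}}\xi\|$, whence $\|T^{(n)}\|\geq c\|\widetilde{T^{(n)}}\|$. Finally, for $n$ large $T^{(n)}$ is a block of $T$ (exactly as in the proof of Lemma \ref{philem}), so $\|T^{(n)}\|\leq\|T\|_{C^*(X)}$, and the displayed estimate follows; the finitely many omitted terms are irrelevant since we work modulo $\oplus_n C^*(\wgn)^{\Gamma_n}$.

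The main obstacle is precisely this middle step. A priori, lifting an operator to a Galois cover can \emph{strictly} increase its norm, so there is no reason to expect $\|\widetilde{T^{(n)}}\|$ to be controlled by $\|T\|$ at all; the uniform operator norm localization property is what rescues the situation, since it lets one detect the norm of $\widetilde{T^{(n)}}$ using a vector supported on a set whose diameter is bounded independently of $n$ — and on such a small set asymptotic faithfulness says the cover is isometric and $\widetilde{T^{(n)}}$ is literally a copy of $T^{(n)}$. The care required is in bookkeeping the two scales, the localization radius $S'$ and the propagation $S$, so as to identify which metric-cover radius is needed, and in checking that the local bijection induced by $\pi_n$ really does intertwine the two operators on the relevant balls. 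Everything else — the extension by continuity, and the verification that this extension remains a $*$-homomorphism — is routine.
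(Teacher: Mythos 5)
Your argument is correct and is exactly the mechanism the paper intends: the lemma is stated without proof precisely because the estimate $\|\widetilde{T^{(n)}}\|\leq\frac{1}{c}\|T^{(n)}\|$, obtained by detecting the norm of the lift with a vector of uniformly bounded support diameter and transporting it down the cover, is the whole content (and the same device appears explicitly in the paper's proof of Lemma \ref{ghostlem}). Your bookkeeping of the two scales and the extension-by-continuity step are both sound.
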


\section{Assembly}\label{asssec}

In this section we discuss several versions of the \emph{Baum-Connes assembly map} \cite{Baum:2000ez,Baum:1988sf,Baum:1994pr}, in both the coarse and equivariant settings (a version of these assembly maps was also developed by Kasparov \cite[Section 6]{Kasparov:1988dw}, while the coarse geometric version appears first in work of Roe \cite[Section 6.6]{Roe:1993lq}).  We will give precise definitions where we need them, and refer to the literature otherwise.

Let $X$ be a proper metric space.  Let $K_*(X)$ be the locally finite $K$-homology group of $X$ as defined analytically by Kasparov \cite{Kasparov:1975ht}.  The following definition records our conventions regarding cycles for $K_0(X)$.  The $K_1$ case is similar, and will not be used explicitly in the paper -- see for example \cite[Chapter 8]{Higson:2000bs}.

\begin{k0hom}\label{k0hom}
Let $X$ be a proper metric space as above, and $Z$ be the countable dense subset used to define $\C[X]$.  Noting that the natural representation of $C_0(X)$ on $l^2(Z,\Hi)$ is ample (i.e.\ the representation is non-degenerate and no-non zero element of $C_0(X)$ acts as a compact operator), it follows from \cite[Lemma 8.4.1]{Higson:2000bs} that any element of $K_0(X)$ can be represented by a pair $(F,l^2(Z,\Hi))$ such that
\begin{itemize}
\item $F$ is a bounded operator on $l^2(Z,\Hi)$;
\item $f(1-F^*F)$, $f(1-FF^*)$ are compact operators for all $f\in C_0(X)$;
\item the commutator $[F,f]$ is a compact operator for all $f\in C_0(X)$.
\end{itemize}
The pair $(F,l^2(Z,\Hi))$ is called a \emph{cycle} for $K_0(X)$, and the corresponding equivalence class is denoted $[F,l^2(Z,\Hi)]$.  Often, the space $l^2(Z,\Hi)$ will be left implicit, and we will simply write $F$ for a cycle, and $[F]$ for the corresponding element of $K_0(X)$.
\end{k0hom}

We now define the assembly map in the even dimensional (`$K_0$') case, a process of `taking index' in an appropriately generalized sense.  The odd dimensional (`$K_1$') case can be treated similarly (see for example \cite{Higson:1995fv,Yu:1995bv}), but we will not need the explicit formulas in this paper.

\begin{k0a}\label{k0a}
Let $X$ be a proper metric space as above, and $Z$ be the countable dense subset of $X$ used to define $\C[X]$.  Let $(F_0,l^2(Z,\Hi))$ be a cycle for $K_0(X)$.  Let  $(U_i)_{i\in I}$ be a locally finite, uniformly bounded cover of $X$ and $(\phi_i)$ a subordinate partition of unity.  One readily checks that the operator 
$$
F:=\sum_{i\in I}\sqrt{\phi_i} F_0\sqrt{\phi_i}
$$
(convergence in the strong operator topology) also satisfies the conditions above, and is moreover a multiplier of $\C[X]\subseteq \mathcal{B}(l^2(Z,\Hi))$.  It follows that the matrix 
$$
I(F):=\begin{pmatrix} FF^*+(1-FF^*)FF^* & F(1-F^*F)+(1-FF^*)F(1-F^*F) \\ (1-F^*F)F^* & (1-F^*F) \end{pmatrix}
$$
is an idempotent in the $2\times2$ matrices over the unitzation of $\C[X]$, and is taken to the matrix 
$$
\begin{pmatrix} 1 & 0 \\ 0 & 0 \end{pmatrix}
$$
under the map on $2\times2$ matrices induced by the canonical map from the unitization  of $\C[X]$ to $\C$ (these matrices are part of a standard `index construction' is $K$-theory -- see for example \cite[Chapter 2]{Milnor:1971kl}).  We then define
\begin{align*}
\mu[F]=\mu[F,l^2(Z,\Hi)]: =[I(F)]-\begin{bmatrix} 1 & 0 \\ 0 & 0 \end{bmatrix}
\end{align*}
in $K_0(\C[X])$.  $\mu[F]$ defines an element of $K_0(C^*(X))$ via the inclusion $\C[X]\hookrightarrow C^*(X)$, and it is not hard to check the formula above gives a well-defined homomorphism
$$
\mu:K_0(X)\to K_0(C^*(X)).
$$
\end{k0a}

Combining this with a similar construction in the odd dimensional case defines a homomorphism
$$
\mu:K_*(X)\to K_*(C^*(X))
$$
called the \emph{assembly map}.  There is similarly a \emph{maximal assembly map}
$$
\mu:K_*(X)\to K_*(C_{max}^*(X)),
$$
defined using the inclusion $\C[X]\to C^*_{max}(X)$ -- see \cite[paragraph 4.6]{Gong:2008ja}.

\begin{rips}\label{rips}
Let $X$ be a proper, uniformly discrete metric space.  Let $R>0$.  The \emph{Rips complex of $X$ at scale $R$}, denoted $P_R(X)$, is the simplicial complex with vertex set $X$ and such that a finite set $\{x_1,...,x_n\}\subseteq X$ spans a simplex if and only if $d(x_i,x_j)\leq R$ for all $i,j=1,...,n$.  $P_R(X)$ is then equipped with the \emph{spherical metric} defined by identifying each $n$-simplex with the part of the $n$-sphere in the positive orthant, and equipping $P_R(X)$ with the associated length metric.
\end{rips}

For any $R$ there is a homomorphism $i_R:K_*(C^*(P_R(X)))\to K_*(C^*(X))$ (which need not be an isomorphism in general), coming from the functoriality of $K_*(C^*(\cdot))$ under coarse maps (see for example \cite[Section 4]{Higson:1993th}).  The \emph{coarse assembly map}
$$
\mu:\lim_{R\to\infty}K_*(P_R(X))\to K_*(C^*(X))
$$
is defined to be the limit of the compositions
$$
\xymatrix{ K_*(P_R(X)) \ar[r]^(0.45){\mu_R} & K_*(C^*(P_R(X))) \ar[r]^(0.55){i_R} & K_*(C^*(X)), }
$$
where $\mu_R:K_*(P_R(X))\to K_*(C^*(P_R(X)))$ is the assembly map for $P_R(X)$.  The \emph{coarse Baum-Connes conjecture for $X$} states that this map is an isomorphism, and the \emph{coarse Novikov conjecture for $X$} that it is injective.
There is similarly a \emph{maximal coarse assembly map}
$$
\mu:\lim_{R\to\infty}K_*(P_R(X))\to K_*(C^*_{max}(X));
$$
the \emph{maximal coarse Baum-Connes conjecture for $X$} states that this map is an isomorphism, and the \emph{maximal coarse Novikov conjecture for $X$} that it is an injective.
Note that one does not really need $X$ to be uniformly discrete for the above to make sense, but this is the only case we need, and it helps to simplify definitions slightly.

There is also an equivariant version of assembly.  Let $\Gamma$ be a countable discrete group acting freely and properly by isometries on a proper metric space $X$.  Then there exists a (maximal) \emph{equivariant assembly map}
$$
\mu_{\Gamma}:K_*^{\Gamma}(X)\to K_*(C^*_{(max)}(X)^\Gamma)
$$
where the left hand side is the $\Ga$-equivariant locally finite $K$-homology of $X$.  Note that one does not really need the $\Ga$ action to be free to define $\mu_\Ga$, but this assumption simplifies the definition of $\C[X]^\Ga$, and is satisfied in all the cases we need.  The definition is essentially the same as for the non-equivariant assembly map in Definition \ref{k0a}, except in this case one assumes that the operator $F_0$ is $\Ga$-equivariant (as one may for a proper action), and uses an equivariant partition of unity to form the cut-down $F$.

The equivariant $K$-homology group appearing above is related to non-equivariant $K$-homology by the following lemma, which we will need later.

\begin{indlem}\label{indlem}
Say $X$ is a compact metric space, and $\pi:\widetilde{X}\to X$ a Galois cover with covering group $\Ga$.  Let $Z$ be a countable dense $\Ga$-invariant subset of $X$, and $\widetilde{Z}=\pi^{-1}(Z)$.  Let $[F,l^2(Z,\Hi)]$ be a cycle for $K_0(X)$ as in Definiton \ref{k0a} (the $K_1$ case is similar).

Let $(U_i)_{i=1}^N$ be a finite cover of $X/\Ga$ such that if $\pi:X\to X/\Ga$ is the quotient map then for each $i$, $\pi^{-1}(U_i)$ is equivariantly homeomorphic to $U_i\times\Ga$; having chosen such an identification for each $i$, write $(U_{i,g})_{i=1,g\in\Ga}^N$ for the corresponding equivariant cover of $X$.  

Now, let $(\phi_i)_{i=1}^N$ be a partition of unity subordinate to $(U_i)_{i=1}^N$, and let $(\phi_{i,g})_{i=1,g\in\Ga}^N$ be the corresponding partition of unity subordinate to to the cover $(U_{i,g})$.  
Define for each $(x,y)\in\widetilde{Z}\times\widetilde{Z}$ an element of $\mathcal{B}(\Hi)$ by
\begin{equation}\label{lift}
\widetilde{F}_{x,y}:=\sum_{i=1}^N\sum_{g\in\Ga}\sqrt{\phi_{i,g}(x)}F_{\pi(x),\pi(y)}\sqrt{\phi_{i,g}(y)}.
\end{equation}

Then these expressions define the matrix coefficients of a bounded operator $\widetilde{F}$ on $l^2(\widetilde{Z},\Hi)$.  Moreover, the formula
$$
i^\Ga:[F,l^2(Z,\Hi)]\to [\widetilde{F},l^2(\widetilde{Z},\Hi)]
$$
gives rise to an isomorphism
$$
i^\Ga:K_*(X)\to K^\Gamma_*(\widetilde{X}).
$$
that does not depend on any of the choices involved in its definition. \qed
\end{indlem}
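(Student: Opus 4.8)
The plan is to verify the statement in three stages: first that the formula \eqref{lift} really does define a bounded operator; second that the map $i^\Ga$ on $K$-homology is well-defined and a homomorphism; and third that it is an isomorphism independent of the choices, by exhibiting it as (a special case of) the standard induction isomorphism from algebraic topology. For the first stage, I would observe that the partition of unity $(\phi_{i,g})$ is locally finite and uniformly bounded on $\widetilde{X}$: since $\widetilde{X}\to X/\Ga$ is a covering and the $U_i$ form a \emph{finite} cover of the compact base $X/\Ga$, each point of $\widetilde Z$ meets only boundedly many of the $U_{i,g}$. Thus for each fixed $x$ only finitely many terms $\sqrt{\phi_{i,g}(x)}$ are nonzero, and likewise for $y$; combined with boundedness of $F$ this shows $\widetilde F$ is the strong-operator-topology `cut-down' of (the pullback of) $F$ by a uniformly bounded locally finite partition of unity, hence bounded, exactly as in the construction of $F$ from $F_0$ in Definition \ref{k0a}.

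For the second stage, I would check that $\widetilde F$ is a cycle for $K_0^\Ga(\widetilde X)$: $\Ga$-equivariance is immediate from the way $\phi_{i,g}$ transforms under the deck action (multiplying $(i,g)$-index by a group element permutes the summands), and the conditions that $f(1-\widetilde F^*\widetilde F)$, $f(1-\widetilde F\widetilde F^*)$ and $[\widetilde F,f]$ are compact for $f\in C_0(\widetilde X)$ follow from the corresponding properties of $F$ downstairs together with properness of the $\Ga$-action and local finiteness of the cover — a function $f\in C_0(\widetilde X)$ has support meeting only finitely many $U_{i,g}$, so locally the computation reduces to the one on $X$. That the assignment descends to $K$-homology classes (respects homotopy, compact perturbation, and the various relations) is routine given that all these moves upstairs can be performed equivariantly and pushed through the same cut-down formula; the same bookkeeping shows $i^\Ga$ is additive. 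One also checks independence of the choice of cover $(U_i)$, trivialization, and partition of unity by a now-standard argument: two choices are connected by an equivariant homotopy of partitions of unity, and cut-downs by homotopic partitions of unity give operator-homotopic, hence $K$-homologous, cycles.

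For the third stage — which is the real content — I would identify $i^\Ga$ with the classical \emph{induction isomorphism} $K_*(X)=K_*(\widetilde X/\Ga)\xrightarrow{\ \sim\ }K_*^\Ga(\widetilde X)$. The cleanest route is to invoke the general principle that for a free, proper, cocompact $\Ga$-space $\widetilde X$ with quotient $X$, equivariant (locally finite) $K$-homology of $\widetilde X$ is naturally isomorphic to $K$-homology of $X$; see for example the discussion of induction in Kasparov's $KK$-theory or in \cite{Higson:2000bs}. Concretely, the Morita-type isomorphism $\psi_D:C^*(\widetilde X)^\Ga\cong C^*_r(\Ga)\otimes\mathcal K$ of Lemma \ref{morlem} is compatible with the assembly maps on both sides, and the formula \eqref{lift} is precisely the one that implements this compatibility at the level of cycles — it is the standard `spread out the operator over a fundamental domain using an equivariant partition of unity' construction. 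So what remains is the verification that my explicit formula agrees with the abstract induction map; this is the step I expect to be the main obstacle, since it requires carefully matching conventions (choice of dense sets $Z$, $\widetilde Z=\pi^{-1}(Z)$, the trivializations $\pi^{-1}(U_i)\cong U_i\times\Ga$) and checking that the diagram relating $\mu$, $\mu_\Ga$, $i^\Ga$ and $\psi_D$ commutes. Once that compatibility is in hand, bijectivity and choice-independence of $i^\Ga$ follow from the corresponding well-known properties of the abstract induction isomorphism.
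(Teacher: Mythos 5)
The paper gives no proof of Lemma \ref{indlem}: it is stated without proof, as an instance of the standard induction isomorphism $K_*(X)\cong K_*^\Ga(\widetilde{X})$ for a free, proper, cocompact action, followed only by an informal remark explaining that the formula in line \eqref{lift} cuts $F$ down by the partition of unity and lifts each piece. Your three-stage outline is correct and matches what the paper implicitly relies on; the only caveat is that your third stage, which carries all the real content (bijectivity and independence of choices), is itself an appeal to the classical induction isomorphism from the literature rather than a proof of it, so your argument ultimately rests on the same external fact the authors do.
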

\noindent
The formula in line \eqref{lift} above simply means that we take the operator $F$, cut it down using the partition of unity to get
$$
F'=\sum_{i=1}^k\sqrt{\phi_i}F\sqrt{\phi_i},
$$
and lift each `piece' $\sqrt{\phi_i}F\sqrt{\phi_i}$ to $U_i\times\Ga$ to define $\widetilde{F}$.  The precise formula is helpful in computations.

If $\Gamma$ acts freely, properly and cocompactly on a uniformly discrete metric space $X$, there is a (maximal) \emph{Baum-Connes assembly map}
\begin{equation}\label{bc1}
\mu_{\Gamma}:\lim_{R\to\infty}K_*^\Gamma(P_R(X))\to K_*(C_{(max)}^*(X)^\Gamma),
\end{equation}
defined analogously to the coarse assembly map above; it is also possible to define a version of this homomorphism when the action of $\Ga$ on $X$ is not free, but the definition is slightly more complicated, and we will not need this.  

The more usual definition of the Baum-Connes assembly map \cite{Baum:1994pr} is as a homomorphism  
\begin{equation}\label{bc2}
\mu_{BC}:K_*^{top}(\Gamma)\to K_*(C^*_{(r,max)}(\Gamma)),
\end{equation}
defined using Kasparov's equivariant $KK$-theory \cite{Kasparov:1988dw}.  The \emph{Baum-Connes conjecture for $\Ga$} states that the reduced version of this map is an isomorphism, and the   \emph{strong Novikov conjecture} that it is injective.  We will need the following important lemma: this was folklore for a long time, and a proof was provided by Roe \cite{Roe:2002nx}.

\begin{bclem}\label{bclem}
Say $\Ga$ acts freely, properly, and cocompactly on a proper uniformly discrete metric space $X$.  Then the (maximal) Baum-Connes assembly map $\mu_\Ga$ in line \eqref{bc1} above identifies naturally with the Baum-Connes assembly map from line \eqref{bc2} where we use Lemma \ref{morlem} to identify the right hand sides. \qed
\end{bclem}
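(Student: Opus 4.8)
The plan is to compare the two definitions of the Baum-Connes assembly map through their common origin in Kasparov's $KK$-theory, using the model from line \eqref{bc1} as a ``geometric'' realization. First I would recall that, since $\Ga$ acts freely, properly and cocompactly on $X$, the quotient $X/\Ga$ is a compact metric space and $X$ is a cocompact model for (part of) $E\Ga$; more precisely, the Rips complexes $P_R(X)$ form a directed system of $\Ga$-finite, $\Ga$-CW complexes with free action, and $\lim_{R\to\infty}K_*^\Ga(P_R(X))$ computes $K_*^{top}(\Ga)=\lim_{Y}KK_*^\Ga(C_0(Y),\C)$ over $\Ga$-compact subsets $Y$ of $E\Ga$. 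This identification is standard (see \cite{Baum:1994pr}); the key point is that the directed system $(P_R(X))_R$ is cofinal in the system of $\Ga$-compact subsets of a model for $E\Ga$, which follows from cocompactness together with the fact that any $\Ga$-compact subset of $E\Ga$ maps $\Ga$-equivariantly into some $P_R(X)$ by a $\Ga$-homotopy uniqueness argument.

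The second step is to match the targets. By Lemma \ref{morlem} there is a $*$-isomorphism $\psi_D: C^*(X)^\Ga \to C^*_r(\Ga)\otimes\mathcal{K}(l^2(D,\Hi))$, inducing a canonical isomorphism $K_*(C^*(X)^\Ga)\cong K_*(C^*_r(\Ga))$ that is independent of the fundamental domain $D$; the same works in the maximal case. So both assembly maps land (after this identification) in $K_*(C^*_{(r,max)}(\Ga))$, and it remains to check that the two maps $\lim_R K_*^\Ga(P_R(X))\to K_*(C^*_{(r,max)}(\Ga))$ agree. For this I would unwind the definition in Definition \ref{k0a} (equivariant version): given a $\Ga$-equivariant cycle $(F_0,l^2(Z,\Hi))$ over $P_R(X)$, one forms the cut-down $F=\sum_i\sqrt{\phi_{i}}F_0\sqrt{\phi_{i}}$ using a $\Ga$-invariant, locally finite, uniformly bounded partition of unity, observes that $F$ is a multiplier of $\C[P_R(X)]^\Ga$, and takes the index idempotent $I(F)$. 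Transporting $I(F)$ through $\psi_D$ gives an explicit idempotent over $C^*_r(\Ga)\otimes\mathcal{K}$, and one checks this represents exactly the image of the class of $(F_0,l^2(Z,\Hi))$ under the Kasparov-theoretic descent-and-Miscenko-line construction $\mu_{BC}$. Concretely, the Miscenko line bundle pulled back to $P_R(X)$, paired with the $K$-homology cycle $F_0$ via the Kasparov product, yields a projection in $C^*_r(\Ga)\otimes\mathcal{K}$ whose class is manifestly the same matrix-level index; this is precisely the folklore computation carried out in \cite{Roe:2002nx}, so I would cite that paper for the verification and only indicate the shape of the argument.

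I expect the main obstacle to be the bookkeeping in step two: making the identification between the ``matrix mechanics'' index construction of Definition \ref{k0a} and the abstract Kasparov product $[\ell]\otimes_{C_0(P_R(X))}[F_0]$ genuinely canonical, i.e.\ independent of all auxiliary choices (the partition of unity, the fundamental domain $D$, the Hilbert-module presentation of the Miscenko line). The cleanest route is to observe that both constructions are natural transformations between the same pair of functors on the category of $\Ga$-compact proper $\Ga$-spaces and $\Ga$-maps, that they agree on a generating class of examples (e.g.\ the trivial group, or a point, where both reduce to the obvious map $\Z=K_0(\mathrm{pt})\to K_0(\C)$), and that naturality plus the Mayer--Vietoris/suspension structure of $K$-homology forces them to agree everywhere; passing to the limit over $R$ then gives the lemma. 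Since \cite{Roe:2002nx} already carries this out, I would present the proof as: (a) identify the sources via cofinality of the Rips system in $E\Ga$; (b) identify the targets via Lemma \ref{morlem}; (c) invoke \cite[Lemma 2.3 and surrounding discussion]{Roe:2002nx} for the compatibility of the two index maps, checking that our conventions in Definition \ref{k0a} match theirs. The maximal case is identical, replacing $C^*_r(\Ga)$ by $C^*_{max}(\Ga)$ throughout and using that $\psi_D$ and all the index idempotents are defined at the level of the dense $*$-subalgebras, hence respect the maximal completions.
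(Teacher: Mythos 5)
The paper offers no proof of this lemma at all: it is stated as folklore with the verification deferred entirely to Roe \cite{Roe:2002nx}, and your proposal follows exactly the same route --- identify the sources via the Rips system, identify the targets via Lemma \ref{morlem}, and cite \cite{Roe:2002nx} for the compatibility of the ``matrix-level'' index construction with the Kasparov-theoretic one --- so it is consistent with (indeed more detailed than) the paper's treatment. One small imprecision: $K_*^{top}(\Ga)$ is defined over the universal \emph{proper} $\Ga$-space $\underline{E}\Ga$ rather than the free model $E\Ga$ (and the $\Ga$-action on $P_R(X)$ is proper but need not be free even when the action on $X$ is), though this is harmless in the paper's applications since the relevant groups $\Ga_n$ are free groups, hence torsion-free.
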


\section{Ghost operators}\label{ghostsec}

In this section we collect some facts about a class of `highly non-local' operators that exist in the Roe algebras $C^*(X)$ of certain spaces $X$.  These so-called \emph{ghost operators} were originally introduced by the second author (unpublished).  See for example \cite[Section 11.5.2]{Roe:2003rw} and \cite{Chen:2004bd,Chen:2005cy} for more information on this class of operators.

\begin{ghostdef}\label{ghostdef}
Let $C^*(X)$ be the Roe algebra of a proper metric space $X$, and $Z\subseteq X$ the countable subset of $X$ used to define $\C[X]$.
An operator $T\in C^*(X)$ is said to be a \emph{ghost} if for all $R,\epsilon>0$ there exists a bounded set $B\subseteq X$ such that if $\xi\in l^2(Z,\Hi)$ is supported in $B(x,R)$ for some $x\not\in B$ then $\|T\xi\|<\epsilon$. 
\end{ghostdef} 

\begin{gbgrem}\label{gbgrem}
If $X$ is uniformly discrete and of bounded geometry, $T$ is a ghost if and only if 
$$
\lim_{(x,y)\to\infty \text{ in } Z\times Z}T_{x,y}=0.
$$
\end{gbgrem}

Clearly a compact operator on $l^2(X)$ is a ghost, and if $X$ has \emph{property A}, the converse is true: see \cite[Proposition 11.43]{Roe:2003rw}, \cite{Chen:2004bd} and \cite[Section 4]{Chen:2005cy}.  Non-compact ghost operators can exist, however -- we give examples below.  They are very mysterious objects: non-compact ghost operators have a definite global existence (as non-compact), while simultaneously being `locally almost invisible'.  On the other hand, the coarse Baum-Connes conjecture predicts that the (a priori, global) $K$-theory group $K_*(C^*(X))$ can be modeled using the local information in the $K$-homology groups $K_*(P_R(X))$.  There is thus some tension between a class in $K_*(C^*(X))$ being represented by a non-compact ghost, and its being in the image of the coarse assembly map; we exploit this tension to get counterexamples to the surjectivity part of the coarse Baum-Connes conjecture in Section \ref{surjsec} below.  Ideas along these lines are originally due to Higson \cite{Higson:1999km}.

We will be interested in the following examples of ghost operators: in all cases the operators are infinite rank projections, so not compact.

\begin{ghostex}\label{ghostex}
\begin{enumerate}[(i)]
\item Let $X=\sqcup G_n$ be a space of graphs.  Let $\Delta_n$ be the graph Laplacian on $G_n$, and fix a rank one projection $q\in \mathcal{K}$.  Let  
$$\Delta:=\prod_n(\Delta_n\otimes q)$$
be the Laplacian on $l^2(X)$ tensored by the projection $q$; note that $\Delta$ is an element of $\C[X]$ (and in fact, of propagation one).  

Say now that $X$ is an expander, so that $\Delta\in \C[X]$ has spectrum contained in $\{0\}\cup [c,2]$ for some $c>0$.  Let
$$
p=\lim_{t\to\infty}e^{-t\Delta}
$$  
(the limit exists in the norm topology using the `spectral gap' of $\Delta$) be the spectral projection associated to $0\in\text{spectrum}(\Delta)$.  Then $p=\prod p^{(n)}\in C^*(X)$, where $p^{(n)}\in\mathcal{B}(l^2(G_n))$ is the projection onto the constant functions in $l^2(G_n)$.  It is not hard to see that $p^{(n)}$ is given by the formula
$$
p^{(n)}_{x,y}=\frac{1}{|G_n|}
$$
for all $n$ and $x,y\in G_n$; as $|G_n|\to\infty$ as $n\to\infty$ by assumption, $p$ is a ghost, at least in the bounded geometry case.  Note that $p$ is an infinite rank projection, however, so non-compact.  In the non-bounded geometry case, $p$ will still be a ghost if the diameter of $G_n$ increases suitably quickly with respect to the speed at which the vertex degrees of the $G_n$ increase, but this is not so important for us (cf.\ however Lemma \ref{kazdies} below).  We call $p$ the \emph{basic Kazhdan projection} associated to $X$.

\item Say $\Gamma$ is a finitely generated group equipped with the edge metric coming from the Cayley graph associated to some fixed finite generating set.  Assume that $(\Gamma_n)_{n\in\N}$ is a nested  sequence of finite index, normal subgroups such that $\cap \Gamma_n=\{e\}$ and with respect to which $\Ga$ has property $(\tau)$.  Let $X$ be the space of graphs $\sqcup \Ga/\Ga_n$, where each $\Ga/\Ga_n$ is given the Cayley graph structure associated to the image of the fixed generating set of $\Ga$.  Let 
$$\sigma:\Ga\to \mathcal{U}(\C^{\text{dim}(\sigma)})$$
be any finite dimensional irreducible representation of $\Gamma$ that factors through $\Ga/\Ga_N$ for some $N$ (which we assume is the smallest such), whence through $\Ga/\Ga_n$ for all $n\geq N$.  

Note that the group algebra $\C[\Ga]$ admits a faithful representation on $l^2(X)$ by right multiplication, and that this representation includes $\C[\Ga]$ as a subalgebra of $C^*(X)$.  Let $C^*_{X}(\Ga)$ be the closure of the group algebra in $C^*(X)$, and note that property $(\tau)$ implies that $\sigma$ is isolated in the spectrum of $C^*_{X}(\Ga)$ (this is true of any irreducible finite-dimensional representation that factors through some $\Ga/\Ga_n$, by essentially the same proof as in the property (T) case -- see \cite[Theorem 1.2.5]{Bekka:2000kx}).  Hence there exists a projection $p_{\sigma}\in C^*_{X}(\Gamma)\subseteq C^*(X)$ which has image the $\sigma$-isotypical component of $l^2(X)$, i.e. the $\Ga$-invariant subspace of $l^2(X)$ defined as the sum of all $\Ga$-invariant subspaces of $l^2(X)$ that are unitarily equivalent (as $\Ga$-representations) to $\sigma$. 

We then have that $p_\sigma=\oplus_{n\geq N} p^{(n)}_\sigma$, where $p^{(n)}_\sigma\in\mathcal{B}(l^2(\Gamma/\Gamma_n))$ is the projection onto the $\sigma$-isotypical component of $l^2(\Gamma/\Gamma_n)$.  Let $\chi_\sigma:\Gamma\to\C$ be the character associated to $\sigma$.  Letting $[g]\in\Gamma/\Gamma_n$ denote the image of $g\in\Gamma$, the matrix coefficients of $p^{(n)}_\sigma$ are given by
$$
(p^{(n)}_\sigma)_{[g],[h]}=\left\{\begin{array}{ll} 0 & n<N \\ \frac{\text{dim}(\sigma)}{|\Ga/\Ga_n|}\chi_\sigma(g^{-1}h) & n\geq N \end{array}\right.,
$$
as follows from basic facts in the representation theory of finite groups.  In particular $p_\sigma$ is a ghost operator, and also an infinite rank projection.  There will be countably infinitely many such $\sigma$s for any $X$ built out of a property $(\tau)$ group $\Ga$ as above; we call $p_\sigma\in C^*(X)$ the \emph{Kazhdan projection} associated to $\sigma$.  

Note that in this special case, the basic Kazhdan projection from part (i) above is the same as the Kazhdan projection $p_\textbf{1}$ associated to the trivial representation $\textbf{1}:\Ga\to\C$ as in this part.
\end{enumerate}
\end{ghostex}

It would be interesting to have other concrete examples of ghost operators, especially if they came from natural geometric (as opposed to representation theoretic) hypotheses. We leave finding others as a problem.

\begin{ghostprob}\label{ghostprob}
Find more natural geometric examples of ghost projections (or more general operators).  Elucidate the structure of the ghost operators (which form an ideal in $C^*(X)$); for this latter problem, cf.\ \cite{Chen:2004bd,Chen:2005cy,Wang:2007uf}.
\end{ghostprob}

The following lemma says that a suitable covering sequence of a space of graphs does not `see' any ghost operators.

\begin{ghostlem}\label{ghostlem}
Let $X=\sqcup G_n$ be a space of graphs.  Assume that $\widetilde{X}=(\wgn)_{n\in\N}$ is an asymptotically faithful covering sequence for $X=\sqcup G_n$, such that $\widetilde{X}$ also has the uniform operator norm localization property.  Let  
$$
\phi:C^*(X)\to \frac{\prod_nC^*(\wgn)^{\Gamma_n}}{\oplus_n C^*(\wgn)^{\Gamma_n}}
$$
be the map from Lemma \ref{onllem}.  Then $\phi(T^G)=0$ for any ghost operator $T^G$.
\end{ghostlem}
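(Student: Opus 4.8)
The plan is to reduce the claim to a pointwise estimate on matrix coefficients, using the explicit description of $\phi$ from Lemma~\ref{philem} together with the ghost condition. It suffices to treat $T^G\in\C[X]$, since $\phi$ is a $*$-homomorphism, the ghost operators form a closed ideal, and $\C[X]$ is dense in $C^*(X)$ — so a density argument extends the result from finite-propagation ghosts to all of $C^*(X)$; more precisely, given a general ghost $T^G\in C^*(X)$ and $\epsilon>0$, one approximates $T^G$ by some $S\in\C[X]$ with $\|T^G-S\|<\epsilon$, notes that then $S$ is ``almost a ghost'' (its far-off matrix coefficients are small in norm up to $\epsilon$), and deduces $\|\phi(T^G)\|\leq\|\phi(S)\|+\epsilon$, so it is enough to bound $\|\phi(S)\|$ by a quantity controlled by $\epsilon$. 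Actually it is cleanest to argue: the set of $T$ for which $\phi(T)=0$ is closed, and contains all finite-propagation ghosts; and every ghost in $C^*(X)$ is a norm-limit of finite-propagation ghosts (truncate in propagation — truncation preserves the ghost property since it only kills matrix entries). So I focus on $T^G=(T_{x,y})\in\C[X]$ a ghost of propagation at most $S$.

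Next I would unwind $\phi$ on such a $T^G$. By the construction in Lemma~\ref{philem}, for $n$ large (so that $\pi_n$ is a $2S$-metric cover) the operator $\widetilde{T^{(n)}}\in\C[\wgn]^{\Gamma_n}$ has matrix coefficients $\widetilde{T^{(n)}}_{x,y}=T^{(n)}_{\pi_n(x),\pi_n(y)}$ when $d(x,y)\leq S$ and $0$ otherwise, where $T^{(n)}$ is the block of $T^G$ on $G_n$. The key point is to estimate $\|\widetilde{T^{(n)}}\|$ and show it tends to $0$ as $n\to\infty$; that is exactly the statement $\phi(T^G)=0$. Here the uniform operator norm localization property enters: there is a constant $c>0$ and, for the fixed propagation $S$, a radius $S'$ (uniform in $n$) such that we can find a unit vector $\xi\in l^2(\wgn,\Hi)$ with $\operatorname{diam}(\operatorname{supp}\xi)\leq S'$ and $\|\widetilde{T^{(n)}}\xi\|\geq c\|\widetilde{T^{(n)}}\|$. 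Since $\xi$ is supported in a ball $B(\widetilde{x},S')$ in $\wgn$, and $\pi_n$ is eventually a $(2S+2S')$-metric cover, $\pi_n$ restricts to an isometry on $B(\widetilde{x},S+S')$; because $\widetilde{T^{(n)}}$ has propagation at most $S$, the vector $\widetilde{T^{(n)}}\xi$ is supported in $B(\widetilde{x},S+S')$, and the action of $\widetilde{T^{(n)}}$ on such vectors is carried isometrically by $\pi_n$ to the action of $T^{(n)}$ on the corresponding ball $B(\pi_n(\widetilde x),S+S')$ in $G_n$. Hence $\|\widetilde{T^{(n)}}\xi\| = \|T^{(n)}\eta\|$ for the pushed-forward unit vector $\eta=(\pi_n)_*\xi$ on $G_n$, which is supported in a ball of radius $S'$ about $\pi_n(\widetilde x)\in G_n$.

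Finally I invoke the ghost condition on $T^G$: given $\epsilon>0$, there is a bounded $B\subseteq X$ such that $\|T^G\zeta\|<\epsilon$ for any unit $\zeta$ supported in an $S'$-ball about a point outside $B$. Since $|G_n|\to\infty$ and the distances $d(G_n,G_m)$ force $G_n$ to ``escape to infinity,'' for $n$ large enough $G_n\cap B=\emptyset$, so the vector $\eta$ above is supported in an $S'$-ball about a point of $X\setminus B$, giving $\|T^{(n)}\eta\|<\epsilon$. Combining, $c\|\widetilde{T^{(n)}}\|\leq\|\widetilde{T^{(n)}}\xi\|=\|T^{(n)}\eta\|<\epsilon$ for all large $n$, so $\|\widetilde{T^{(n)}}\|\to 0$, i.e. $\prod_n\widetilde{T^{(n)}}\in\oplus_n C^*(\wgn)^{\Gamma_n}$, which is precisely $\phi(T^G)=0$. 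The main obstacle — the one genuinely using the hypotheses rather than bookkeeping — is the isometric transfer in the previous paragraph: making sure the localized vector witnessing the norm of $\widetilde{T^{(n)}}$ can be pushed down to $G_n$ through $\pi_n$ without norm loss, which requires choosing the metric-cover radius ($2S+2S'$, say) large compared to both the propagation $S$ and the localization radius $S'$, and noting that $S'$ depends only on $S$ (uniformly in $n$) precisely because we assumed the \emph{uniform} operator norm localization property.
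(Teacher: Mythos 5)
Your core argument --- use the uniform operator norm localization property to find a unit vector $\widetilde{\xi_n}$ of uniformly bounded support diameter witnessing $\|\widetilde{T^{(n)}}\|$ up to the constant $c$, push it down through $\pi_n$ isometrically using asymptotic faithfulness (with the cover radius chosen large relative to both the propagation and the localization radius), and then kill the resulting localized vector with the ghost condition --- is exactly the paper's proof. The problem is your reduction to finite propagation. You first sketch the correct reduction (approximate $T^G$ by $S\in\C[X]$ with $\|T^G-S\|<\epsilon$ and accept that $S$ is only ``almost a ghost''), but then discard it in favour of the claim that every ghost is a norm limit of finite-propagation ghosts, obtained by truncating in propagation. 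That claim is false, and fatally so for the spaces this lemma is aimed at. First, truncation does not converge in norm: $\C[X]$ is dense in $C^*(X)$ by definition, but the finite-propagation approximants of a given $T$ are not in general its truncations. Second, and more decisively: on a bounded geometry space every finite-propagation ghost is \emph{compact} (its matrix entries vanish at infinity by Remark \ref{gbgrem}, so outside a bounded set a Schur estimate using propagation $\leq R$ and the bound $|B(x,R)|\leq N_R$ shows the tail has small norm, exhibiting the operator as a norm limit of compacts). Hence the closure of the finite-propagation ghosts lies inside the compacts, and your reduction would only prove the lemma for compact $T^G$ --- where it is trivial --- while missing precisely the non-compact ghosts (e.g.\ the Kazhdan projections) that the lemma exists to handle.

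The fix is to return to your first reduction, which is what the paper does: fix $\epsilon>0$, choose $T\in\C[X]$ with $\|T^G-T\|<\epsilon$ and propagation $R$, run your localization-and-transfer argument on $\widetilde{T^{(n)}}$ to produce a unit vector $\xi_n\in l^2(G_n)$ of support diameter at most $S_R$ with $c\|\widetilde{T^{(n)}}\|\leq\|T^{(n)}\xi_n\|$, and then apply the ghost condition to $T^G$ (not to $T$) via the triangle inequality $\|T^{(n)}\xi_n\|\leq\|T^G-T\|+\|T^G\xi_n\|<2\epsilon$ for $n$ large. This gives $\|\phi(T^G)\|\leq\epsilon+\limsup_n\|\widetilde{T^{(n)}}\|\leq\epsilon+2\epsilon/c$, and since $c$ is independent of $\epsilon$ you conclude. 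Everything else in your write-up (the isometric transfer, the observation that a bounded set eventually misses $G_n$) is correct and matches the paper.
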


\begin{proof}
Fix $\epsilon>0$.  Let $T^G$ be a ghost operator, and choose $T$ of finite propagation such that $\|T^G-T\|<\epsilon$; say the propagation of $T$ is $R$.  Let $\widetilde{T^{(n)}}$ be as in the construction of $\phi(T)$ in Lemma \ref{philem} (so $\widetilde{T^{(n)}}$ exists for all $n$ suitably large), and note that each $\widetilde{T^{(n)}}$ has propagation most $R$.  Hence, using the uniform operator norm localization property, there exist $S_R>0, c>0$ and for each $n$ some $\widetilde{\xi_n}\in l^2(\wgn)$ of norm one and support diameter at most $S_R$ such that 
$$
\|\widetilde{T^{(n)}}\widetilde{\xi_n}\|\geq c\|\widetilde{T^{(n)}}\|. 
$$
On the other hand, using the covering faithfulness property (with respect to the paramter $S_R$), for all $n$ suitably large there exists norm one $\xi_n\in l^2(G_n)$ such that $\|\widetilde{T^{(n)}}\widetilde{\xi_n}\|=\|T\xi_n\|$.  Using what we have so far and the ghost property of $T^G$, for all $n$ suitably large
$$
c\|\widetilde{T^{(n)}}\|\leq \|T^{(n)}\xi_n\|\leq \|T^G-T\|+\|T^G\xi_n\|<2\epsilon.
$$
Hence 
$$
\|\phi(T^G)\|\leq \epsilon+\|\phi(T)\|\leq\epsilon+\limsup_n\|\widetilde{T^{(n)}}\|<\epsilon+\frac{2\epsilon}{c};
$$
as $\epsilon$ was arbitrary, and $c$ independent of $\epsilon$, this completes the proof.
\end{proof}

The following lemma shows that the basic Kazhdan projection as in Example \ref{ghostex} (i) always maps to $0$ under $\phi$, even in the non-bounded geometry case when it need not be a ghost.  We will also need this when we study $C^*_{max}(X)$ in Section 7 in the second part of this series \cite{Willett:2010zh}, as the definition of ghost operators does not make sense in $C^*_{max}(X)$.

\begin{kazdies}\label{kazdies}
Assume that $X=\sqcup G_n$ is a weak expander and $\widetilde{X}=(\wgn)_{n\in\N}$ an asymptotically faithful covering sequence with the uniform operator norm localization property.  Assume moreover that all but finitely many of the graphs $\wgn$ are infinite.
Let $p\in C^*(X)$ be the basic Kazhdan projection as in Examples \ref{ghostex} (i). Then the image of $p$ under $\phi$ is zero.
\end{kazdies}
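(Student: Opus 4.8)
The key point is that the basic Kazhdan projection $p=\prod_n(p^{(n)})$ lifts, under the construction of $\phi$, to a sequence of operators $\widetilde{p^{(n)}}$ on $l^2(\wgn)$ whose norms tend to zero. Since $\phi$ factors through the quotient $\prod_n C^*(\wgn)^{\Gamma_n}/\oplus_n C^*(\wgn)^{\Gamma_n}$, showing $\|\widetilde{p^{(n)}}\|\to 0$ immediately gives $\phi(p)=0$. Note that the argument of Lemma \ref{ghostlem} does not directly apply, since in the non-bounded-geometry case $p$ need not be a ghost; instead I want to compute the lifted operators directly.

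The first step is to recall that $p^{(n)}$ has all matrix entries equal to $1/|G_n|$ and propagation equal to $\mathrm{diam}(G_n)$, so $p$ does \emph{not} have finite propagation and we cannot feed it directly into Lemma \ref{philem}. So instead I would approximate: fix $\epsilon>0$ and choose a finite-propagation $T=\prod_n T^{(n)}$ (say $T=e^{-t\Delta}$ truncated, or better, $T$ a polynomial in $\Delta$ obtained from the spectral-gap functional calculus) with $\|p-T\|<\epsilon$; then $\|p-T\|<\epsilon$ forces $\|p^{(n)}-T^{(n)}\|<\epsilon$ for all $n$. Now form the lifts $\widetilde{T^{(n)}}\in\C[\wgn]^{\Gamma_n}$ as in Lemma \ref{philem}. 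The uniform operator norm localization property for $\widetilde{X}$, together with asymptotic faithfulness, gives (exactly as in the proof of Lemma \ref{ghostlem}) for all large $n$ a unit vector $\xi_n\in l^2(G_n)$ supported in a ball of radius $S_R$ with $\|\widetilde{T^{(n)}}\widetilde{\xi_n}\| = \|T^{(n)}\xi_n\|\geq c\|\widetilde{T^{(n)}}\|$.

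The crucial new input is that $\|T^{(n)}\xi_n\|$ is small for large $n$. Here is where I use that $\wgn$ is infinite. Since $\xi_n$ is supported in a ball of radius $S_R$ and $T^{(n)}\approx p^{(n)}$ within $\epsilon$, I estimate $\|T^{(n)}\xi_n\|\leq \epsilon + \|p^{(n)}\xi_n\|$. Now $\|p^{(n)}\xi_n\|^2 = \langle p^{(n)}\xi_n,\xi_n\rangle = |\langle \xi_n, \mathbf{1}_{G_n}/|G_n|^{1/2}\rangle|^2/1$... more precisely $p^{(n)}\xi_n = \big(\tfrac{1}{|G_n|}\sum_{x}\xi_n(x)\big)\mathbf{1}_{G_n}$, so $\|p^{(n)}\xi_n\|^2 = \tfrac{1}{|G_n|}\big|\sum_x\xi_n(x)\big|^2 \leq \tfrac{1}{|G_n|}\cdot|\mathrm{supp}(\xi_n)|\cdot\|\xi_n\|^2 \leq \tfrac{N_{S_R}}{|G_n|}$ where $N_{S_R}$ bounds the number of points in an $S_R$-ball in $\wgn$ (finite, since $\wgn$ has vertices of degree at most that of $G_n$, but actually I should argue via the cover: the ball maps isometrically down, but we don't have bounded geometry — so instead I'd bound $|\mathrm{supp}(\xi_n)|$ using that the $S_R$-ball in $\wgn$ has the same cardinality as an $S_R$-ball in $G_n$, which need not be uniformly bounded). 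This is the main obstacle: controlling $|\mathrm{supp}(\xi_n)|/|G_n|$ in the absence of bounded geometry. The fix is that for the norm localization vector we may take $\xi_n$ to be a \emph{single} unit basis vector $\delta_{z_n}$ (trees have asymptotic dimension one and the witnessing vector in that argument can be taken a point mass, or at worst $|\mathrm{supp}|\leq 1$ after composing with the structure of the proof of Lemma \ref{treelem}); more robustly, since $\wgn$ is infinite and $p^{(n)}$ lifts to the \emph{constant} function on $\wgn$ which is not in $l^2(\wgn)$, the projection $\widetilde{p^{(n)}}$ onto constants is literally zero, so one can run the argument with $p$ itself rather than an approximant, observing that the lift of $p^{(n)}$ (matrix entries $1/|G_n|$ truncated to propagation $S$, then $S\to\infty$) converges to the genuine projection onto $l^2$-constants on the infinite graph $\wgn$, which vanishes. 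Concluding: $c\|\widetilde{T^{(n)}}\| \leq \|T^{(n)}\xi_n\| \leq \epsilon + (N_{S_R}/|G_n|)^{1/2} \to \epsilon$, hence $\limsup_n\|\widetilde{T^{(n)}}\|\leq \epsilon/c$, so $\|\phi(p)\|\leq \|\phi(T)\| + \epsilon \leq \epsilon/c + \epsilon$; letting $\epsilon\to 0$ gives $\phi(p)=0$. I would present the clean version using the single-point-mass localization vector for trees to sidestep the bounded-geometry issue entirely.
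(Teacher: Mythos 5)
Your overall strategy---approximate $p$ by a finite-propagation operator, lift, and use operator norm localization to bound the norms of the lifts---is an adaptation of the proof of Lemma \ref{ghostlem}, and you correctly identify where it breaks: without bounded geometry the estimate $\|p^{(n)}\xi_n\|^2\leq |\mathrm{supp}(\xi_n)|/|G_n|$ is not useful, because the cardinality of a ball of radius $S_R$ need not be small compared to $|G_n|$. Unfortunately, neither of your proposed repairs works. For the first: the operator norm localization property genuinely only provides a witnessing vector of bounded support \emph{diameter}, not bounded support cardinality. If $T$ is the rank one projection onto the normalized indicator function of a ball $B$ of radius $S_R$ containing $N$ points (tensored with a rank one projection in $\mathcal{K}(\Hi)$), then $T$ has propagation at most $2S_R$ and norm $1$, while $\|T(\delta_z\otimes v)\|\leq N^{-1/2}$ for every point mass; a star graph already realizes this inside a tree, so no uniform constant $c$ exists for point masses. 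For the second: the operator on $\wgn$ with matrix entries $1/|G_n|$ truncated to propagation $S$ does \emph{not} converge to $0$ as $S\to\infty$ for fixed $n$ --- testing against the indicator of a ball of radius $S/2$ shows its norm is at least $|B(\tilde x,S/2)|/|G_n|$, which blows up as $S\to\infty$ precisely because $\wgn$ is infinite. So the assertion that ``the lift of $p^{(n)}$ converges to the genuine projection onto $l^2$-constants'' cannot be run for individual $n$, and making it uniform in $n$ and $S$ is exactly the difficulty you are trying to sidestep.

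The paper's proof takes a different and cleaner route: it never lifts $p$ or any approximant of $p$. Instead it applies $\phi$ to the Laplacian $\Delta$, which has propagation one, so $\phi(\Delta)$ is given directly by Lemma \ref{philem} with no approximation. Since $\phi$ is a $*$-homomorphism, $\mathrm{spectrum}(\phi(\Delta))\subseteq\{0\}\cup[c,2]$, and
$$
\phi(p)=\phi\bigl(\lim_{t\to\infty}e^{-t\Delta}\bigr)=\lim_{t\to\infty}e^{-t\phi(\Delta)}
$$
is the spectral projection of $\phi(\Delta)$ at $0$. That spectral projection is represented by $\prod_n p_n$ with $p_n$ the projection onto the constant functions in $l^2(\wgn)$, and these vanish for all but finitely many $n$ because the $\wgn$ are infinite. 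Your intuition that the projection onto constants on an infinite graph is zero is exactly the right one; the point is to reach it through the functional calculus applied to the finite-propagation operator $\Delta$, rather than through norm estimates on lifts of approximants to $p$.
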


\begin{proof}
Consider $\Delta\in C^*(X)$, and its image 
$$
\phi(\Delta)\in \frac{\prod C^*(\wgn)^{\Gamma_n}}{\oplus C^*(\wgn)^{\Gamma_n}}.
$$
Note that $\text{spectrum}(\phi(\Delta))\subseteq \{0\}\cup[c,2]$ for some $c>0$, as this is true for $\Delta$ itself.  Note moreover, however, that if $0$ were in $\text{spectrum}(\Delta)$, then the associated spectral projection would necessarily be of the form
$$\tilde{p}=[\prod p_n]\in \frac{\prod C^*(\wgn)^{\Gamma_n}}{\oplus C^*(\wgn)^{\Gamma_n}},$$
where each $p_n\in C^*(\wgn)^\Ga_n\subseteq \mathcal{B}(l^2(\wgn))$ is the projection onto the constant functions in $l^2(\wgn)$.  $p_n$ is thus zero for all but finitely many $n$, as all but finitely many of the $\wgn$ are infinite, and thus $\tilde{p}$ itself is zero.  Finally, then, we have that
$$\phi(p)=\phi(\lim_{t\to\infty}e^{-t\Delta})=\lim_{t\to\infty}e^{-t\phi(\Delta)}=\tilde{p}=0$$
as claimed.
\end{proof}

\section{Surjectivity of the coarse assembly map}\label{surjsec}

In this section, we use a version of the Atiyah-$\Ga$-index theorem \cite{Atiyah:1976th} to show that the coarse Baum-Connes assembly map fails to be surjective for certain classes of expanders.  The idea is due to Higson \cite{Higson:1999km}. We outline the basic argument below.

Let $X=\sqcup G_n$ be a space of graphs.  Then $C^*(X)$ is equipped with a $*$-homomorphism
$$
d:C^*(X)\to \frac{\prod \mathcal{K}(l^2(G_n,\Hi))}{\oplus\mathcal{K}(l^2(G_n,\Hi))},
$$
inducing a homomorphism
\begin{equation}\label{detect}
d_*:K_0(C^*(X))\to K_0\Big(\frac{\prod \mathcal{K}(l^2(G_n,\Hi))}{\oplus\mathcal{K}(l^2(G_n,\Hi))}\Big)\cong\frac{\prod\Z}{\oplus\Z}
\end{equation}
that we will use to detect non-zero $K$-theory classes.  If $[p]$ is a class in $K_0(C^*(X))$ such that $p$ decomposes as $p=\oplus p_n$, $p_n\in  \mathcal{K}(l^2(G_n,\Hi))$ a projection, then $d_*[p]$ is of course very concrete: it is just
$$
[\text{dim}(p_0),\text{dim}(p_1),\text{dim}(p_2),...]\in \frac{\prod\Z}{\oplus\Z}.
$$
For example, if $p$ is the basic Kazhdan projection from Example \ref{ghostex} (i), then $d_*[p]=[1,1,1,...]$, while if $p_\sigma$ is one of the Kazhdan projections from Example \ref{ghostex} (ii), then 
$$d_*[p_\sigma]=[0,0,...,0,\text{dim}(\sigma)^2,\text{dim}(\sigma)^2,\text{dim}(\sigma)^2,...],$$
where the zeros persist until the first quotient $\Ga/\Ga_n$ through which $\sigma$ factors, and after that the sequence is constant.
In particular, $d$ can be used to detect the non-triviality of the $K_0$-classes defined by these projections.

The underlying idea of this section is to use a version of Atiyah's $\Gamma$-index theorem \cite{Atiyah:1976th} to compare the map $d_*$ with the composition of 
$$
\phi_*:K_0(C^*(X))\to K_0\Big(\frac{\prod_nC^*(\wgn)^{\Gamma_n}}{\oplus_n C^*(\wgn)^{\Gamma_n}}\Big)
$$ 
from Lemma \ref{onllem} and a certain trace-like map
$$
T:K_0\Big(\frac{\prod_nC^*(\wgn)^{\Gamma_n}}{\oplus_n C^*(\wgn)^{\Gamma_n}}\Big)\to\frac{\prod\R}{\oplus\R}
$$ 
to show that such projections cannot be in the image of the coarse assembly map (this approach is originally due to Higson \cite{Higson:1999km}).  Indeed, we will show that for elements $[x]\in K_*(P_R(X))$, there is an identity
$$
d_*(\mu[x])=T(\phi_*(\mu[x]))
$$
(this is an abstract version of Atiyah's $\Ga$-index theorem \cite{Atiyah:1976th}), where we consider 
$$
d_*(\mu(x))\in\frac{\prod\Z}{\oplus\Z}\subseteq\frac{\prod\R}{\oplus\R}
$$
to make sense of this.  On the other hand, for the basic Kazhdan projection $p$ discussed above $d_*[p]\neq 0$, while Lemma \ref{ghostlem} implies that $T(\phi_*[p])=0$; this immediately implies that the element $[p]\in K_0(C^*(X))$ cannot be in the image of $\mu$, and the same argument shows that this is also true for the elements $[p_\sigma]\in K_0(C^*(X))$ defined by the Kazhdan projections $p_\sigma$.  In general, the following theorem is the main result of this section.  

\begin{negthe}\label{negthe}
Say that $X=\sqcup G_n$ is a space of graphs.  Say that there exists an asymptotically faithful covering sequence $\widetilde{X}=(\wgn)_{n\in\N}$ with the uniform operator norm localization property.

Then if $[p]\in K_0(C^*(X))$ is the class of a non-compact ghost projection $p\in C^*(X)$, $[p]$ is not in the image of the coarse assembly map.
\end{negthe}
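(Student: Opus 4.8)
The plan is to argue by contradiction: suppose $[p]=\mu[x]$ for some $[x]$ in $\lim_{R\to\infty}K_0(P_R(X))$ — the degree is necessarily even, since $[p]\in K_0$ — and derive incompatible conclusions about the invariant $d_*[p]$ of \eqref{detect}. The first, elementary, ingredient is a \emph{detection lemma}: for every non-compact projection $q\in C^*(X)$ one has $d_*[q]\neq0$ in $\prod\Z/\oplus\Z$. To see this I would identify $\ker d$ with $\mathcal K(l^2(X,\Hi))$. The map $T\mapsto T-\bigoplus_n\chi_{G_n}T\chi_{G_n}$ is norm-bounded, and on the dense subalgebra $\C[X]$ it lands in $\mathcal K$: a finite-propagation operator is block-diagonal away from a finite union of the $G_n$ (because the gaps $d(G_n,G_m)$ grow), and the finitely many remaining off-diagonal corners have finitely many compact entries. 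Hence every $T\in C^*(X)$ equals $\bigoplus_n T_n$ plus a compact operator, where $T_n:=\chi_{G_n}T\chi_{G_n}\in\mathcal K(l^2(G_n,\Hi))$; the map $d$ is $T\mapsto[(T_n)_n]$, and $d(T)=0$ iff $\|T_n\|\to0$ iff $T\in\mathcal K$. Thus $d(p)$ is a non-zero projection in $\prod\mathcal K(l^2(G_n,\Hi))/\oplus\mathcal K(l^2(G_n,\Hi))$; lifting it to finite-rank projections $(q_n)$ with $q_n\neq0$ infinitely often, the isomorphism of \eqref{detect} carries $d_*[p]=[d(p)]$ to the class of $(\mathrm{rank}\,q_n)_n$, which is non-zero modulo $\oplus\Z$.

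The main ingredient is an abstract form of Atiyah's $\Gamma$-index theorem: for every $[y]\in\lim_{R\to\infty}K_0(P_R(X))$ one has $d_*(\mu[y])=T(\phi_*(\mu[y]))$ in $\prod\R/\oplus\R$, where $T$ is the map induced by the componentwise von Neumann traces $\mathrm{Tr}_{\Gamma_n}$ on $\prod_nC^*(\wgn)^{\Gamma_n}/\oplus_nC^*(\wgn)^{\Gamma_n}$. To prove this, fix $R$; for $n$ large enough that $\pi_n$ is an $R$-metric cover, $P_R(\wgn)\to P_R(G_n)$ is a Galois cover with deck group $\Gamma_n$ over the compact space $P_R(G_n)$, and $P_R(X)$ is, off finitely many indices, the disjoint union $\bigsqcup_n P_R(G_n)$, so $K_0(P_R(X))\cong\prod_nK_0(P_R(G_n))$; write $[y]\leftrightarrow(y_n)_n$. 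Using the explicit idempotent $I(F)$ of Definition \ref{k0a}, the lifting formula \eqref{lift} of Lemma \ref{indlem}, and the fact that $\phi$ from Lemma \ref{philem} is assembled from exactly the same operation of restricting a finite-propagation operator to each cover, one verifies that $\phi_*\bigl(i_R(\mu_R[y])\bigr)$ is the class whose $n$-th coordinate is the image of $\mu_{\Gamma_n}(i^{\Gamma_n}[y_n])$ under the coarse-pushforward $C^*(P_R(\wgn))^{\Gamma_n}\to C^*(\wgn)^{\Gamma_n}$. Applying $T$ coordinatewise and invoking Atiyah's $L^2$-index theorem \cite{Atiyah:1976th} for the covers $P_R(\wgn)\to P_R(G_n)$, the $n$-th coordinate becomes $\mathrm{Tr}_{\Gamma_n}\mu_{\Gamma_n}(i^{\Gamma_n}[y_n])=\mu[y_n]\in\Z$, the ordinary index of $y_n$ in $K_0(C^*(P_R(G_n)))=\Z$; but that integer is precisely the $n$-th coordinate of $d_*(\mu[y])$, since the pushforward $K_0(C^*(P_R(G_n)))\to K_0(C^*(G_n))$ is the canonical identification $\Z=\Z$. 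As all the maps involved commute with the structure maps $i_R$, the identity passes to the limit over $R$.

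Granting these two ingredients, the theorem follows immediately. Because $\widetilde X$ is asymptotically faithful with the uniform operator norm localization property and $p$ is a ghost, Lemma \ref{ghostlem} gives $\phi(p)=0$, hence $\phi_*[p]=0$ and $T(\phi_*[p])=0$. If $[p]=\mu[x]$ then the index formula forces $d_*[p]=T(\phi_*[p])=0$, contradicting the detection lemma; so $[p]$ is not in the image of the coarse assembly map. I expect the main obstacle to be the index-theoretic step — not the appeal to Atiyah's theorem itself, which is classical, but the bookkeeping needed to check that first assembling a $K$-homology class on $P_R(X)$ and then applying $\phi$ agrees, cover by cover, with equivariantly assembling the lift provided by Lemma \ref{indlem}, and that this agreement is natural in $R$.
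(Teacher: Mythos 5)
Your proposal is correct and follows essentially the same route as the paper: the detection fact that $d_*$ is non-zero on the class of any non-compact projection, the vanishing $\phi(p)=0$ supplied by Lemma \ref{ghostlem}, and the identity $d_*(\mu[y])=T(\phi_*(\mu[y]))$ proved cover-by-cover via the induction/assembly commutative diagram are exactly Lemma \ref{atlem} and the concluding argument of Section \ref{surjsec}. The one point you under-weight is that the index theorem you invoke must hold for \emph{arbitrary} $K$-homology classes on the finite complexes $P_R(G_n)$, not just Dirac classes on manifolds; this extension is not merely "classical bookkeeping" but is the content of Theorem \ref{atiyahthe}, which the paper proves by passing through the Baum--Douglas geometric model (or by a direct lifting argument) before the diagram chase you describe can be closed.
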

 
In particular, of course, if there are any non-compact ghost projections in $C^*(X)$, where $X$ satisfies the assumptions in the theorem, then the coarse assembly map is not surjective.  For the sake of concreteness, note the following two corollaries.

\begin{negcor}\label{negcor}
\begin{enumerate}[(i)]
\item Say $X=\sqcup G_n$ is an expander with large girth.  Then the class $[p]\in K_0(C^*(X))$ of the basic Kazhdan projection is not in the image of the coarse assembly map.
\item Say $X=\sqcup\Gamma/\Gamma_n$, where $\Gamma$ is a finitely generated discrete group with the operator norm localization property, and $(\Gamma_n)_{n\in\N}$ is a nested sequence of finite index normal subgroups such that $\cap \Ga_n=\{e\}$ and so that $\Ga$ has property $(\tau)$ with respect to this sequence.  Then none of the classes $[p_\sigma]\in K_0(C^*(X))$ as in Example \ref{ghostex} are in the image of the coarse assembly map.
\end{enumerate}
\end{negcor}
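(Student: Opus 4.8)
\textbf{Proof proposal for Corollary \ref{negcor}.}

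The plan is to deduce both statements directly from Theorem \ref{negthe}: in each case it suffices to exhibit an asymptotically faithful covering sequence with the uniform operator norm localization property, and to observe that the projections in question are non-compact ghost projections in $C^*(X)$. No new ideas are required; everything needed has been assembled in Sections \ref{cgsec}--\ref{ghostsec}.

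For part (i), since $X=\sqcup G_n$ is an expander it has a uniform bound on vertex degrees (condition (i) of Definition \ref{exdef}), hence is a bounded geometry space of graphs, and $|G_n|\to\infty$. Because $\gi(G_n)\to\infty$, Example \ref{cfex}(i) shows that the sequence $(\wgn)_{n\in\N}$ of universal covers is an asymptotically faithful covering sequence for $X$; each $\wgn$ is a tree, so by Lemma \ref{treelem} this family has the uniform operator norm localization property, and the hypotheses of Theorem \ref{negthe} are satisfied. By Example \ref{ghostex}(i), applied in the bounded geometry case, the basic Kazhdan projection $p\in C^*(X)$ has matrix entries $p^{(n)}_{x,y}=1/|G_n|\to 0$, so it is a ghost, and it is an infinite rank projection, hence non-compact. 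Theorem \ref{negthe} then gives that $[p]\in K_0(C^*(X))$ is not in the image of the coarse assembly map.

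For part (ii), the graphs $G_n=\Ga/\Ga_n$ carry the Cayley graph structure of the image of a fixed finite generating set, so $X$ again has bounded geometry, and $|\Ga/\Ga_n|\to\infty$ (a nested sequence of finite index subgroups with trivial intersection cannot stabilize unless $\Ga$ is finite). Since the $\Ga_n$ are nested with $\bigcap_n\Ga_n=\{e\}$ and each ball $B(e,R)$ in $\Ga$ is finite, for every $R>0$ there is $N_R$ with $\Ga_n\cap B(e,R)=\{e\}$ for $n\geq N_R$; hence by Example \ref{cfex}(ii) the constant sequence $\widetilde{X}=(\Ga)_{n\in\N}$ is an asymptotically faithful covering sequence for $X$. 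As $\Ga$ has the operator norm localization property by hypothesis, the constant family $(\Ga)_{n\in\N}$ has the uniform operator norm localization property, so the hypotheses of Theorem \ref{negthe} hold. Property $(\tau)$ with respect to $(\Ga_n)$ guarantees, as in Example \ref{ghostex}(ii), that for each finite dimensional irreducible $\sigma$ factoring through some $\Ga/\Ga_N$ there is a Kazhdan projection $p_\sigma\in C^*(X)$, with matrix entries $\frac{\dim(\sigma)}{|\Ga/\Ga_n|}\chi_\sigma(g^{-1}h)$ for $n\geq N$ (and $0$ otherwise), which tend to $0$; thus $p_\sigma$ is a ghost, and it is an infinite rank projection, hence non-compact. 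Theorem \ref{negthe} then gives that no $[p_\sigma]$ lies in the image of the coarse assembly map.

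There is no substantial obstacle here; the corollary is a packaging of Theorem \ref{negthe} with the examples of Sections \ref{cgsec} and \ref{ghostsec}. The only points needing a little attention are the appeal to Lemma \ref{treelem} for the uniform operator norm localization property of the family of universal covers in part (i), and, in part (ii), the elementary verification that nested finite index subgroups with trivial intersection eventually avoid any fixed finite ball, which is exactly what makes the constant covering sequence asymptotically faithful.
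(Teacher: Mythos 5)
Your proposal is correct and follows exactly the paper's own argument: both parts are deduced from Theorem \ref{negthe} by citing Example \ref{cfex} and Lemma \ref{treelem} for the asymptotically faithful covering sequence with the uniform operator norm localization property, and Example \ref{ghostex} for the projections being non-compact ghosts. The extra details you supply (the nested subgroups eventually avoiding finite balls, the passage from the operator norm localization property of $\Gamma$ to the uniform property of the constant family) are exactly the verifications the paper leaves implicit.
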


\begin{proof}
The first claim follows from Example \ref{cfex} (i), Lemma \ref{treelem} and Example \ref{ghostex} (i).  The second follows from Example \ref{cfex} (ii) and Example \ref{ghostex} (ii).
\end{proof}

This argument leaves open the possibility that for $\Ga$ a property $(\tau)$ group, the classes of the various $[p_\sigma]$ are all the same in $K_0(C^*(X))$, up to integer multiples.  Note that they \emph{are} genuinely different in $K_0(C^*_X(\Ga))$, generating an infinite rank subgroup.  We leave this as a problem.

\begin{taupprob}\label{taupprob}
Are the various classes $[p_\sigma]$ in $K_0(C^*(X))$ `genuinely different'?
\end{taupprob}

The next result is an abstract version of Atiyah's $\Gamma$-index theorem \cite{Atiyah:1976th}.  Before we state it, note that for any compact metric space $Y$, the Roe algebra $C^*(Y)$ is isomorphic to an abstract copy $\mathcal{K}$ of the compact operators.  It thus has a canonical unbounded trace $Tr:C^*(Y)\to\C\cup\{\infty\}$ which gives rise to a map on $K$-theory denoted
$$Tr_*:K_0(C^*(Y))\to\R$$
(this map does not depend on the choice of isomorphism $C^*(Y)\cong\mathcal{K}$).  Say also that $\widetilde{Y}$ is a Galois cover of $Y$ with covering group $\Ga$.  Then $C^*_r(\Ga)$ has a canonical trace defined by `taking the coefficient of the identity', i.e.\
$$
\sum_{g\in\Ga}\lambda_gu_g\mapsto \lambda_e,
$$
whence $C^*(\widetilde{Y})^\Ga$, which is isomorphic to $C_r^*(\Ga)\otimes \mathcal{K}$ by Lemma \ref{morlem}, has an unbounded trace $\tau:C^*(\widetilde{Y})^\Ga\to\C\cup\{\infty\}$ defined by taking the tensor product of the traces on $C^*_r(\Ga)$ and $\mathcal{K}$; this defines a map on $K$-theory which we denote
$$
\tau_*:K_0(C_r^*(\widetilde{Y})^\Ga)\to \R
$$
(again, the choice of isomorphism $C^*(\widetilde{Y})^\Ga\cong C^*_r(\Ga)\otimes\mathcal{K}$ does not affect this homomorphism).  Note that in the above we are tacitly using that the domain of each of these unbounded traces is a holomorphically closed subalgebra of the relevant $C^*$-algebra to get a map defined on the entire $K$-theory group.

\begin{atiyahthe}\label{atiyahthe}
Let $Y$ be a finite CW complex, and $\pi:\widetilde{Y}\to Y$ a Galois covering space, with group of deck transformations $\Ga$.  

Then for any $[x]\in K_0(Y)$
$$
Tr_*(\mu[x])=\tau_*(\mu_\Ga(i^\Ga[x]),
$$  
where $\mu:K_*(Y)\to K_*(C^*(Y))$ is the assembly map, $i^\Ga:K_*(Y)\to K^\Ga_*(\widetilde{Y})$ is the $K$-homology induction isomorphism, and $\mu_\Ga:K_*^\Ga(\widetilde{Y})\to K_*(C^*(\widetilde{Y})^\Ga)$ is the equivariant assembly map.
\end{atiyahthe}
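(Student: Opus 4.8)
The plan is to reduce everything to the classical Atiyah $L^2$-index theorem \cite{Atiyah:1976th} by unwinding the definitions of the two assembly maps and the two traces, using the $K$-homology induction isomorphism $i^\Ga$ of Lemma \ref{indlem} as the bookkeeping device that matches up a cycle $(F, l^2(Z,\Hi))$ for $K_0(Y)$ with its lift $(\widetilde F, l^2(\widetilde Z, \Hi))$ for $K_0^\Ga(\widetilde Y)$. First I would fix a cycle $F_0$ representing $[x]$, choose a finite cover $(U_i)$ of $Y$ and a subordinate partition of unity, and form the cut-down multiplier $F$ of $\C[Y]$ as in Definition \ref{k0a}; since $Y$ is compact, $C^*(Y)\cong\mathcal{K}$ and the index idempotent $I(F)$ from Definition \ref{k0a} gives a finite-rank difference, so that $Tr_*(\mu[x]) = Tr(I(F)) - 1$ is literally an integer, namely the Fredholm index of $F$ (viewed as an operator between the relevant subspaces). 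Simultaneously, using the equivariant cover $(U_{i,g})$ and the lifted partition of unity, the operator $\widetilde F$ defined by formula \eqref{lift} is exactly the $\Ga$-equivariant multiplier of $\C[\widetilde Y]^\Ga$ whose index idempotent computes $\mu_\Ga(i^\Ga[x])$; here one uses Lemma \ref{morlem} to identify $C^*(\widetilde Y)^\Ga$ with $C^*_r(\Ga)\otimes\mathcal K$ and $\tau$ with the tensor-product trace. Thus both sides of the claimed identity are expressed concretely: the left side is the integer-valued index of $F$, the right side is the von Neumann $\Ga$-dimension (i.e.\ $\tau$-trace) of the index idempotent of the $\Ga$-invariant lift $\widetilde F$.

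The second step is to observe that $F$ and $\widetilde F$ are related exactly as the elliptic operator and its lift to a Galois cover are related in Atiyah's setting: $\widetilde F$ is obtained from $F$ by "lifting each local piece $\sqrt{\phi_i}F\sqrt{\phi_i}$ to $U_i\times\Ga$", as explained in the remark following Lemma \ref{indlem}. The content of Atiyah's theorem, in the form we need, is precisely that the Fredholm index of such an $F$ equals the $\Ga$-index (the $\tau$-trace of the index idempotent) of its lift $\widetilde F$. I would either cite \cite{Atiyah:1976th} directly for this comparison, or — to keep the argument self-contained in the operator-algebraic language — prove it by the standard device: the kernel and cokernel projections of $\widetilde F$ lie in the domain of $\tau$ (they are finite-$\tau$-trace by local compactness and $\Ga$-invariance), the map $K_0(C^*(\widetilde Y)^\Ga)\to\R$ factors through $\tau_*$, and a homotopy argument (deforming $F_0$, or equivalently computing both indices via the McKean–Singer / heat-kernel supertrace formula) shows the two numbers agree. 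The key point making this work is that both the ordinary trace on $\mathcal K$ and the $\Ga$-trace on $C^*_r(\Ga)\otimes\mathcal K$ are \emph{normalized the same way on rank-one / fundamental-domain projections}, so that a finitely-supported piece of $F$ upstairs contributes the same number to $\tau$ as the corresponding piece of $F$ downstairs contributes to $Tr$.

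I expect the main obstacle to be verifying cleanly that the two index idempotents really do correspond under $i^\Ga$ and $\phi_*$-type identifications — i.e.\ chasing through Definition \ref{k0a}, Lemma \ref{indlem}, and Lemma \ref{morlem} to confirm that the cut-down-and-lift operations commute with the index construction \emph{on the nose} (or at least up to a homotopy that is trivial on $K$-theory), and that the multiplier property of $F$ over $\C[Y]$ is inherited by $\widetilde F$ over $\C[\widetilde Y]^\Ga$. The analytic subtleties — convergence of the series defining $\widetilde F$, local compactness of $\widetilde F$, membership of the spectral/index projections in the holomorphically closed domain of $\tau$ — are exactly the routine checks already flagged as "not hard to check" in the surrounding text, and the independence of all choices follows from the independence statements already built into Lemmas \ref{indlem} and \ref{morlem} together with the fact that conjugation by a unitary multiplier acts trivially on $K$-theory. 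So the real work is the index-theoretic comparison of the previous paragraph, for which Atiyah's theorem (or its heat-kernel proof) is the essential input.
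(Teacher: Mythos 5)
Your overall strategy -- unwind both assembly maps on a concrete cut-down cycle, lift it using the partition-of-unity formula, and then compare the two traces of the resulting index idempotents, with Atiyah's theorem (or a direct computation exploiting that the two traces agree on fundamental-domain pieces) as the final input -- is genuinely the same circle of ideas as the paper, which gives two proofs: one reducing to Atiyah's $\Ga$-index theorem, and one self-contained. But as written there is a gap at exactly the point you flag as the ``main obstacle,'' and in the way you propose to invoke Atiyah. Atiyah's theorem is a statement about elliptic differential operators on closed manifolds and their lifts to Galois covers; it does not directly apply to an abstract Fredholm module $(F_0,l^2(Z,\Hi))$ over a finite CW complex, and neither does a McKean--Singer/heat-kernel argument, since an abstract cycle has no heat kernel. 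The paper's first proof supplies the missing reduction via the Baum--Douglas model of $K$-homology (Baum--Higson--Schick): every class $[x]\in K_0(Y)$ is $f_*[D]$ for a Dirac-type operator $D$ on a compact spin$^{\text{c}}$ manifold mapping to $Y$, the induction $i^\Ga$ is implemented by pulling back the cover, and naturality of assembly reduces the identity to the classical $\Ga$-index theorem for $D$. Without some such geometric representative, ``cite Atiyah directly for this comparison'' is not a legitimate step.

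For the self-contained route, the paper resolves the idempotent-matching problem you raise by a specific mechanism that your sketch leaves open: after passing to a Dirac representative one may (by Roe's finite-propagation-speed/chopping-function lemma) choose $F$ with $1-F^*F$ and $1-FF^*$ trace class and with propagation strictly smaller than the injectivity radius $\epsilon$ of the cover. On operators of propagation $<\epsilon/2$ the lifting map $\phi^L$ is an honest $*$-homomorphism, so it commutes with the index construction of Definition \ref{k0a} \emph{on the nose}: $I(\phi^L(F))=\phi^L(I(F))$. The trace identity is then immediate, because $\tau$ of $\phi^L(I(F))=\sum_g u_g\odot k_g$ reads off the coefficient $k_e$, which is identified with $I(F)$ itself under the identification of the fundamental domain with $Y$ -- this is the precise form of your ``normalized the same way'' observation. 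So the two ingredients your proposal is missing are (a) the geometric (Baum--Douglas) representative, which is what makes either the citation of Atiyah or the trace-class and small-propagation refinements available, and (b) the observation that smallness of propagation relative to the cover makes the lift multiplicative, which is what turns the idempotent comparison from an obstacle into a triviality.
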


We give a short proof using the Baum--Douglas model of $K$-homology \cite{Baum:1980pt}, and for the reader's convenience a different proof that avoids reference to Atiayh's work \cite{Atiyah:1976th} in the classical case. 

\begin{proof}
Using the Baum--Douglas geometric model of $K$-homology as recently studied by Baum--Higson--Schick \cite{Baum:2007ek}, $[x]$ can be represented by a cycle $(M,E,f)$ where $M$ is a compact spin$^\text{c}$ manifold, $E$ is a complex vector bundle over $M$, and $f:M\to Y$ is a continuous map.  The image of $[x]$ under $i^\Ga:K_*(Y)\to K_*^\Ga(\widetilde{Y})$ is then represented by the cycle $(\widetilde{M},\widetilde{E},\widetilde{f})$, where $\widetilde{M}$ is the principal $\Ga$-bundle over $M$ defined as the pullback of $\widetilde{Y}$ along $\phi$, and $\widetilde{E}$, $\widetilde{f}$ are the corresponding lifts (see \cite{Baum:2009hq} for the Baum--Douglas geometric description of equivariant $K$-homology).  Let $D$, $\widetilde{D}$ be the spin$^\text{c}$ Dirac operators on $M$, $\widetilde{M}$ respectively, and $[D]\in K_*(M)$, $[\widetilde{D}]\in K_*^\Ga(\widetilde{M})$ the corresponding classes in Kasparov $K$-homology.  It follows from the definition of the map from Baum-Douglas $K$-homology to Kasparov $K$-homology and naturality of the assembly map that
$$
Tr_*(\mu[x])=Tr_*(\mu(f_*[D]))=Tr_*(\mu[D])
$$
and 
$$
\tau_*(\mu_\Ga(i^\Ga[x])=\tau_*(\mu_\Ga(f_*[\widetilde{D}]))=\tau_*(\mu_\Ga[\widetilde{D}]);
$$
thus we have reduced the original problem to showing that 
$$
Tr_*(\mu[D])=\tau_*(\mu_\Ga[\widetilde{D}]).
$$
This, however, is equivalent to the classical statement of Atiyah's $\Ga$-index theorem for the Dirac-type operator $D$ \cite{Atiyah:1976th}.
\end{proof}  

\begin{proof}[A different proof of Theorem \ref{atiyahthe}]
The heat kernel method underlies Atiyah's original proof of the $\Ga$ index theorem  \cite{Atiyah:1976th}.  For the benefit of readers unfamiliar with the heat kernel method, we sketch a proof below of Theorem \ref{atiyahthe} that avoids its use and is closer to the rest of this paper.  Note, however, that the central ideas -- localization and lifting -- of both the proof below and of the heat kernel method for proving the $\Ga$-index theorem are the same.  This is the only place in this paper where we use a notion of propagation defined for operators on a Hilbert space not of the form $l^2(Z,\Hi)$; we refer the reader to (for example) \cite[Chapter 6]{Higson:2000bs} for details.

Let $[x]$ be a class in $K_*(Y)$.  Using the aforementioned results of Baum-Higson-Schick, $[x]$ can be represented by $f_*[D]$ for some Dirac-type operator $D$ on a compact spin$^\text{c}$ manifold $M$ and continuous map $f:M\to Y$.  Concretely, $f_*[D]$ is represented on a Hilbert space $L^2(M,S)$ of $L^2$-sections of a bundle $S$ over $M$ on which $D$ acts, which we consider as a module over $C(Y)$ using $f$.  Using a finite propagation speed argument due to Roe (see for example \cite{Roe:1989sh}, particularly Lemma 7.5) we may assume that the class $f_*[D]\in K_*(Y)$ is represented by an operator $F$ on $L^2(M,S)$ of arbitrarily small propagation (for the metric on $Y$ -- this uses that the map $f:M\to Y$ is uniformly continuous) and such that $FF^*-1$ and $F^*F-1$ are in $S_1(L^2(M,S))$, the Banach-$*$ algebra of  trace class operators on $L^2(M,S)$; concretely, $F=\chi(D)$, where $\chi$ is a \emph{chopping function} with suitably good properties as in \cite[Lemma 7.5]{Roe:1989sh}.   By adding a degenerate module of the form (for example) $(1,l^2(Z,\Hi))$, $Z$ a countable dense subset of $Y$, we may assume that the module on which this cycle is defined is ample for $C(Y)$ (i.e.\ the representation of $C(Y)$ is unital and no non-zero element of $C(Y)$ acts by a compact operator -- this is useful to identify the constructions below with the usual assembly maps); by abuse of notation, denote this new cycle by $(F, \mathcal{H})$, noting that the new $F$ still has the same propagation and trace properties as the old one.

We will now define a lift $(\phi^L(F),l^2(\Ga)\otimes\mathcal{H})$ of the cycle $(F,\mathcal{H})$ to $\widetilde{Y}$.
Fix a precompact Borel fundamental domain $D\subseteq \widetilde{Y}$: precisely, we require that $\widetilde{Y}=\sqcup_{g\in\Ga}g\cdot D$, and that for each $g\in\Ga$, $\pi|_{g\cdot D}:g\cdot D\to Y$ is a Borel isomorphism.  For each $g\in\Ga$ and $f\in C_0(\widetilde{Y})$ note that $f|_{g\cdot D}\circ(\pi|_{g\cdot D})^{-1}$ is a bounded Borel function on $Y$; having extended the representation of $C(Y)$ on $\mathcal{H}$ to a representation of the bounded Borel functions on $Y$, we may define a representation of $C_0(\widetilde{Y})$ on $l^2(\Ga)\otimes\mathcal{H}$ by 
$$
f\cdot (\delta_g\otimes \xi ):=\delta_g\otimes ( f|_{g\cdot D}\circ(\pi|_{g\cdot D})^{-1})\xi
$$
for all $f\in C_0(\widetilde{Y})$, $g\in \Ga$ and $\xi\in\mathcal{H}$.  This gives the $C_0(\widetilde{Y})$-module part of our lift of $(F,\mathcal{H})$.

Now, let $\epsilon>0$ be such that $\pi:\widetilde{Y}\to Y$ is an $\epsilon$-metric cover, and assume from now on that the propagation of $F$ is less than $\epsilon/10$.   Let $(U_i)_{i=1}^N$ be a finite Borel cover of $Y$ by disjoint sets of diameter less than $\epsilon/2$, and let $\chi_i$ be the characteristic function of $U_i$.   For any bounded operator $T$, define 
$$
T_{i,j}:=\chi_iT\chi_j:\chi_j\mathcal{H}\to\chi_i\mathcal{H};
$$
in this way $T$ can be represented by an $N\times N$ matrix $(T_{i,j})_{i,j=1}^N$, which will help us lift $T$ to $\widetilde{Y}$.  Let $\widetilde{U_i}=\pi^{-1}(U_i)$, which splits as a disjoint collection $\widetilde{U_i}=\sqcup_{g\in\Ga}U_{i,g}$, where each $U_{i,g}$ is isometric to $U_i$ via $\pi$.  Let $\chi_{i,g}$ be the characteristic function of $U_{i,g}$. 
Note that any operator $T$ on $l^2(\Ga)\otimes\mathcal{H}$ can be written uniquely as a matrix $(T_{(i,g),(j,h)})_{i,j=1,g,h\in\Ga}^N$, where
$$
T_{(i,g),(j,h)}:\chi_{j,h}(l^2(\Ga)\otimes\mathcal{H})\to\chi_{i,g}(l^2(\Ga)\otimes\mathcal{H}).
$$
For each ${i,g}$ there are canonical identifications 
\begin{align*}
\chi_{i,g}(l^2(\Ga,\mathcal{H})) & =\bigoplus_{h\cdot D\cap U_{i,g}\neq\emptyset}\delta_h\otimes (\chi_{i,g}|_{h\cdot D}\circ (\pi|_{h\cdot D})^{-1})\mathcal{H} \\
& =\bigoplus_{h\cdot D\cap U_{i,g}\neq\emptyset} \chi_i|_{\pi(h\cdot D\cap U_{i,g})}\mathcal{H} \\
& =\chi_i\mathcal{H}.
\end{align*}
Using these identifications, for any operator 
$$T_{i,j}:\chi_j\mathcal{H}\to \chi_i\mathcal{H}$$
and any $g,h\in\Ga$ we may form a lift
$$
\widetilde{T_{i,j}}:\chi_{j,h}(l^2(\Ga,\mathcal{H}))\to \chi_{i,g}(l^2(\Ga,\mathcal{H})).
$$

Denote now by $\mathcal{L}_{\epsilon/2}[Y]$ the collection of (not necessarily locally compact) operators on $\mathcal{H}$ of propagation at most $\epsilon/2$, and similarly, denote by $\mathcal{L}_{\epsilon/2}[\widetilde{Y}]^\Ga$ the $\Ga$-invariant operators on $l^2(\Ga)\otimes\mathcal{H}$ of propagation at most $\epsilon/2$.  For any $T\in \mathcal{L}_{\epsilon/2}[Y]$, define a lifted operator $\phi^L(T)$ by the matrix coefficient formula
$$
\phi^L(T)_{(i,g),(j,h)}=\left\{\begin{array}{ll} \widetilde{T_{i,j}} & d(U_{i,g},U_{j,h})<\epsilon/2 \\ 0 & \text{otherwise}\end{array}\right.,
$$
and note that $\phi^L(T)$ is an element of $\mathcal{L}_{\epsilon/2}[\widetilde{Y}]^\Ga$.

This defines a `lifting map' 
$$\phi^L:\mathcal{L}_{\epsilon/2}[Y]\to \mathcal{L}_{\epsilon/2}[\widetilde{Y}]^\Ga,$$
which is in fact a $*$-homomorphism whenever multiplication makes sense in $\mathcal{L}_{\epsilon/2}[Y]$ ($\phi^L$ is a close `local' analogoue of the map $\phi$ from Lemma \ref{philem} -- `$^L$' stands for `local').  In particular, then, there exists a lift $\phi^L(F)$ of $F$ to $l^2(\Ga)\otimes\mathcal{H}$, which can be used to define the $K$-homology class $i^\Ga[x]$, i.e.\ there is an equality
$$[\phi^L(F),l^2(\Ga)\otimes\mathcal{H}]=i^\Ga[x]\in K_*^\Ga(\widetilde{Y})$$
of $K$-homology classes (cf.\ the description of $i^\Ga$ in Lemma \ref{indlem} above).  This class is the lift of $[F,\mathcal{H}]$ that we have been trying to construct.  
The reason this lift exists is that the operator $F$ representing the class $[x]\in K_*(Y)$ can be taken to be `arbitrarily local' (unlike, for example, a non-compact ghost operator).

Let now $S_1$ denote the algebra of trace class operators on $\mathcal{H}$ and $\C[\Ga]\odot S_1$ the algebraic tensor product of the group algebra of $\Ga$ and $S_1$, which is represented on $l^2(\Ga)\otimes \mathcal{H}$ in the obvious way.  Using the formulas from Definition \ref{k0a} above, we may define `index operators' $I(F)$ and $I(\phi^L(F))$, which, up to taking $2\times 2$ matrices, give operators in the unitizations of $S_1$, $\C[\Ga]\odot S_1$ respectively; this uses the assumption that $1-F^*F$ and $1-FF^*$ are trace class from the first paragraph above. 

Write $Tr:S_1\to \C$ and $\tau:\C[\Ga]\odot S_1\to\C$ for the traces on these algebras, defined precisely analogously to those used in the statement of the theorem.  It follows from the definitions of the assembly maps and the comments so far that
\begin{equation}\label{ac}
Tr_*(\mu[x])=Tr\Big(I(F)-\begin{pmatrix} 1 & 0 \\ 0 & 0 \end{pmatrix}\Big) ~~\text{ and } \tau_*(\mu_\Ga(i^\Ga[x]))=\tau\Big(I(\phi^L(F))-\begin{pmatrix} 1 & 0 \\ 0 & 0 \end{pmatrix}\Big);
\end{equation}
in both cases, the left hand side is the image of a certain class under a trace map on $K$-theory, while the right hand side is a concrete operator trace.  Now, using the fact that $\phi^L$ is a $*$-homomorphism on its domain (and remains so when extended to $2\times 2$ matrices), and that $F$ has propagation less than $\epsilon/10$, $I(F)$ has propagation less than $\epsilon/2$, so that $\phi^L(I(F))$ both makes sense and is equal to $I(\phi^L(F))$.  Looking back at line \eqref{ac} above, then, it thus suffices for the proof of the theorem to show that
$$
Tr\Big(I(F)-\begin{pmatrix} 1 & 0 \\ 0 & 0 \end{pmatrix}\Big)=\tau\Big(\phi^L(I(F))-\begin{pmatrix} 1 & 0 \\ 0 & 0 \end{pmatrix}\Big).
$$  

To complete the proof, write 
$$
\phi^L(I(F))=\sum_{g\in\Ga}u_g\odot k_g\in M_2((\C[\Ga]\odot S_1)^+)
$$
(`$^+$' denotes `unitization') where each $k_g$ is in $M_2(S_1^+)$, and note that, by definition of $\tau$, 
$$\tau\Big(\phi^L(I(F))-\begin{pmatrix} 1 & 0 \\ 0 & 0 \end{pmatrix}\Big)=Tr\Big(k_e-\begin{pmatrix} 1 & 0 \\ 0 & 0 \end{pmatrix}\Big);$$ 
thus it suffices to show that
$$
Tr\Big(I(F)-\begin{pmatrix} 1 & 0 \\ 0 & 0 \end{pmatrix}\Big)=Tr\Big(k_e-\begin{pmatrix} 1 & 0 \\ 0 & 0 \end{pmatrix}\Big).
$$
Up to identifying the fundamental domain $D$ and $Y$ itself, however, it is not hard to use the description of $\phi^L$ above to check that $I(F)=k_e$, and we are done. \end{proof}

Now, let $\widetilde{X}$ be as in the statement of Theorem \ref{negthe}.  Fix a basepoint $b_n\in \wgn$ for each $n$, and let $\Gamma_n$ be the group of deck transformations of $\wgn$ so that $\wgn/\Gamma_n=G_n$ (having chosen $b_n$, $\Gamma_n$ is of course unique).  For each $n$ consider the homomorphism 
$$\tau^{(n)}_*:K_0(C^*(\wgn)^{\Gamma_n})\to\R$$
defined using the (unbounded) trace discussed above. We may thus define a group homomorphism
$$T=\frac{\prod\tau^{(n)}_*}{\oplus\tau^{(n)}_*}:K_0\Big(\frac{\prod_{n} C^*(\wgn)^{\Gamma_n}}{\oplus_{n} C^*(\wgn)^{\Gamma_n}}\Big)\to\frac{\prod_n\R}{\oplus_n\R}.$$
Moreover, using the fact that $\widetilde{X}$ is an asymptotically faithful covering sequence for $X$ and the uniform operator norm localization property, there is a $*$-homomorphism
$$\phi:C^*(X)\to \frac{\prod_{n} C^*(\wgn)^{\Gamma_n}}{\oplus_{n} C^*(\wgn)^{\Gamma_n}}$$
as in Lemma \ref{onllem}. 

The following lemma, combined with Lemma \ref{ghostlem}, essentially completes the proof of Theorem \ref{negthe}.

\begin{atlem}\label{atlem}
If $p$ is a projection in $C^*(X)$ such that the class $[p]\in K_0(C^*(X))$ is in the image of the coarse assembly map 
$$\mu:\lim_{R\to\infty} K_0(P_R(X))\to K_0(C^*(X))$$
then 
$$T(\phi_*[p]))=d_*[p]\in\frac{\prod_n\R}{\oplus_n\R}$$
(here $\phi$ is as in Lemma \ref{philem}).
\end{atlem}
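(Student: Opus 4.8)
The strategy is to split everything up over the graphs $G_n$ and to recognise the asserted equality as a countable family of instances of the abstract Atiyah $\Gamma$-index theorem, Theorem \ref{atiyahthe}, one for each $n$; the bulk of the real content then lies in that theorem, and what remains is to check that $d_*$ and $T\circ\phi_*$ both compute these instances componentwise.

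To begin, since $[p]$ lies in the image of the coarse assembly map there are $R>0$ and a class $[x]\in K_0(P_R(X))$ with $i_R(\mu_R([x]))=[p]$. Because $d(G_n,G_m)\to\infty$, for this fixed $R$ only finitely many of the $G_n$ are within distance $R$ of another $G_m$; discarding those (harmless, as everything in sight lands in a quotient by a direct sum), $P_R(X)=\bigsqcup_n P_R(G_n)$ with each $P_R(G_n)$ a finite simplicial complex, so $K_0(P_R(X))=\prod_n K_0(P_R(G_n))$ and we may write $[x]=(x_n)_n$. Moreover, by asymptotic faithfulness there is for each $R$ an $N$ such that for $n\geq N$ the map $\pi_n\colon\wgn\to G_n$ is an $R'$-metric cover with $R'$ as large relative to $R$ as we wish; in particular the induced simplicial map $P_R(\wgn)\to P_R(G_n)$ is then a Galois covering with deck group $\Gamma_n$. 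Discarding finitely many more $n$ is again harmless, and since the target $\prod_n\R/\oplus_n\R$ is computed componentwise, it now suffices to prove, for all large $n$, the identity of $n$-th components.

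Next one identifies the two sides componentwise. For the left-hand side: using that $i_R$ is induced by the block-wise coarse equivalences $G_n\hookrightarrow P_R(G_n)$, and that a uniformly bounded locally finite cover of $P_R(X)$ restricts to such covers of the pieces $P_R(G_n)$ -- so that the cut-down operator of Definition \ref{k0a} is block diagonal -- a short diagram chase shows that the $n$-th component of $d_*[p]$ is $Tr_*(\mu(x_n))$, where $\mu\colon K_0(P_R(G_n))\to K_0(C^*(P_R(G_n)))\cong\Z$ is the assembly map of the compact complex $P_R(G_n)$ and $Tr_*$ the $K$-theory trace on $K_0(C^*(P_R(G_n)))$. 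For the right-hand side: apply the constructions of Section \ref{algsec} also to the covering sequence $(P_R(\wgn)\to P_R(G_n))_n$ of $P_R(X)$, which is asymptotically faithful and -- because each $P_R(\wgn)$ is uniformly coarsely equivalent to $\wgn$ -- also has the uniform operator norm localization property; let $\phi^{P_R}$ and $T^{P_R}$ be the resulting $*$-homomorphism and trace-like map. By naturality of the $\phi$-construction and compatibility of the traces $\tau^{(n)}$ with the $\Gamma_n$-equivariant coarse equivalences $\wgn\hookrightarrow P_R(\wgn)$ (and $X\hookrightarrow P_R(X)$), one gets $T(\phi_*[p])=T^{P_R}(\phi^{P_R}_*(\mu_R[x]))$. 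Now $\mu_R[x]$ is represented by a block-diagonal idempotent whose $n$-th block represents $\mu(x_n)$; applying $\phi^{P_R}$ block by block, and using that for $n$ large the lifting map of Lemma \ref{philem} is a $*$-homomorphism on the (finite-propagation) operators involved -- hence commutes with the index construction of Definition \ref{k0a} -- the $n$-th block of $\phi^{P_R}_*(\mu_R[x])$ represents $\mu_{\Gamma_n}(i^{\Gamma_n}(x_n))$, where $i^{\Gamma_n}\colon K_0(P_R(G_n))\to K_0^{\Gamma_n}(P_R(\wgn))$ is the $K$-homology induction isomorphism of Lemma \ref{indlem} for the cover $P_R(\wgn)\to P_R(G_n)$; here one matches the ``restrict to a block and transport matrix entries through $\pi_n$ on small balls'' recipe defining $\phi^{P_R}$ with the ``cut down by a partition of unity and lift to $U_i\times\Gamma_n$'' recipe of formula \eqref{lift}, as in the remark following Lemma \ref{indlem}. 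Applying $T^{P_R}$ then shows that the $n$-th component of $T(\phi_*[p])$ is $\tau^{(n)}_*(\mu_{\Gamma_n}(i^{\Gamma_n}(x_n)))$.

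Finally, apply Theorem \ref{atiyahthe} with $Y=P_R(G_n)$ and $\widetilde Y=P_R(\wgn)$, which gives for each large $n$
$$
Tr_*(\mu(x_n))=\tau^{(n)}_*\big(\mu_{\Gamma_n}(i^{\Gamma_n}(x_n))\big);
$$
combined with the two componentwise identifications above, $d_*[p]$ and $T(\phi_*[p])$ agree in every component of sufficiently large index, hence are equal in $\prod_n\R/\oplus_n\R$. I expect the genuine obstacle to be the right-hand identification: after the coarse-equivalence bookkeeping, one must check that the lift built into $\phi$ (or $\phi^{P_R}$) agrees, at the level of $K$-theory, with the $K$-homology induction $i^{\Gamma_n}$ followed by the equivariant assembly map, and that all lifts in play are honest $*$-homomorphisms on the operators to which they are applied (so that lifting commutes with the index construction). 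This is essentially the local-lifting computation already carried out inside the second proof of Theorem \ref{atiyahthe}, transplanted from a finite CW complex and its cover to $P_R(G_n)$ and $P_R(\wgn)$; granting it, the rest is routine naturality.
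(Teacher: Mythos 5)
Your proposal is correct and follows essentially the same route as the paper: decompose $K_0(P_R(X))$ over the components $P_R(G_n)$, identify $\phi_*\circ\mu_R$ with the componentwise composition of $K$-homology induction $i^{\Gamma_n}$ and equivariant assembly $\mu_{\Gamma_n}$ (the commuting diagram the paper asserts "follows from the definition of assembly", and which you correctly flag as the real work, handled by the local-lifting computation in the second proof of Theorem \ref{atiyahthe}), and then apply Theorem \ref{atiyahthe} with $Y=P_R(G_n)$, $\widetilde Y=P_R(\wgn)$ in each component. The only cosmetic difference is your introduction of the auxiliary maps $\phi^{P_R}$ and $T^{P_R}$ at the Rips-complex level, where the paper absorbs this bookkeeping into its single commutative diagram.
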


\begin{proof}
Fix $R>0$.  It follows from the definition of the Rips complex and the metric on $X$ that
$$P_R(X)=P_0\sqcup\bigsqcup_{n\geq N_R}P_R(G_n)$$
for some $P_0$ and $N_R\in \N$; assume moreover that $N_R$ is so large that $\Ga_n$ acts freely and properly on $P_R(\wgn)$ and
$$P_R(\wgn)/\Ga_n=P_R(G_n)$$
for all $n\geq N_R$ (this is possible by the asymptotic faithfulness property).  In particular, for all $n\geq N_R$, $K_*(P_R(X))$ admits a product decomposition 
\begin{equation}\label{rsplit}
K_*(P_R(X))=K_*(P_0)\oplus \prod_{n\geq N_R}K_*(P_R(G_n)).
\end{equation} 
and there is an induction isomorphism
$$
i^{\Ga_n}:K_*(P_R(G_n))\stackrel{\cong}{\to}K^{\Gamma_n}_*(P_R(\wgn))
$$
as in Lemma \ref{indlem}.
Define $\widetilde{\mu_R}$ to be the composition of the product of induction homomorphisms
$$
0\oplus\prod_{n\geq N_R}i^{\Ga_n}:K_*(P_R(G_n))\to \prod_{n\geq N_R} K_*^{\Gamma_n}(\wgn)
$$
(this uses line (\ref{rsplit}) above), the product of equivariant assembly maps 
$$
\prod_{n\geq N_R}\mu_{\Gamma_n}: \prod_{n\geq N_R} K_*^{\Gamma_n}(\wgn)\to \prod_{n\geq N_R}K_*(C^*(\wgn)^{\Gamma_n}),
$$
and the inclusion-and-quotient map
$$
\prod_{n\geq N_R}K_*(C^*(\wgn)^{\Gamma_n})\cong K_*\Big(\prod_{n\geq N_R}C^*(\wgn)^{\Gamma_n}\Big)\to K_*\Big(\frac{\prod_{n} C^*(\wgn)^{\Gamma_n}}{\oplus_{n} C^*(\wgn)^{\Gamma_n}}\Big)
$$
(where the first isomorphism uses stability of the algebras $C^*(\wgn)^{\Gamma_n}$).

It follows from the definition of assembly that the diagram
$$
\xymatrix{ K_*(P_R(X)) \ar@{=}[d] \ar[rr]^{\widetilde{\mu_R}} & & K_*\Big(\frac{\prod_{n} C^*(\wgn)^{\Gamma_n}}{\oplus_{n} C^*(\wgn)^{\Gamma_n}}\Big) \ar@{=}[d] \\
K_*(P_R(X)) \ar[r]^{\mu_R} & K_*(C^*(X)) \ar[r]^{\phi_*~~~} & K_*\Big(\frac{\prod_{n} C^*(\wgn)^{\Gamma_n}}{\oplus_{n} C^*(\wgn)^{\Gamma_n}}\Big) },
$$
commutes.  Now, this diagram implies that if $[p]$ were the image of some element $[x]\in K_0(P_R(X))$ then
$$T(\phi_*[p])=T(\widetilde{\mu_R}[x]);$$
by Theorem \ref{atiyahthe}, however, the right hand side
is the same as $d_*[p]$.  Taking the limit as $R$ tends to infinity yields the lemma.
\end{proof}

\begin{proof}[Proof of Theorem \ref{negthe}]
Using Lemmas \ref{ghostlem} and \ref{atlem}, it suffices to show that for a non-compact ghost projection $p$, $d_*([p])\neq 0$.  This is true for any non-compact projection $p\in C^*(X)$, however.  Indeed, if $P_n\in\mathcal{B}(l^2(X))$ is the projection onto $l^2(G_n)$, then for any $T\in C^*(X)$, $d(T)$ is given by the sequence of cutdowns
$$
[P_0TP_0,~P_1TP_1,~...]\in\frac{\prod_n\mathcal{K}(l^2(G_n,\Hi))}{\oplus_n \mathcal{K}(l^2(G_n,\Hi))}.
$$
However, any $T\in C^*(X)$ satisfies $[T,P_n]\to0$ as $n\to\infty$ (i.e.\ $T$ `asymptotically commutes' with the sequence $(P_n)$).  Hence if $p\in C^*(X)$ is a projection, then $d(p)$ is equal to an element 
$$
[p_0,p_1,...]\in \frac{\prod_n\mathcal{K}(l^2(G_n,\Hi))}{\oplus_n \mathcal{K}(l^2(G_n,\Hi))},
$$ 
where all the $p_i$s are projections: this follows as the asymptotic commutativity property implies that $P_npP_n$ gets arbitrarily close to some actual projection $p_n$ as $n\to\infty$.  Moreover, such a sequence $[p_0,p_1,...]$ will have only finitely many non-zero terms if and only if $p$ is compact.
\end{proof}

We conclude this section with an additional result: it is in some ways weaker than Theorem \ref{negthe} as it only applies to the basic Kazhdan projection, but has the advantage of applying in the non-bounded-geometry case.

\begin{nonbg}\label{nonbg}
Let $X=\sqcup G_n$ be a weak expander as in Definition \ref{exdef} with large girth.  Then the basic Kazhdan projection associated to $X$ is not in the image of the coarse assembly map.
\end{nonbg}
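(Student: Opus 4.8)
The plan is to deduce this from Lemma \ref{kazdies} and Lemma \ref{atlem}, rather than from Theorem \ref{negthe} directly. The reason Theorem \ref{negthe} does not apply is exactly the non-bounded-geometry issue: the basic Kazhdan projection $p$ need not be a ghost when the vertex degrees are unbounded. Lemma \ref{kazdies} is designed to sidestep this by computing $\phi(p)=0$ by hand (via the spectral gap of the Laplacian on the covers, using that the covers are infinite), and Lemma \ref{atlem} -- whose proof never uses bounded geometry -- then finishes the argument, since $d_*[p]\neq0$. So all that is really needed is to check the hypotheses of Lemma \ref{kazdies}.

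First I would take $\widetilde{X}=(\wgn)_{n\in\N}$ to be the sequence of universal covers of the $G_n$, with $\Gamma_n=\pi_1(G_n)$ the deck groups. Since $X$ has large girth, Example \ref{cfex}~(i) shows that $\widetilde{X}$ is an asymptotically faithful covering sequence for $X$, and since each $\wgn$ is a tree, Lemma \ref{treelem} shows that $\widetilde{X}$ has the uniform operator norm localization property. Hence Lemma \ref{onllem} produces the $*$-homomorphism $\phi\colon C^*(X)\to\frac{\prod_nC^*(\wgn)^{\Gamma_n}}{\oplus_nC^*(\wgn)^{\Gamma_n}}$ (both $C^*(X)$ and $\phi$ are defined with no bounded-geometry hypothesis on $X$), and $p$ itself lies in $C^*(X)$: as $X$ is a weak expander, $\Delta\in\C[X]$ has a spectral gap at $0$, so $p=\lim_{t\to\infty}e^{-t\Delta}$ converges in norm.

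The key new observation is that \emph{all but finitely many of the $\wgn$ are infinite}, which is forced by the weak expander condition. Indeed, for any $n$ with $|G_n|\geq2$ the hypothesis gives $\text{spectrum}(\Delta_n)\subseteq\{0\}\cup[c,1]$, so in particular $2\notin\text{spectrum}(\Delta_n)$; but $\Delta_n$ is the normalized Laplacian of the connected graph $G_n$, and $2$ lies in the spectrum of the normalized Laplacian of a connected graph precisely when that graph is bipartite. Thus $G_n$ is non-bipartite, so it contains an odd cycle, so it is not a tree, so $\pi_1(G_n)\neq1$ and its universal cover $\wgn$ is infinite. Since $|G_n|\to\infty$, this holds for all but finitely many $n$, so Lemma \ref{kazdies} applies and gives $\phi(p)=0$; hence $\phi_*[p]=0$ and $T(\phi_*[p])=0$ in $\frac{\prod_n\R}{\oplus_n\R}$. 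On the other hand, suppose $[p]$ were in the image of the coarse assembly map; then Lemma \ref{atlem} would give $T(\phi_*[p])=d_*[p]$, while $d_*[p]=[1,1,1,\dots]$ (each $p^{(n)}$ being the rank-one projection onto $\ker\Delta_n$, as recorded before Theorem \ref{negthe}) is nonzero in $\frac{\prod_n\Z}{\oplus_n\Z}\subseteq\frac{\prod_n\R}{\oplus_n\R}$. This contradiction completes the proof.

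I expect the only genuinely new step to be the elementary spectral remark that a weak expander consists of non-bipartite graphs, which is what guarantees that the universal covers are infinite trees and hence lets Lemma \ref{kazdies} run in the absence of bounded geometry; the rest is assembly of results already in hand, and the only thing to be careful about is that $C^*(X)$, the map $\phi$ of Lemma \ref{onllem}, and Lemma \ref{atlem} are all available for general proper metric spaces and do not secretly require bounded geometry.
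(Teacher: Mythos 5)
Your argument is correct and is essentially the paper's own proof, which likewise runs the Theorem \ref{negthe} argument with Lemma \ref{kazdies} substituted for Lemma \ref{ghostlem} and uses Lemma \ref{treelem} plus the girth hypothesis to get asymptotic faithfulness and uniform operator norm localization for the universal covers. Your spectral observation that condition (iii) of Definition \ref{exdef} forces the $G_n$ to be non-bipartite (hence not trees, hence with infinite universal covers) is a welcome explicit verification of a hypothesis of Lemma \ref{kazdies} that the paper's one-line proof leaves implicit.
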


\begin{proof}
The proof is the same as that for Theorem \ref{negthe}, using Lemma \ref{kazdies} in place of Lemma \ref{ghostlem}, and using Lemma \ref{treelem} and the girth property to show that the sequence of universal covers $\widetilde{X}=({\wgn})_{n\in\N}$ is asymptotically faithful and has uniform operator norm localization.
\end{proof}

\section{Injectivity of the coarse assembly map}\label{injsec}

Our main result in this section is the following theorem.

\begin{posthe}\label{posthe}
Say $X=\sqcup G_n$ is a sequence of finite graphs with large girth.  Then the coarse Novikov conjecture holds for $X$.
\end{posthe}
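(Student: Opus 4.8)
The plan is to exploit the asymptotic faithfulness of the covering sequence $\widetilde{X}=(\wgn)_{n\in\N}$ given by the \emph{universal covers} of the $G_n$ (which is asymptotically faithful precisely because $\gi(G_n)\to\infty$, by Example \ref{cfex} (i)), together with the two special features of this sequence: each $\wgn$ is a \emph{tree}, and each deck group $\Gamma_n=\pi_1(G_n)$ is a \emph{free group}. The technical vehicle is Yu's localization algebra $C^*_L(X)$, whose $K$-theory is canonically $\lim_{R\to\infty}K_*(P_R(X))$ and for which the evaluation $*$-homomorphism $ev\colon C^*_L(X)\to C^*(X)$ induces exactly the coarse assembly map $\mu$ on $K$-theory. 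Repeating the construction of Lemmas \ref{philem} and \ref{onllem} at the level of localization algebras -- this is legitimate since the trees $\wgn$ have the uniform operator norm localization property by Lemma \ref{treelem} -- produces a $*$-homomorphism $\Phi$ from $C^*_L(X)$ into $\prod_n C^*_L(\wgn)^{\Gamma_n}/\oplus_n C^*_L(\wgn)^{\Gamma_n}$ that is intertwined with the map $\phi$ of Lemma \ref{onllem} by the evaluation homomorphisms on both sides. Thus $\mu$ sits as the left-hand vertical arrow of a commuting square whose right-hand vertical arrow is the ``infinite product over $n$, modulo the restricted sum'' of the equivariant coarse assembly maps for the trees $\wgn$ with the free, cocompact $\Gamma_n$-actions.

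Granting this square, injectivity of $\mu$ follows from two statements, each proved by a diagram chase: (i) $\Phi_*$ is injective, and (ii) the right-hand vertical arrow is injective; for then $\mu[x]=0$ forces $ev_*\Phi_*[x]=\phi_*\mu[x]=0$, hence $\Phi_*[x]=0$ by (ii), hence $[x]=0$ by (i). Statement (i) is the ``soft'' part: at each finite scale $R$, asymptotic faithfulness gives a splitting $P_R(X)=P_0\sqcup\bigsqcup_{n\geq N_R}P_R(G_n)$ with $P_R(\wgn)/\Gamma_n=P_R(G_n)$, so that on $K$-homology $\Phi_*$ restricted to this scale is, modulo the restricted sum and modulo the ``finite part'' $P_0$ (which becomes contractible, piece by piece, as $R\to\infty$), the product of the $K$-homology induction isomorphisms $i^{\Gamma_n}$ of Lemma \ref{indlem}; taking the limit over $R$ yields injectivity of $\Phi_*$. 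Statement (ii) is where the strong Novikov conjecture for free groups enters: by Lemma \ref{bclem} each constituent map $K_*^{\Gamma_n}(\wgn)\to K_*(C^*(\wgn)^{\Gamma_n})$ is the Baum--Connes assembly map for the free group $\Gamma_n$, which is injective (indeed an isomorphism).

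The main obstacle is precisely the passage from ``each $\mu_{\Gamma_n}$ is injective'' to ``the induced map on $K_*$ of the product-modulo-restricted-sum algebra is injective''. This does not follow formally, because $K$-theory does not commute with infinite products: one must upgrade the strong Novikov conjecture for the family $(\Gamma_n)$ to a \emph{uniform} statement, so that the splittings of the individual assembly maps can be assembled into a splitting after descending to the quotient algebra $\prod_n C^*(\wgn)^{\Gamma_n}/\oplus_n C^*(\wgn)^{\Gamma_n}$. Here I would use that the trees $\wgn$ have asymptotic dimension one \emph{uniformly} (Lemma \ref{treelem}) together with the uniform operator norm localization property, running a Dirac--dual-Dirac / controlled-$K$-theory argument for the free groups $\Gamma_n$ acting on their trees with all the relevant constants independent of $n$; this is what forces the hypotheses of the theorem and is the genuine content of the proof, the rest being bookkeeping with the localization algebra and the scale-by-scale decomposition above.
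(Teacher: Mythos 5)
Your overall strategy is the paper's: lift along the asymptotically faithful sequence of universal covers, use the uniform operator norm localization property of trees to extend $\phi$ to $C^*(X)$, and reduce to injectivity of the equivariant assembly maps for the free groups $\Gamma_n$ acting on the trees $\wgn$. But your step (i) -- injectivity of $\Phi_*$ -- is false, and the diagram chase built on it breaks. Any class supported on finitely many of the blocks, e.g.\ the point class $[\mathrm{pt}]\in K_0(P_R(G_{17}))\subseteq K_0(P_R(X))$, lies in the kernel of the map to $\prod_n K_*^{\Gamma_n}(P_R(\wgn))/\oplus_n K_*^{\Gamma_n}(P_R(\wgn))$ (it falls into the restricted sum), yet it is non-zero in $\lim_R K_*(P_R(X))$ and has non-zero image under $\mu$ (a finite-rank projection, whose class survives in $K_0(C^*(X))$ for these disjoint-union spaces). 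The ``finite part'' does not become negligible as $R\to\infty$: $N_R\to\infty$ with $R$, so $X_{N_R}$ keeps growing. The paper handles exactly this via the exact column in diagram \eqref{asymcom}: the kernel of the map to the bottom-left group is precisely the image of $K_*(P_R(X_{N_R}))\oplus\bigoplus_{n\geq N_R}K_*(P_R(G_n))$, and those classes are detected by the \emph{top} row of the diagram -- assembly for the finite pieces is an isomorphism in the limit, and $K_*(C^*(X_{N_R}))\oplus\bigoplus K_*(C^*(G_n))\to K_*(C^*(X))$ is injective in the limit (both borrowed from Guentner--Tessera--Yu). Your argument needs this extra row; without it the kernel of $\Phi_*$ is simply not controlled.

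Conversely, the step you single out as ``the genuine content'' -- upgrading injectivity of each $\mu_{\Gamma_n}$ to injectivity of the product-modulo-restricted-sum map -- does not require a uniform controlled or Dirac--dual-Dirac argument. By Lemma \ref{morlem} each $C^*(\wgn)^{\Gamma_n}\cong C^*_r(\Gamma_n)\otimes\mathcal{K}$ is stable, so $K$-theory commutes with the infinite product of these algebras; combined with the degenerate six-term sequence for $0\to\oplus_n C^*(\wgn)^{\Gamma_n}\to\prod_n C^*(\wgn)^{\Gamma_n}\to\prod/\oplus\to 0$ this gives $K_*\bigl(\prod_n C^*(\wgn)^{\Gamma_n}/\oplus_n C^*(\wgn)^{\Gamma_n}\bigr)\cong\prod_n K_*(C^*(\wgn)^{\Gamma_n})/\oplus_n K_*(C^*(\wgn)^{\Gamma_n})$, under which $\mu_\infty$ becomes the product-mod-sum of the individual Baum--Connes assembly maps for the free groups $\Gamma_n$. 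Injectivity is then formal from Pimsner--Voiculescu/Kasparov, with no uniformity in $n$ needed. So the real work in the proof sits where you placed the ``soft'' part, and the part you flagged as hard is a two-line consequence of stability.
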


\begin{injgen}\label{injgen}
The methods below, which are similar to those from \cite{Guentner:2008gd}, could be used to get somewhat more general results, for example by combining the results of \cite{Guentner:2008gd} for groups with the possibility of considering families of covering spaces as in the current context.  However, we could not give a particularly `clean' statement along these lines, so restrict ourselves to the theorem above.
\end{injgen}

The proof requires some preliminaries.  Assume throughout this section that $X=\sqcup G_n$ is as in the statement of the theorem.  Let $\wgn$ be the universal cover of $G_n$; note that the girth assumption implies that $\widetilde{X}:=(\wgn)_{n\in\N}$ is an asymptotically faithful covering sequence for $X$ as in Example \ref{cfex} (i).  

In \cite{Guentner:2008gd}, Guentner, Tessera and the second author consider a commutative diagram very close to that below (precisely, in \cite{Guentner:2008gd} a uniform product is used in the bottom left corner; the non-uniform product is more convenient for our current purposes, however, and makes no real difference).
\begin{equation}\label{asymcom}
\xymatrix{ 0 \ar[d]&  \\ 
K_*(P_R(X_{N_R}))\oplus\oplus_{n\geq N_R}K_*(P_R(G_n)) \ar[d] \ar[r]& K_*(C^*(X_{N_R}))\oplus\oplus_{n\geq N_R}K_*(C^*(G_n)) \ar[d] \\ 
K_*(P_R(X)) \ar[d] \ar[r] & K_*(C^*(X)) \ar[d]^{\phi_*} \\
\frac{\prod_n K_*^{\Gamma_n}(P_R(\wgn))}{\oplus_n K_*^{\Gamma_n}(P_R(\wgn))} \ar[r] \ar[d] & K_*\Big(\frac{\prod_n C^*(\wgn)^{\Gamma_n}}{\oplus_n C^*(\wgn)^{\Gamma_n}}\Big) \\ 0 & }
\end{equation}
Here $N_R$ is such that $d(G_n,G_m)\geq R$ for all $m$ and all $n\geq N_R$, and $X_{N_R}=\sqcup_{n=0}^{N_R}G_n\subseteq X$.  The horizontal maps are assembly maps or products of assembly maps as appropriate.  The sequence on the left is exact, as argued in \cite{Guentner:2008gd}, and the top horizontal map and top right vertical map are respectively an isomorphism and an injection as $R\to\infty$, again as argued in that paper (none of these facts are very difficult).  The proof of Theorem \ref{posthe} (which is the statement that the central horizontal map is injective) will thus be completed by the following lemma and a diagram chase.

\begin{asinj}\label{asinj}
For any $R\geq 1$, the assembly map
$$
\mu_\infty:\frac{\prod_n K_*^{\Gamma_n}(P_R(\wgn))}{\oplus_n K_*^{\Gamma_n}(P_R(\wgn))} \to K_*\Big(\frac{\prod_n C^*(\wgn)^{\Gamma_n}}{\oplus_n C^*(\wgn)^{\Gamma_n}}\Big)
$$
(closely related to the \emph{relative assembly map} of \cite[Section 4]{Guentner:2008gd}) is injective.
\end{asinj}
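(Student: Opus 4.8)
The plan is to reduce injectivity of $\mu_\infty$ to the strong Novikov conjecture for free groups applied termwise, and then to check that passing to the quotient algebra loses nothing because the Roe algebras in play are stable.

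First I would identify the individual assembly maps. Since $\wgn$ is the universal cover of a graph it is a tree, and $\Gamma_n=\pi_1(G_n)$ is a finitely generated free group acting on it freely, properly and cocompactly; being a tree, $\wgn$ has contractible Rips complexes, and since $\Gamma_n$ is torsion-free the action on $P_R(\wgn)$ stays free, so for every $R\ge 1$ the space $P_R(\wgn)$ is a free cocompact model for $E\Gamma_n$. Hence Lemma~\ref{indlem} gives $K_*^{\Gamma_n}(P_R(\wgn))\cong K_*(P_R(G_n))\cong K_*^{\mathrm{top}}(\Gamma_n)$, and Lemmas~\ref{morlem} and~\ref{bclem} identify $\mu_{\Gamma_n}\colon K_*^{\Gamma_n}(P_R(\wgn))\to K_*(C^*(\wgn)^{\Gamma_n})$, via $K_*(C^*(\wgn)^{\Gamma_n})\cong K_*(C^*_r(\Gamma_n))$, with the Baum--Connes assembly map of $\Gamma_n$. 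The strong Novikov conjecture for free groups makes each $\mu_{\Gamma_n}$ injective.

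Next I would assemble these. The product $\prod_n\mu_{\Gamma_n}$ is injective and carries $\oplus_n$ into $\oplus_n$, so it descends to an injective map $\bar\mu$ on the quotients; the point is then to see that $\mu_\infty$ \emph{is} $\bar\mu$ up to a natural identification. By its construction (compare Lemma~\ref{atlem}), $\mu_\infty$ is the composite of $\prod_n\mu_{\Gamma_n}$ with
$$
\prod_n K_*(C^*(\wgn)^{\Gamma_n})\ \xrightarrow{\ \cong\ }\ K_*\Big(\prod_n C^*(\wgn)^{\Gamma_n}\Big)\ \longrightarrow\ K_*\Big(\frac{\prod_n C^*(\wgn)^{\Gamma_n}}{\oplus_n C^*(\wgn)^{\Gamma_n}}\Big),
$$
the first arrow being an isomorphism because the $C^*(\wgn)^{\Gamma_n}$ are stable (the same fact already invoked in Lemma~\ref{atlem}). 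Feeding this into the six-term sequence of the ideal $\oplus_n C^*(\wgn)^{\Gamma_n}$ in $\prod_n C^*(\wgn)^{\Gamma_n}$, and noting that under the stability identification the ideal inclusion induces just the injective inclusion $\oplus_n K_*(C^*(\wgn)^{\Gamma_n})\hookrightarrow\prod_n K_*(C^*(\wgn)^{\Gamma_n})$, the connecting maps vanish and one gets a natural isomorphism between $\prod_n K_*(C^*(\wgn)^{\Gamma_n})/\oplus_n K_*(C^*(\wgn)^{\Gamma_n})$ and $K_*\big(\prod_n C^*(\wgn)^{\Gamma_n}/\oplus_n C^*(\wgn)^{\Gamma_n}\big)$. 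Composing, $\mu_\infty$ is identified with $\bar\mu$, hence injective.

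The only non-formal ingredient above is the strong Novikov conjecture for free groups; the rest is bookkeeping, and the parts of it that need genuine care are the equivariant homotopy equivalence $P_R(\wgn)\simeq\wgn$ together with enough naturality in Lemmas~\ref{morlem}, \ref{bclem} and~\ref{indlem} to make $\mu_\infty$ literally the bottom map of diagram \eqref{asymcom}, and the stability/product-commutation fact. If one wished to avoid the latter, the robust substitute --- and I expect this is the route "similar to \cite{Guentner:2008gd}" referred to in Remark~\ref{injgen} --- is to build a \emph{uniform} dual-Dirac (Julg--Valette) element from the actions of $\Gamma_n$ on the trees $\wgn$ and use it to construct a one-sided inverse to $\mu_\infty$ directly over the quotient $C^*$-algebra, with the operator norm localization property and asymptotic faithfulness of $(\wgn)$ doing the analytic work. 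On that route the hard part is the uniformity in $n$: the ranks of the $\Gamma_n$ are unbounded, so the dual-Dirac data must be controlled purely by the large-scale geometry of the trees, which is uniform precisely because all the $\wgn$ embed isometrically in the single tree of countably infinite valence (cf.\ the proof of Lemma~\ref{treelem}).
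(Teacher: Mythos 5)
Your proposal is correct and follows essentially the same route as the paper: injectivity of each equivariant assembly map $\mu_{\Gamma_n}$ via the identification of $P_R(\wgn)$ with a cocompact model for the universal space of the free group $\Gamma_n$ (this is Proposition \ref{hkthe}), followed by the identification $K_*\bigl(\prod_n C^*(\wgn)^{\Gamma_n}/\oplus_n C^*(\wgn)^{\Gamma_n}\bigr)\cong\prod_n K_*(C^*(\wgn)^{\Gamma_n})/\oplus_n K_*(C^*(\wgn)^{\Gamma_n})$ using stability and the degenerate six-term sequence, under which $\mu_\infty$ becomes the quotient of the product of the $\mu_{\Gamma_n}$. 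The alternative uniform dual-Dirac route you sketch at the end is not what the paper does here.
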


The following proposition is originally due to Pimsner--Voiculescu \cite{Pimsner:1982pt} and Kasparov \cite{Kasparov:1984dw} (both of whom actually proved that the map is an isomorphism).  There are now many relatively elementary proofs due to many different authors, however, especially if one is only interested in injectivity: for example, it follows from the many available proofs of the coarse Baum-Connes conjecture for a tree.

\begin{hkthe}\label{hkthe}
Let $\Ga_n$, $\wgn$ be as in the above.  Then for any $R\geq 1$ the equivariant assembly map
$$
\mu_{\Gamma_n}:K^{\Gamma_n}_*(P_R(\wgn)) \to K_*(C^*(\wgn)^{\Gamma_n})
$$
is injective.
\end{hkthe}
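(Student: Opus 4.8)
The plan is to identify the equivariant assembly map $\mu_{\Gamma_n}$, for each fixed $R\geq 1$, with the Baum--Connes assembly map $\mu_{BC}\colon K^{top}_*(\Gamma_n)\to K_*(C^*_r(\Gamma_n))$ of the free group $\Gamma_n$, and then to invoke the known validity (indeed, the full isomorphism statement) of the Baum--Connes conjecture for free groups.

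First I would record the structural facts. Since $\wgn$ is the universal cover of the finite connected graph $G_n$, it is a tree, its deck group $\Gamma_n=\pi_1(G_n)$ is a free group (of finite rank $1-\chi(G_n)$, though the rank plays no role), and $\Gamma_n$ acts freely, properly and cocompactly on $\wgn$ by isometries; as a metric space $\wgn$ is proper and uniformly discrete. Next I would check that for every $R\geq 1$ the Rips complex $P_R(\wgn)$ is contractible --- this is the standard and elementary fact that the Rips complex of a tree (more generally of a $0$-hyperbolic geodesic space) is contractible at every scale, since any simplex lies in a ball and a tree admits canonical centres --- and that the $\Gamma_n$-action extends to a free, proper, cocompact action on $P_R(\wgn)$. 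Freeness uses that $\Gamma_n$ is torsion-free: a nontrivial $g\in\Gamma_n$ fixing a point of $P_R(\wgn)$ would fix the simplex $\sigma$ containing it, hence permute the finitely many vertices of $\sigma$, so some power $g^k$ would fix a vertex of $\wgn$, contradicting freeness of the deck action; cocompactness holds because $P_R(\wgn)$ has the same ($\Gamma_n$-cocompact) vertex set as $\wgn$ and only finitely many $\Gamma_n$-orbits of simplices. Consequently $P_R(\wgn)$ is a free, proper, cocompact, contractible $\Gamma_n$-CW-complex, hence a model for $\underline{E}\Gamma_n=E\Gamma_n$ (the two coincide as $\Gamma_n$ is torsion-free); in particular the inclusion $\wgn\hookrightarrow P_R(\wgn)$ of the underlying tree is a $\Gamma_n$-homotopy equivalence, the transition maps of the system $(P_R(\wgn))_{R\geq 1}$ are $\Gamma_n$-homotopy equivalences, and $K_*^{\Gamma_n}(P_R(\wgn))\cong K_*^{\Gamma_n}(\wgn)=K^{top}_*(\Gamma_n)$ compatibly with assembly.

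With this in hand I would apply Lemma \ref{morlem} to identify $K_*(C^*(\wgn)^{\Gamma_n})$ with $K_*(C^*_r(\Gamma_n))$, and Lemma \ref{bclem} (with the metric space $X=\wgn$) to identify the assembly map $\mu_{\Gamma_n}\colon\lim_R K_*^{\Gamma_n}(P_R(\wgn))\to K_*(C^*(\wgn)^{\Gamma_n})$ with $\mu_{BC}\colon K^{top}_*(\Gamma_n)\to K_*(C^*_r(\Gamma_n))$; since every $P_R(\wgn)$ with $R\geq 1$ is already a model for $E\Gamma_n$, the colimit is attained at each finite stage, so the map in the statement is, for each fixed $R\geq 1$, exactly $\mu_{BC}$ for $\Gamma_n$. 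It then remains only to quote that $\mu_{BC}$ is injective (in fact an isomorphism) for the free group $\Gamma_n$: this is the theorem of Pimsner--Voiculescu \cite{Pimsner:1982pt} and Kasparov \cite{Kasparov:1984dw}; alternatively, as $\wgn$ is a tree and so has asymptotic dimension one, it also follows from any of the standard proofs of the coarse Baum--Connes conjecture for trees together with the descent principle comparing the coarse assembly map for $|\Gamma_n|$ with $\mu_{BC}$. I do not anticipate a serious obstacle here: the only genuinely nontrivial ingredient is the classical Novikov/Baum--Connes statement for free groups, and the one mildly delicate bookkeeping point is verifying that $P_R(\wgn)$ is a model for $E\Gamma_n$ at each fixed scale, which is what licenses the application of Lemma \ref{bclem} to the fixed-$R$ map rather than only to the direct limit.
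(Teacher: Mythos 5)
Your proposal is correct and follows essentially the same route as the paper: observe that $P_R(\wgn)$ is a contractible complex on which the free group $\Gamma_n$ acts freely, properly and cocompactly, hence a cocompact model for the universal proper $\Gamma_n$-space, identify $\mu_{\Gamma_n}$ with the Baum--Connes assembly map for $\Gamma_n$ via Lemmas \ref{morlem} and \ref{bclem}, and quote Pimsner--Voiculescu and Kasparov. The paper's proof is a terser version of exactly this argument; your additional verifications (contractibility of the Rips complex of a tree, freeness via torsion-freeness, cocompactness) are correct details that the paper leaves implicit.
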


\begin{proof}
Using the fact that $\wgn$ is a tree on which $\Gamma_n$ acts freely and properly, for any $R\geq 1$, $P_R(\wgn)$ is a contractible simplicial complex on which $\Ga_n$ acts freely, properly and compactly.  Hence $P_R(\wgn)$ is a (cocompact) example of a universal proper $\Gamma_n$-space for any $R\geq 1$, whence the map above identifies with the Baum-Connes assembly map for the free group $\Gamma_n$ in this case.  This, however, is injective (in fact, an isomorphism) by the references cited above.
\end{proof}

\begin{proof}[Proof of Lemma \ref{asinj}]
Note first that using the degeneracy of the $K$-theory six term exact sequence associated to the short exact sequence
$$
0\to\oplus_n C^*(\wgn)^{\Gamma_n}\to\prod_n C^*(\wgn)^{\Gamma_n}\to \frac{\prod_n C^*(\wgn)^{\Gamma_n}}{\oplus_n C^*(\wgn)^{\Gamma_n}}\to 0,
$$
and stability of the algebras $C^*(\wgn)^{\Gamma_n}$, there is an isomorphism in $K$-theory
$$
K_*\Big(\frac{\prod_n C^*(\wgn)^{\Gamma_n}}{\oplus_n C^*(\wgn)^{\Gamma_n}}\Big)\cong\frac{\prod_n K_*(C^*(\wgn)^{\Gamma_n})}{\oplus_n K_*(C^*(\wgn)^{\Gamma_n})}.
$$
Using this isomorphism to identify the two groups involved, we see that $\mu_\infty$ identifies with the map
$$
\frac{\prod\mu_{\Ga_n}}{\oplus\mu_{\Ga_n}}:\frac{\prod_n K_*^{\Gamma_n}(P_R(\wgn))}{\oplus_n K_*^{\Gamma_n}(P_R(\wgn))} \to \frac{\prod_n K_*(C^*(\wgn)^{\Gamma_n})}{\oplus_n K_*(C^*(\wgn)^{\Gamma_n})},
$$
where the maps $\mu_{\Gamma_n}$ are the assembly maps from Proposition \ref{hkthe}; as these are all injective, $\mu_\infty$ is also injective.
\end{proof}

\section{The Baum-Connes conjecture with coefficients, and `Gromov monsters'}\label{bcsec}

The \emph{Baum-Connes conjecture with coefficients} predicts that the Baum-Connes assembly map
$$
\mu:\lim_{R\to\infty}KK^\Ga_*(C_0(P_R(\Ga)),A)\to K_*(A\rtimes_r\Ga)
$$
is an isomorphism for any discrete group $\Ga$ and $\Ga$-$C^*$-algebra $A$ (it also makes sense for non-discrete groups -- see \cite[Section 9]{Baum:1994pr}).

In \cite{Higson:2002la}, Higson, Lafforgue and Skandalis used certain groups that Gromov had shown to exist \cite{Gromov:2003gf} (a complete exposition of Gromov's construction of these so-called \emph{Gromov monster} groups is available in \cite{Arzhantseva:2008bv}, due to Arzhantseva and Delzant) to construct counterexamples to the Baum-Connes conjecture with coefficients.  Precisely, they show that for a Gromov monster group $\Ga$ there exist (commutative) $\Ga$-$C^*$-algebras $A_1,A_2$ such that either: the Baum-Connes assembly map with coefficients in $A_1$ fails to be injective; or, the Baum-Connes assembly map with coefficients in $A_2$ fails to be surjective.  

In this section we use our results on expanders with large girth to deduce somewhat more refined results: we show that for a Gromov monster group $\Ga$ there exists a $\Ga$-$C^*$-algebra $A$ such that the Baum-Connes assembly map with coefficients in $A$ is injective and not surjective.  Moreover, applying the main result from the second paper in this series \cite{Willett:2010zh}, we also deduce that the maximal Baum-Connes assembly map with coefficients in $A$ is an isomorphism.  The existence of an example for which the maximal Baum-Connes assembly map is an isomorphism, but the usual version is not, is perhaps surprising.  Note that one can assume that a Gromov monster $\Ga$ has property (T), in which case the maximal Baum-Connes assembly map is certainly not surjective for $\Ga$ itself (yet is injective, using that $\Ga$ is a direct limit of hyperbolic groups).

The following definition makes clear what we mean by `Gromov monster'.  Recall that any countable discrete group $\Gamma$ can be equipped with a left-invariant (i.e.\ $d(gx,gy)=d(x,y)$ for all $g,x,y\in\Gamma$) and proper (i.e.\ balls are finite) metric; moreover, such a metric is unique up to coarse equivalence.  When speaking of metric properties of $\Gamma$ we are implicitly assuming that such a metric has been chosen (which such metric makes no difference to the coarse geometry).  Write $l(g):=d(e,g)$ for the length function associated to $d$.

\begin{gm}\label{gm}
A countable discrete group $\Gamma$ is called a \emph{Gromov monster} if there exists an expander $X=\sqcup G_n$ with large girth and a coarse embedding $f:X\to\Gamma$.
\end{gm}
\noindent
The girth condition (actually something rather stronger -- see \cite{Arzhantseva:2008bv} for details) is necessary to make Gromov's construction work; thus we are not really imposing extra conditions on `Gromov monsters' by assuming it.

\begin{bccoeff}\label{bccoeff}
Say $\Gamma$ is a Gromov monster.  Then there exists a $C^*$-algebra $A$ equipped with a $\Gamma$-action such that:
\begin{enumerate}[(i)]
\item the Baum-Connes assembly map with coefficients in $A$ 
$$
\mu:\lim_{R\to\infty}KK_*^\Gamma(C_0(P_R(\Gamma)),A)\to K_*(A\rtimes_r\Gamma)
$$
fails to be surjective;
\item the Baum-Connes assembly map with coefficients in $A$ is injective;
\item the maximal Baum-Connes assembly map with coefficients in $A$
$$
\mu:\lim_{R\to\infty}KK_*^\Gamma(C_0(P_R(\Gamma)),A)\to K_*(A\rtimes_{max}\Gamma)
$$
is an isomorphism.
\end{enumerate}
\end{bccoeff}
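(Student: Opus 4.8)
The plan is to deduce all three statements from the corresponding facts about the coarse assembly map of $X$, via the translation between coarse and equivariant (Baum--Connes) assembly for a bounded geometry space that coarsely embeds in a group; this idea goes back to the second author \cite{Yu:1995zl} and was used by Higson--Lafforgue--Skandalis \cite{Higson:2002la} (who obtained the non-surjectivity conclusion, but only a dichotomy for injectivity). Fix once and for all an expander $X=\sqcup G_n$ with large girth and a coarse embedding $f\colon X\to\Gamma$, which exists by the definition of Gromov monster. Since $X$ is an expander it has bounded geometry, and after a bounded modification we may assume $f$ is injective (a coarse embedding is automatically uniformly finite-to-one, using bounded geometry of $X$), with image $f(X)=\sqcup Z_n\subseteq\Gamma$ where the finite sets $Z_n=f(G_n)$ are coarsely disjoint and escape to infinity in $\Gamma$ because $d_X(G_n,G_m)\to\infty$.

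Next I would build the coefficient algebra. Following \cite[Section~6]{Higson:2002la}, the configuration $\sqcup Z_n\subseteq\Gamma$ gives rise to a $\Gamma$-$C^*$-algebra $A$ of $\mathcal{K}$-valued functions on $\Gamma$ that are ``controlled by $X$'', with $\Gamma$ acting by translation; this is the algebra in the statement. For the final assertion about commutative coefficients one runs the same argument inside \v{S}pakula's uniform framework as in Appendix~\ref{unicase}: the analogue of $A$ built from \emph{scalar}-valued functions is commutative, and Theorem~\ref{unithe} takes the place of Theorems~\ref{negthe} and \ref{posthe}.

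The technical heart is an identification of assembly maps: a commuting square
$$
\xymatrix{
\lim_{R\to\infty}KK^\Gamma_*(C_0(P_R(\Gamma)),A)\ar[r]^-{\mu_{\Gamma,A}}\ar[d]_-{\cong} & K_*(A\rtimes_r\Gamma)\ar[d]^-{\cong} \\
\lim_{R\to\infty}K_*(P_R(X))\ar[r]^-{\mu_X} & K_*(C^*(X))
}
$$
with vertical isomorphisms, together with its analogue for the maximal completions $A\rtimes_{max}\Gamma$ and $C^*_{max}(X)$. To produce this I would: (1) identify $\mu_{\Gamma,A}$ with the equivariant coarse assembly map of $\Gamma$ with coefficients in $A$, using a coefficient version of Lemma~\ref{bclem} together with a coefficient version of Lemma~\ref{morlem} giving $K_*(C^*_{(max)}(P_R(\Gamma);A)^\Gamma)\cong K_*(A\rtimes_{(r,max)}\Gamma)$, where one uses the free, proper, cocompact action of $\Gamma$ on $P_R(\Gamma)$, and bounded geometry of $\Gamma$ for the maximal case; (2) use the coarse embedding $f$ and the control built into $A$ to show that $C^*_{(max)}(P_R(\Gamma);A)^\Gamma$ is stably isomorphic to $C^*_{(max)}(P_{R'}(X))$ for some $R'=R'(R)\to\infty$, compatibly with the structure maps in $R$, so the right-hand columns agree in the limit; (3) carry out the parallel identification on $K$-homology, matching $\lim_R KK^\Gamma_*(C_0(P_R(\Gamma)),A)$ with $\lim_R K_*(P_R(X))$; and (4) check that $\mu_{\Gamma,A}$ and $\mu_X$ intertwine these identifications, which is a naturality statement of the same flavour as the abstract Atiyah $\Gamma$-index theorem (Theorem~\ref{atiyahthe}): localize, then lift.

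Granting the square, the three conclusions follow at once. For (i): $X$ is a weak expander of bounded geometry, so the basic Kazhdan projection $p\in C^*(X)$ from Example~\ref{ghostex}(i) is a non-compact ghost projection, and the universal covers of the $G_n$ form a sequence of trees, hence an asymptotically faithful covering sequence (Example~\ref{cfex}(i)) with the uniform operator norm localization property (Lemma~\ref{treelem}); by Theorem~\ref{negthe} the class $[p]$ is not in the image of $\mu_X$, so the corresponding class in $K_0(A\rtimes_r\Gamma)$ is not in the image of $\mu_{\Gamma,A}$. For (ii): $\mu_X$ is injective by Theorem~\ref{posthe} (coarse Novikov for large girth), hence so is $\mu_{\Gamma,A}$. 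For (iii): the maximal coarse assembly map for $X$ is an isomorphism by Theorem~1.1 of the second paper \cite{Willett:2010zh}, which applies since $X$, being an expander, has a uniform degree bound; hence the maximal assembly map with coefficients in $A$ is an isomorphism. The main obstacle is steps~(2)--(4): transporting the entire coarse-Baum--Connes package for $X$ through the coarse embedding into the equivariant package for $(\Gamma,A)$ while keeping precise track of Rips scales, operator propagation, and the distortion functions $\rho_{\pm}$ of $f$, and in the maximal case reconciling the universal properties of $A\rtimes_{max}\Gamma$ and $C^*_{max}(P_R(\Gamma);A)^\Gamma$ with the maximal Roe algebra of $X$ (this is where bounded geometry of $X$ and of $\Gamma$ enters). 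The remaining points are bookkeeping with results already established.
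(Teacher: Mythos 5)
Your proposal is correct and follows essentially the same route as the paper: the coefficient algebra is (in the paper's formulation) the direct limit of the algebras $l^\infty(X_n,\mathcal{K})$ over the $n$-neighbourhoods $X_n$ of $f(X)$ in $\Gamma$, the crossed products $A\rtimes_{r\,(max)}\Gamma$ are identified with $\lim_n C^*_{(max)}(X_n)$, the assembly map with coefficients is identified with the limit of the coarse assembly maps for the $X_n$ (via \cite{Yu:1995zl}), and since each $X_n$ is coarsely equivalent to $X$ this reduces everything to Theorems \ref{negthe} and \ref{posthe} and Theorem 1.1 of \cite{Willett:2010zh}, exactly as you conclude. The only point you gloss that the paper makes explicit is the use of the increasing neighbourhoods $X_n$ to obtain a $\Gamma$-invariant algebra (translation by $g$ moves $A_n$ into $A_{n+l(g)}$), which is the precise meaning of your phrase ``controlled by $X$''.
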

\noindent
Part (iii) is a corollary of our result on the maximal coarse assembly map for expanders with large girth in the second paper of this series \cite{Willett:2010zh}, which we use here without proof.

The choice of the coefficient module $A$ is such that it `captures' the coarse geometric information that we have been studying in the previous sections.  

\begin{adef}\label{adef}
Let $\Gamma$ be a Gromov monster, equipped with a coarse embedding of an expander
$$f:X\to \Gamma$$ 
as in the definition.  For each $n\in\N$, define
$$
X_n=\{g\in\Gamma~|~d(g,f(X))\leq n\},
$$
i.e.\ $X_n$ is the $n$-neighbourhood of $f(X)$ in $\Gamma$.  Let $A_n=l^\infty(X_n,\mathcal{K})\subseteq l^\infty(\Ga,\mathcal{K})$.  Note that $(A_n)_{n\in\N}$ is a directed system; we may thus define.
$$
A=\lim_{n\to\infty}A_n
$$
(equivalently, $A$ is the $C^*$-subalgebra of $l^\infty(\Ga,\mathcal{K})$ generated by the $A_n$s).
\end{adef} 

Note now that if $g,x\in\Ga$ then $d(x,xg)=l(g)$.  Hence the natural right action of $g\in \Ga$ on $l^\infty(\Ga,\mathcal{K})$ maps $A_n$ into $A_{n+l(g)}$; in particular, this action preserves $A$, so restricts to a (right) $\Ga$ action on $A$. 

The following basic properties of this action on $A$, together with Theorems \ref{negthe} and \ref{posthe} and \cite[Theorem 1.1]{Willett:2010zh}, are essentially all we need to complete the proof of Theorem \ref{bccoeff}

\begin{alem}\label{alem}
If $A\rtimes_r\Ga$, $A\rtimes_{max}\Ga$ denote the reduced and maximal crossed products of $A$ with respect to the action described above, then there are canonical isomorphisms
$$
A\rtimes_r\Gamma\cong\lim_{n\to\infty}C^*(X_n)~~\text{ and }~~A\rtimes_{max}\Gamma\cong \lim_{n\to\infty}C^*_{max}(X_n).
$$
\end{alem}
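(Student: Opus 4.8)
The plan is to exhibit a single dense $*$-subalgebra sitting inside both sides of each claimed isomorphism, and then to check that the two ambient completions coincide. Write $\mathcal{A}=\big(\bigcup_n A_n\big)\rtimes_{\mathrm{alg}}\Ga$; since $\bigcup_n A_n$ is a $\Ga$-invariant dense $*$-subalgebra of $A$, $\mathcal{A}$ is dense in $A\rtimes_r\Ga$ and in $A\rtimes_{max}\Ga$. A typical element of $\mathcal{A}$ is a finite sum $\sum_g a_g u_g$ with all $a_g$ in a common $A_m=l^\infty(X_m,\mathcal{K})$. Representing $\mathcal{A}$ in the natural covariant way on $l^2(\Ga,\Hi)$ — with $A\subseteq l^\infty(\Ga,\mathcal{K})$ acting diagonally and $\Ga$ by translation — the operator $a_g u_g$ has matrix coefficients supported on the pairs $(x,y)$ with $y=xg$ (with the conventions of the action), so has propagation at most $l(g)$; and since $a_g$ is supported on $X_m$ and $d(x,xg)=l(g)$, its matrix is supported in $X_{m+l(g)}\times X_{m+l(g)}$. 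Hence $\mathcal{A}$ maps into $\bigcup_k\C[X_k]$. Conversely, any $T\in\C[X_k]$ of propagation at most $R$ can be written $T=\sum_{l(g)\le R}a_g u_g$ with $a_g(x):=T_{x,xg}$, where local compactness of $T$ and bounded geometry of $\Ga$ force $a_g\in A_k$ and only finitely many $g$ to occur. This produces a $*$-isomorphism $\Phi\colon\mathcal{A}\xrightarrow{\cong}\bigcup_k\C[X_k]$ intertwining the inclusions $\C[X_k]\hookrightarrow\C[X_{k+1}]$; in particular $\bigcup_k\C[X_k]$ is dense in $\varinjlim_k C^*(X_k)$ and in $\varinjlim_k C^*_{max}(X_k)$.

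For the reduced statement I would invoke the standard identification $l^\infty(\Ga,\mathcal{K})\rtimes_r\Ga\cong C^*(\Ga)$ (the reduced crossed product of $\Ga$ acting by translation on $l^\infty(\Ga)\otimes\mathcal{K}$ is the stabilised uniform Roe algebra of $\Ga$, which is $C^*(\Ga)$ because $\Ga$ has bounded geometry). As $A$ is a $\Ga$-invariant $C^*$-subalgebra of $l^\infty(\Ga,\mathcal{K})$, the reduced crossed product $A\rtimes_r\Ga$ embeds isometrically into $C^*(\Ga)$, the regular representation of the smaller crossed product being a restriction of that of the larger. Under this embedding $\mathcal{A}$ maps onto $\bigcup_k\C[X_k]$, while for each $k$ one has $C^*(X_k)=1_{X_k}C^*(\Ga)1_{X_k}$, where $1_{X_k}$ denotes the orthogonal projection of $l^2(\Ga,\Hi)$ onto $l^2(X_k,\Hi)$: compressing a finite-propagation locally compact operator by $1_{X_k}$ gives such an operator on $l^2(X_k,\Hi)$, and every element of $\C[X_k]$ extends back by zero. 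Therefore $A\rtimes_r\Ga$, being the norm closure of the image of $\mathcal{A}$ in $C^*(\Ga)$, equals $\overline{\bigcup_k C^*(X_k)}=\varinjlim_k C^*(X_k)$.

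For the maximal statement there is no containing algebra like $C^*_{max}(\Ga)$ to work in, so I would instead build mutually inverse $*$-homomorphisms using universal properties. For each $k$, the composite $\C[X_k]\xrightarrow{\Phi^{-1}}\mathcal{A}\hookrightarrow A\rtimes_{max}\Ga$ extends to $C^*_{max}(X_k)\to A\rtimes_{max}\Ga$, since any covariant representation of $(A,\Ga)$ restricts along $\C[X_k]\hookrightarrow\mathcal{A}$ to a $*$-representation of $\C[X_k]$, which is automatically bounded by the maximal Roe norm $\|\cdot\|_{C^*_{max}(X_k)}$; these maps respect the connecting maps and so assemble into a surjection $\varinjlim_k C^*_{max}(X_k)\to A\rtimes_{max}\Ga$, surjective because its image contains the dense subalgebra $\mathcal{A}$. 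Conversely, the diagonal multiplications by the elements of $\bigcup_n A_n$ together with the translation unitaries $u_g$ should define a covariant homomorphism of $(A,\Ga)$ into the multiplier algebra of $\varinjlim_k C^*_{max}(X_k)$, whence the universal property of $A\rtimes_{max}\Ga$ yields a $*$-homomorphism $A\rtimes_{max}\Ga\to\varinjlim_k C^*_{max}(X_k)$; one then checks on the dense subalgebra $\mathcal{A}$ that the two $*$-homomorphisms are mutually inverse.

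I expect the main obstacle to be exactly this last construction in the maximal case: one must verify that multiplication by a diagonal operator $a\in A_n$, and by a translation $u_g$, is continuous for each maximal Roe norm $\|\cdot\|_{C^*_{max}(X_k)}$ and is compatible with the connecting maps $C^*_{max}(X_k)\hookrightarrow C^*_{max}(X_{k+1})$ — and, in particular, that these connecting maps are injective, so that $\varinjlim_k C^*_{max}(X_k)$ behaves as expected. The rest is bookkeeping organised around the decomposition $T=\sum_g a_g u_g$ and the estimate $d(x,xg)=l(g)$, which is what couples propagation in $\Ga$ to the neighbourhoods $X_k$.
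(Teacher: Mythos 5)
Your proposal is correct and follows essentially the same route as the paper: the heart of both arguments is the identification of the algebraic crossed product $\big(\bigcup_n A_n\big)\rtimes_{\mathrm{alg}}\Ga$ with $\bigcup_k\C[X_k]$ via the decomposition $T=\sum_g a_gu_g$, $a_g(x)=T_{x,xg}$, which is exactly the restriction of the standard isomorphism $\C[|\Ga|]\cong l^\infty(\Ga,\mathcal{K})\rtimes_{\mathrm{alg}}\Ga$ that the paper invokes. Your treatment of the completions (the corner argument $C^*(X_k)=1_{X_k}C^*(|\Ga|)1_{X_k}$ in the reduced case, and the two-way universal-property maps in the maximal case) just makes explicit what the paper compresses into the observations that $*$-representations of $A_{\mathrm{alg}}$ extend to $A$ and that the maximal completion of the algebraic limit agrees with the direct limit of the maximal completions.
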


\begin{proof}
Let $A_{alg}$ be the algebraic direct limit of the $A_n$ (equivalently, $*$-algebra generated by all the $A_n$s in $l^\infty(\Ga,\mathcal{K})$).  As any $*$-representation of $A_{alg}$ extends uniquely to a $*$-representation of $A$ and the maximal closure of the algebraic limit $\lim_{n\to\infty}\C[X_n]$ is canonically $*$-isomorphic to the direct limit $\lim_{n\to\infty}C^*_{max}(X_n)$, it suffices to prove that the algebraic crossed product $A_{alg}\rtimes_{alg}\Ga$ is $*$-isomorphic to the algebraic direct limit $\lim_{n\to\infty}\C[X_n]$.  This, however, is clear by restricting the standard identification
$$
\C[|\Ga|]\cong l^\infty(\Ga,\mathcal{K})\rtimes_{alg}\Ga
$$
where the algebraic crossed product is taken with respect to the right $\Ga$ action on itself; recall here that `$\C[|\Ga|]$' denotes the algebraic Roe algebra of $\Ga$, as opposed to $\C[\Ga]$, which denotes the group algebra of $\Ga$, i.e.\ $|\Ga|$ denoted the group thought of as a metric space.
\end{proof}  

\begin{alem2}\label{alem2}
The (maximal) coarse assembly maps 
$$
\mu_n:\lim_{R\to\infty}K_*(P_R(X_n)) \to K_*(C^*_{(max)}(X_n))
$$
form a natural directed system.  The direct limit of this system, say
\begin{equation}\label{infassem}
\mu_\infty:\lim_{n\to\infty}\lim_{R\to\infty}K_*(P_R(X_n))\to \lim_{n\to\infty}K_*(C^*_{(max)}(X_n))
\end{equation}
is naturally isomorphic to the (maximal) Baum-Connes assembly map
\begin{equation}\label{coeffassem}
\mu:\lim_{R\to\infty}KK_*^\Gamma(C_0(P_R(X)),A)\to K_*(A\rtimes_{r~(max)}\Gamma).
\end{equation}
\end{alem2}

\begin{proof}
These identifications follow from a slight elaboration of the arguments in \cite{Yu:1995zl} that identify the (maximal) Baum-Connes assembly map for a group $\Ga$ with coefficients in $l^\infty(X,\mathcal{K})$ with the (maximal) coarse assembly map for the space $|\Ga|$. 
\end{proof}

\begin{proof}[Proof of Theorem \ref{bccoeff}]
The coarse Baum-Connes conjecture is naturally functorial under coarse maps.  As the inclusions $X_n\hookrightarrow X_m$ for $m\geq n$ are all coarse equivalences, it follows that the map 
$$
\mu_\infty:\lim_{n\to\infty}\lim_{R\to\infty}K_*(P_R(X_n))\to \lim_{n\to\infty}K_*(C^*_{(max)}(X_n))
$$
from line (\ref{infassem}) above is equivalent to any of the maps 
$$
\mu_n:\lim_{R\to\infty}K_*(P_R(X_n)) \to K_*(C^*_{(max)}(X_n))
$$
and indeed to the (maximal) coarse assembly map 
$$
\mu:\lim_{R\to\infty}K_*(P_R(X))\to K_*(C_{(max)}^*(X))
$$
for $X$ itself (using that $f$ is a coarse equivalence).  Parts (i), (ii) and (iii) of Theorem \ref{bccoeff} are now immediate from Theorems \ref{negthe}, \ref{posthe} and \cite[Theorem 1.1]{Willett:2010zh}, respectively.
\end{proof}

\appendix

\section{The uniform case}\label{unicase}

In this appendix, we collect together the adjustments, and additional ingredients, that are necessary to extend Theorems \ref{negthe} and \ref{posthe} and \cite[Theorem 1.1]{Willett:2010zh} to the case of the uniform coarse assembly map.  

\begin{unialg}\label{unialg}
Let $X$ be a proper metric space, and fix $Z$ a countable dense subset of $X$.  Let $T=(T_{x,y})_{x,y\in Z}$ be the matrix representation of a bounded operator on $l^2(Z,\Hi)$ with respect to the natural basis of $l^2(Z)$, so each $T_{x,y}$ is an element of $\mathcal{B}(\Hi)$.  $T$ is said to be a \emph{uniform operator} if for all $\epsilon>0$ there exists $N\in\N$ such that for all $x,y$ there exists $F_{x,y}\in\mathcal{B}(l^2(Z,\Hi))$ of rank at most $N$ such that $\|T_{x,y}-F_{x,y}\|<\epsilon$, and if for any bounded subset $B\subseteq X$, the set
$$
\{(x,y)\in (B\times B)\cap (Z\times Z)~|~T_{x,y}\neq 0\}
$$
is finite.  The \emph{propagation of $T$} is 
$$
\text{prop}(T):=\inf\{S>0~|~T_{x,y}=0 \text{ for all } x,y\in Z \text{ with } d(x,y)>S\}.
$$
The \emph{algebraic uniform algebra of $X$}, denoted $U\C[X]$, is the $*$-subalgebra of $\mathcal{B}(l^2(Z,\Hi))$ consisting of locally compact, finite propagation operators.   The \emph{uniform algebra of $X$}, denoted $UC^*(X)$, is the closure of $U\C[X]$ inside $\mathcal{B}(l^2(Z,\Hi))$, and the \emph{maximal uniform algebra of $X$}, denoted $UC^*_{max}(X)$, is the completion of $U\C[X]$ for the obvious universal norm (this exists in the bounded geometry case).

Assume now in addition that $X$ is uniformly discrete.  Define $\C_u[X]$ to be the $*$-subalgebra of $l^2(X)$ consisting of finite propagation operators (with the obvious analogue of the definition above).  The \emph{uniform Roe algebra of $X$}, denoted $C^*_u(X)$, is the norm closure of $\C_u[X]$ in $\mathcal{B}(l^2(X))$ and the \emph{maximal uniform Roe algebra}, denoted $C^*_{u,max}(X)$, is the completion of $\C[X]$ for the obvious maximal norm (this again exists in the bounded geometry case).
\end{unialg}

\begin{unimor}[\cite{Willett:2010ca}, Section 4.2]\label{unimor}
There exist canonical Morita equivalences 
$$
UC^*_{(max)}(X)\stackrel{M}{\sim}C^*_{u,(max)}(X)
$$
in both the maximal and non-maximal cases. \qed
\end{unimor}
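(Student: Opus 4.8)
The plan is to identify $UC^*_{(max)}(X)$ with the stabilisation $C^*_{u,(max)}(X)\otimes\mathcal{K}(\Hi)$ and then appeal to the standard fact that a $C^*$-algebra is (canonically) Morita equivalent to its stabilisation. Since $C^*_u(X)$ is only defined when $X$ is uniformly discrete, we may take $Z=X$ throughout, so that $UC^*(X)$ acts on $l^2(X,\Hi)=l^2(X)\otimes\Hi$ while $C^*_u(X)$ acts on $l^2(X)$. Write $\mathcal{K}_f(\Hi)\subseteq\mathcal{K}(\Hi)$ for the (non-closed) $*$-algebra of finite-rank operators. The first and central step is to prove that the obvious $*$-homomorphism
$$
\C_u[X]\odot\mathcal{K}_f(\Hi)\longrightarrow U\C[X]
$$
is injective with dense range. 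Injectivity is immediate; density is exactly where the defining property of a \emph{uniform} operator is used: given $T\in U\C[X]$ of propagation $S$ and $\epsilon>0$, the \emph{uniform} bound $N$ on the ranks of $\epsilon$-approximants to the matrix entries $T_{x,y}$, together with finite propagation and a bounded-geometry bookkeeping argument over the finitely many relative positions in $X$ at distance $\le S$, lets one approximate $T$ in operator norm by a finite sum $\sum_j D_j\otimes k_j$ with $D_j\in\C_u[X]$ and $k_j\in\mathcal{K}_f(\Hi)$.

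Granting this, closing up in the defining representation on $l^2(X)\otimes\Hi$ yields a canonical isomorphism $UC^*(X)\cong C^*_u(X)\otimes\mathcal{K}(\Hi)$ (spatial tensor product). In the maximal case (where bounded geometry is assumed so that the universal norms are finite) one further checks that the maximal completion of $\C_u[X]\odot\mathcal{K}_f(\Hi)$ is $C^*_{u,max}(X)\otimes\mathcal{K}(\Hi)$: since $\C_u[X]$ is unital, a nondegenerate $*$-representation of $\C_u[X]\odot\mathcal{K}_f(\Hi)$ restricts on $1\odot\mathcal{K}_f(\Hi)$ to a multiple of the unique nondegenerate representation of $\mathcal{K}(\Hi)$, whence the representation factors as $\pi_0\otimes\mathrm{id}_{\mathcal{K}(\Hi)}$ for a representation $\pi_0$ of $\C_u[X]$; combined with nuclearity of $\mathcal{K}(\Hi)$ this shows the universal norm on $\C_u[X]\odot\mathcal{K}_f(\Hi)$ agrees with its norm in $C^*_{u,max}(X)\otimes\mathcal{K}(\Hi)$, so $UC^*_{max}(X)\cong C^*_{u,max}(X)\otimes\mathcal{K}(\Hi)$. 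Equivalently and more concretely: fixing a rank-one projection $p\in\mathcal{K}(\Hi)$, the element $e:=1_{l^2(X)}\otimes p$ lies in $U\C[X]$, the compression to $l^2(X)\otimes p\Hi\cong l^2(X)$ identifies the corner $e\,UC^*_{(max)}(X)\,e$ with $C^*_{u,(max)}(X)$, and $e$ is a full projection.

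With the identification in hand, the Morita equivalence is the standard one between $A$ and $A\otimes\mathcal{K}(\Hi)$: the imprimitivity bimodule is the closure of $\C_u[X]\odot(p\Hi)$, equivalently the full right ideal $e\,UC^*_{(max)}(X)$, carrying the left action of $e\,UC^*_{(max)}(X)\,e\cong C^*_{u,(max)}(X)$ and the right action of $UC^*_{(max)}(X)$. This is canonical up to isomorphism: any two rank-one projections $p,p'$ satisfy $1\otimes p'=(1\otimes u)(1\otimes p)(1\otimes u)^*$ for a unitary $u\in\mathcal{B}(\Hi)$, and $\mathrm{Ad}(1\otimes u)$ is an automorphism of $UC^*_{(max)}(X)$ intertwining the corresponding bimodules. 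The maximal case runs identically, the only additional input being the compatibility of universal norms established above.

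I expect the main obstacle to be the density statement of the first step: the definition of a uniform operator only directly supplies a pointwise (in $(x,y)$) finite-rank approximation of the matrix entries, and one has to leverage the uniform rank bound $N$ — controlling all entries simultaneously — together with a partition-of-unity argument in the $X$-variables (using bounded geometry) to obtain a genuine operator-norm approximation of $T$ by a \emph{finite} sum of elementary tensors. Once this spatial tensor decomposition is available, the computation of the corner, the fullness of $e$, and the passage to the Morita equivalence (and its naturality) are routine $C^*$-algebra.
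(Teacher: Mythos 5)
The central step of your argument---that $\C_u[X]\odot\mathcal{K}_f(\Hi)$ has dense image in $U\C[X]$, so that $UC^*_{(max)}(X)\cong C^*_{u,(max)}(X)\otimes\mathcal{K}(\Hi)$---is false, and the obstacle you flag in your last paragraph is fatal rather than technical. Choose pairwise orthogonal rank-one projections $(p_x)_{x\in X}$ on $\Hi$, let $\xi_x$ be a unit vector spanning the range of $p_x$, and let $T$ be the propagation-zero operator with matrix entries $T_{x,x}=p_x$ and $T_{x,y}=0$ for $x\neq y$. Then $T\in U\C[X]$ (the uniformity condition holds with $N=1$ for every $\epsilon$), but for any finite sum $S=\sum_{j=1}^m D_j\otimes k_j$ with $D_j\in C^*_u(X)$ and $k_j\in\mathcal{K}(\Hi)$ one has $\|T-S\|\geq\|(T-S)(\delta_x\otimes\xi_x)\|\geq 1-\sum_j\|D_j\|\,\|k_j\xi_x\|$; since the $\xi_x$ form an orthonormal family and each $k_j$ is compact, $\|k_j\xi_x\|\to 0$, so $\mathrm{dist}\bigl(T,\,C^*_u(X)\otimes\mathcal{K}(\Hi)\bigr)\geq 1$. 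The point is that the uniform bound $N$ controls the \emph{rank} of each matrix entry but says nothing about \emph{where in $\Hi$} those finite-rank pieces live, whereas every entry of a finite sum of elementary tensors is confined to a single finite-dimensional subspace of $\mathcal{K}(\Hi)$. No partition-of-unity bookkeeping in the $X$-variable can repair this, and this is precisely why the proposition asserts a Morita equivalence rather than an isomorphism with the stabilisation.

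The parenthetical ``more concrete'' version you offer is, however, the correct statement, and it is \emph{not} equivalent to the tensor factorisation: with $p\in\mathcal{K}(\Hi)$ a rank-one projection, $e:=1\otimes p$ is a projection in $U\C[X]$ whose corner $e\,UC^*_{(max)}(X)\,e$ is canonically $C^*_{u,(max)}(X)$, and the Morita equivalence follows once $e$ is shown to be \emph{full}. Fullness is where the real content lies and requires a direct argument: decompose a finite-propagation $T$ into boundedly many partial translations (bounded geometry), replace each entry by a rank-$\leq N$ approximant, write each rank-one piece $\eta\zeta^*$ as $(\eta\xi_0^*)\,p\,(\zeta\xi_0^*)^*$ where $\xi_0$ spans $p\Hi$, and check that the resulting ``column'' operators assemble into elements of $U\C[X]$ of controlled norm, exhibiting $T$ as a limit of sums $\sum_i A_ieB_i$. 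This explicit bimodule of uniform column operators from $l^2(X)$ to $l^2(X,\Hi)$ is how the equivalence is obtained in \cite{Willett:2010ca} (the present paper gives no proof, only the citation). Note also that your treatment of the maximal case leans on factoring representations of $\C_u[X]\odot\mathcal{K}_f(\Hi)$, which is moot once that algebra is seen not to be dense; in the maximal case one must instead verify the corner and fullness statements directly for the universal completions.
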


Recall that \v{S}pakula \cite{Spakula:2009tg} has defined the \emph{uniform $K$-homology} groups of a topological space $X$, denoted $K_*^u(X)$, and uniform coarse assembly maps
$$
\mu:\lim_{R\to\infty}K_*^u(P_R(X))\to K_*(UC^*(X));
$$
one may similarly define a maximal uniform coarse assembly map
$$
\mu:\lim_{R\to\infty}K_*^u(P_R(X))\to K_*(UC^*_{max}(X)).
$$
Using Proposition \ref{unimor} above and the main result of \cite{Exel:1993pt}, the image of either of these assembly maps can equally be taken in $K_*(C^*_{u,(max)}(X))$, which is in fact closer to \v{S}pakula's treatment in \cite{Spakula:2009tg}).  One can now insist on uniformly finite rank for all of the various versions of the Roe algebras in this piece, and follow through all of the arguments above (using the stability of the algebra $UC^*_{(max)}$ and the Morita equivalence above to get around the fact that $C^*_{u,max}(X)$ is not stable).  One place where one must be somewhat careful is in the definition of the localization algebra (cf.\ \cite[Section 4]{Willett:2010zh}): here one starts with functions $f:[0,\infty)\to U\C[X]$ with propagation tending to zero, but \emph{does not} demand that the rank of approximants remain bounded across all $f(t)$; this is necessary for the Eilenberg swindle type arguments from \cite[Section 5]{Willett:2010zh} to go through.  Another place where some care is required is showing that the Dirac-dual-Dirac argument carries through in this context; this was carried out by \v{S}pakula and the first author in \cite[Section 4]{Willett:2010ca}.

As a result, we have the following theorem; it admits various generalizations and modifications as mentioned in the sections above and in \cite[Remark 3.1]{Willett:2010zh}, but we focus here on the main statements.  The third part uses the techniques of the second paper in this series \cite{Willett:2010zh}.

\begin{unitheo}\label{unitheo}
Let $X=\sqcup G_n$ be a bounded geometry space of graphs with large girth. Then:
\begin{enumerate}[(i)]
\item if $X$ is an expander, then the uniform coarse assembly map for $X$ is not surjective;
\item the uniform coarse assembly map for $X$ is injective;
\item the maximal uniform coarse assembly map for $X$ is an isomorphism. \qed
\end{enumerate}
\end{unitheo}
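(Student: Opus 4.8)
\medskip

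The plan is to run the proofs of Theorems \ref{negthe} and \ref{posthe}, and of \cite[Theorem 1.1]{Willett:2010zh}, replacing each flavour of the Roe algebra $C^*(X)$ (respectively $\C[X]$, $C^*_{max}(X)$, $C^*(\wgn)^{\Ga_n}$, and so on) by its uniform counterpart $UC^*(X)$ (respectively $U\C[X]$, $UC^*_{max}(X)$, $UC^*(\wgn)^{\Ga_n}$), and each $K$-homology group by \v{S}pakula's uniform $K$-homology $K^u_*$. First I would check that the background machinery of Sections \ref{algsec}--\ref{asssec} survives this translation: the $*$-homomorphism $\phi$ of Lemmas \ref{philem} and \ref{onllem} is defined verbatim on $U\C[X]$, since the cut-down operators $\widetilde{T^{(n)}}$ produced there inherit the uniform-finite-rank and finite-propagation structure of $T$; and the uniform operator norm localization property for the tree covers $(\wgn)_{n\in\N}$ still holds, because the proof of Lemma \ref{treelem} uses only asymptotic dimension one, which is insensitive to whether one works on $l^2(Z)$ or $l^2(Z,\Hi)$. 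Throughout I would use the Morita equivalence of Proposition \ref{unimor} to pass between $UC^*_{(max)}(X)$, which is stable, and $C^*_{u,(max)}(X)$, which need not be; this stability is what keeps the stabilization, Morita and swindle steps valid, and it is why one works with $UC^*_{(max)}$ rather than $C^*_{u,(max)}$ at every such point.

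For part (i), observe that the basic Kazhdan projection $p=\lim_{t\to\infty}e^{-t\Delta}$ of Example \ref{ghostex}(i) already lies in $UC^*(X)$: its matrix entries are the scalars $1/|G_n|$ times a fixed rank-one projection $q\in\mathcal{K}$, so it is a norm limit of finite-propagation uniform-finite-rank operators, and it is a non-compact ghost projection. I would then set up the evident uniform analogue of the detecting homomorphism $d_*$ of line \eqref{detect}, for which $d_*[p]=[1,1,1,\ldots]\neq 0$, and repeat the argument of Lemma \ref{atlem}. This requires the uniform version of the abstract Atiyah-$\Ga$-index theorem (Theorem \ref{atiyahthe}), whose proof adapts without change: a class in uniform $K$-homology is again representable by an operator $F$ of arbitrarily small propagation with $1-F^*F$ and $1-FF^*$ of trace class, so the lifting map $\phi^L$ and the resulting operator-trace identities carry over. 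Since Lemmas \ref{ghostlem} and \ref{kazdies} translate directly, $T(\phi_*[p])=0$ while $d_*[p]\neq 0$, so $[p]$ is not in the image of the uniform coarse assembly map, exactly as in Theorem \ref{negthe} and Proposition \ref{nonbg}.

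For part (ii), I would reproduce the commutative diagram \eqref{asymcom} with $K^u_*$ in place of $K_*$ and $UC^*$ in place of $C^*$ everywhere. Exactness of the left-hand column, and the facts that the top horizontal map becomes an isomorphism and the top right vertical map an injection as $R\to\infty$, follow from the same Mayer--Vietoris and limit arguments of \cite{Guentner:2008gd}, now run in uniform $K$-homology, together with the large-girth hypothesis (which, by Example \ref{cfex}(i), makes $\widetilde{X}=(\wgn)$ asymptotically faithful). Injectivity of the bottom horizontal map reduces, via degeneracy of the six-term sequence and stability of the $UC^*(\wgn)^{\Ga_n}$, to injectivity of the equivariant uniform assembly maps $\mu_{\Ga_n}\colon K^{u,\Ga_n}_*(P_R(\wgn))\to K_*(UC^*(\wgn)^{\Ga_n})$; since $\wgn$ is a tree, $\Ga_n$ is free and $P_R(\wgn)$ is a cocompact model for $\underline{E}\Ga_n$, this is the uniform Baum--Connes assembly map for the free group $\Ga_n$, whose injectivity is the uniform Dirac--dual--Dirac argument of \v{S}pakula and the first author in \cite[Section 4]{Willett:2010ca}. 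A diagram chase identical to the one following \eqref{asymcom} then gives injectivity of the uniform coarse assembly map for $X$.

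For part (iii), I would import the isomorphism theorem for the maximal coarse assembly map from \cite[Theorem 1.1]{Willett:2010zh} --- proved there through the localization algebra and a controlled $K$-theory / Mayer--Vietoris scheme --- and carry out the uniform version with the modifications flagged in this appendix: in the uniform localization algebra one allows the ranks of the finite-rank approximants to $f(t)$ to grow in $t$, which is precisely what keeps the Eilenberg-swindle steps of \cite[Section 5]{Willett:2010zh} valid. I expect the main obstacle to be exactly this bookkeeping: one must carry the uniform-finite-rank condition, the propagation estimates, and the maximal-norm estimates simultaneously through the entire cutting-and-pasting argument, invoke stability of $UC^*_{max}$ (not $C^*_{u,max}$, which is not stable) at each stabilization or Morita step, and check that the bounded-geometry hypothesis on $X=\sqcup G_n$ --- needed to define $UC^*_{max}(X)$ in the first place --- is preserved under passage to the Rips complexes $P_R(X)$ and to the tree covers $\wgn$. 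Granting these verifications, parts (i), (ii) and (iii) follow.
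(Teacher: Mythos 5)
Your proposal follows exactly the route the paper itself takes in Appendix \ref{unicase}: replace every Roe-type algebra by its uniform counterpart, use the Morita equivalence of Proposition \ref{unimor} and the stability of $UC^*_{(max)}$ to handle the non-stable $C^*_{u,(max)}$, relax the rank bound in the localization algebra so the Eilenberg swindles survive, and appeal to the uniform Dirac--dual--Dirac argument of \cite[Section 4]{Willett:2010ca} for the free groups $\Ga_n$. The paper gives only this same sketch (it explicitly declines to write out full details), and your account is consistent with it, flagging the identical care points.
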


The following corollary says that our results on the Baum-Connes conjecture for Gromov monsters can be made to hold with \emph{commutative} coefficients.  See \cite[Section 10]{Spakula:2009tg} for the connection between the uniform coarse assembly map for the metric space $|\Ga|$ underlying a group $\Ga$ and the Baum-Connes assembly map for $\Ga$ with coefficients in $l^\infty(\Ga)$.

\begin{unicor}\label{unicor}
Let $\Ga$ be a Gromov monster group as in Definition \ref{gm} above, and let $A_u$ be as in Definition \ref{adef}, but with the scalars $\C$ replacing the compact operators $\mathcal{K}$ (so in particular, $A_u$ is a commutative $C^*$-algebra).  Then the Baum-Connes assembly map for $\Ga$ with coefficients in $A_u$ is injective but not surjective, and the maximal Baum-Connes assembly map for $\Ga$ with coefficients in $A_u$ is an isomorphism.   \qed
\end{unicor}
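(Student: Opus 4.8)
The plan is to transcribe the proof of Theorem \ref{bccoeff} into the uniform setting. Fix a Gromov monster $\Ga$ together with a coarse embedding $f:X\to\Ga$ of an expander $X=\sqcup G_n$ with large girth, and let $X_n=\{g\in\Ga\mid d(g,f(X))\leq n\}$ and $A_u=\lim_n l^\infty(X_n)\subseteq l^\infty(\Ga)$ be as in Definition \ref{adef} with $\C$ in place of $\mathcal{K}$; the right translation action of $\Ga$ on $l^\infty(\Ga)$ preserves $A_u$ exactly as before. Note that $X$ (and each $X_n$) has bounded geometry, being coarsely embedded in the bounded geometry space $|\Ga|$ and, in the case of $X$, an expander in the sense of Definition \ref{exdef}; so Theorem \ref{unitheo} applies to $X$.

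First I would prove the uniform analogue of Lemma \ref{alem}, namely
$$
A_u\rtimes_r\Ga\cong\lim_{n\to\infty}C^*_u(X_n),\qquad A_u\rtimes_{max}\Ga\cong\lim_{n\to\infty}C^*_{u,max}(X_n),
$$
by restricting the standard identification $\C_u[|\Ga|]\cong l^\infty(\Ga)\rtimes_{alg}\Ga$ (for the right translation action) to the $*$-subalgebra generated by the $l^\infty(X_n)$ and then completing (the maximal case using that the universal completion of an algebraic direct limit is the direct limit of the universal completions). Next I would prove the uniform analogue of Lemma \ref{alem2}: the uniform (maximal) coarse assembly maps $\mu_n\colon\lim_R K^u_*(P_R(X_n))\to K_*(C^*_{u,(max)}(X_n))$ form a directed system whose direct limit identifies naturally with the (maximal) Baum--Connes assembly map for $\Ga$ with coefficients in $A_u$. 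This uses \v{S}pakula's identification of the uniform coarse assembly map for $|\Ga|$ with the assembly map for $\Ga$ with coefficients in $l^\infty(\Ga)$ \cite[Section 10]{Spakula:2009tg}, together with the direct-limit argument of \cite{Yu:1995zl} and Proposition \ref{unimor} (plus the main result of \cite{Exel:1993pt}) to move freely between $UC^*_{(max)}(X_n)$ and $C^*_{u,(max)}(X_n)$. Since $f$ is a coarse embedding and each $X_n$ is a neighbourhood of $f(X)$, the inclusions $X_n\hookrightarrow X_m\hookrightarrow X$ are coarse equivalences, so coarse invariance of the uniform (maximal) coarse assembly map identifies this direct limit with the uniform (maximal) coarse assembly map for $X$ itself. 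Parts (i), (ii) and (iii) of Theorem \ref{unitheo} then yield, respectively, non-surjectivity in the reduced case, injectivity in the reduced case, and isomorphism in the maximal case.

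The main obstacle will be the uniform version of Lemma \ref{alem2}: one has to redo the argument of \cite{Yu:1995zl} inside \v{S}pakula's framework for uniform $K$-homology, checking that the uniform coarse assembly map is functorial under coarse maps, behaves well under the direct limits in play, and is compatible with the Morita equivalences of Proposition \ref{unimor}. Here one must also respect the subtleties in the uniform setting already flagged in this appendix --- the treatment of rank bounds, the definition of the uniform localization algebra, and the Dirac--dual-Dirac argument of \cite[Section 4]{Willett:2010ca}. Granting these, the rest is a routine transcription of Section \ref{bcsec}.
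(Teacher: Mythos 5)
Your proposal is correct and follows exactly the route the paper intends: the corollary is stated with its proof omitted precisely because it is the transcription of Section \ref{bcsec} (Lemmas \ref{alem} and \ref{alem2} and the proof of Theorem \ref{bccoeff}) into the uniform setting, with Theorem \ref{unitheo} replacing Theorems \ref{negthe} and \ref{posthe} and with \cite[Section 10]{Spakula:2009tg} supplying the identification of the uniform coarse assembly map for $|\Ga|$ with the Baum--Connes assembly map with coefficients in $l^\infty(\Ga)$. The points you flag as requiring care (functoriality and direct limits in uniform $K$-homology, the Morita equivalence of Proposition \ref{unimor}) are exactly the ones the appendix identifies, so no gap.
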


\bibliography{Generalbib}

\end{document}